\documentclass{arxiv}
\usepackage{amsmath}
\usepackage{amssymb}
\usepackage{amsfonts}
\usepackage{amstext}
\usepackage{amsopn}
\usepackage{amsxtra}
\usepackage{graphicx}
\usepackage[latin1]{inputenc}
\usepackage{dsfont}
\usepackage{mathrsfs}
\usepackage{multirow}
%
%
\newcommand\R{{\ensuremath {\mathbb R} }}
\newcommand\C{{\ensuremath {\mathbb C} }}

\newcommand\Z{{\ensuremath {\mathbb Z} }}

\newcommand\1{{\ensuremath {\mathds 1} }}
\renewcommand\phi{\varphi}
\newcommand{\alp}{\boldsymbol{\alpha}}
\newcommand{\gH}{\mathfrak{H}}

\newcommand{\gS}{\mathfrak{S}}
\newcommand{\wto}{\rightharpoonup}
\renewcommand{\to}{\rightarrow}
\newcommand{\cP}{\mathcal{P}}

\newcommand{\cD}{\mathcal{D}}

\newcommand{\cL}{\mathcal{L}}

\newcommand{\tr}{{\rm tr}\,}

\newcommand{\cC}{\mathcal{C}}

\newcommand{\Spu}{{\rm Spu}}
\newcommand{\Conv}{{\rm Conv}}
\newcommand{\curlL}{\mathscr{L}}
\newcommand{\curlB}{\mathscr{B}}
\newcommand{\curlC}{\mathscr{C}}
\newcommand{\LKB}{L_{KB}}
\newcommand{\LAB}{L_{AB}}

\newcommand{\LDKB}{L_{DKB}}
\newcommand\ii{{\ensuremath {\infty}}}
\newcommand\pscal[1]{{\ensuremath{\left\langle #1 \right\rangle}}}
\newcommand{\norm}[1]{ \left| \! \left| #1 \right| \! \right| }

\renewcommand{\tr}{{\rm Tr} }

\begin{document}
\date{December 11, 2008}
\title{\Large Spectral Pollution and How to Avoid It\\ \small (With Applications to Dirac and Periodic Schrödinger Operators)}
\authormark{Mathieu LEWIN \& \'Eric S\'ER\'E}
\runningtitle{Spectral Pollution and How to Avoid It}

\author{Mathieu LEWIN}
\address{CNRS and Laboratoire de Mathématiques (CNRS UMR 8088), Universit{\'e} de Cergy-Pontoise, 2, avenue Adolphe Chauvin, 95 302 Cergy-Pontoise Cedex - France.\\ Email: \email{Mathieu.Lewin@math.cnrs.fr}}

\author{\'Eric S\'ER\'E}
\address{Ceremade (CNRS UMR 7534), Universit{\'e} Paris-Dauphine, Place
du Mar{\'e}chal de Lattre de Tassigny, 75775 Paris Cedex 16 - France.\\ Email: \email{sere@ceremade.dauphine.fr}}

\maketitle

\bigskip

\begin{abstract}
This paper, devoted to the study of spectral pollution, contains both abstract results and applications to some self-adjoint operators with a gap in their essential spectrum occuring in Quantum Mechanics. 

First we consider Galerkin basis which respect the decomposition of the ambient Hilbert space into a direct sum $\gH=P\gH\oplus(1-P)\gH$, given by a fixed orthogonal projector $P$, and we localize the polluted spectrum exactly. This is followed by applications to periodic Schrödinger operators (pollution is absent in a Wannier-type basis), and to Dirac operator (several natural decompositions are considered).

In the second part, we add the constraint that within the Galerkin basis there is a certain relation between vectors in $P\gH$ and vectors in $(1-P)\gH$. Abstract results are proved and applied to several practical methods like the famous \emph{kinetic balance} of relativistic Quantum Mechanics.

\smallskip

\noindent{\scriptsize\copyright~2008 by the authors. This paper may be reproduced, in its entirety, for non-commercial~purposes.}
\end{abstract}

\tableofcontents

\section*{Introduction}\addcontentsline{toc}{section}{Introduction}
This paper is devoted to the study of \emph{spectral pollution}. This phenomenon of high interest occurs when one approximates the spectrum of a (bounded or unbounded) self-adjoint operator $A$ on an infinite-dimensional Hilbert space $\gH$, using a sequence of finite-dimensional spaces. Consider for instance a sequence $\{V_n\}$ of subspaces of the domain $D(A)$ of $A$ such that $V_n\subset V_{n+1}$ and $P_{V_n}\to1$ strongly (we denote by $P_{V_n}$ the orthogonal projector on $V_n$). Define the $n\times n$ matrices $A_n:=P_{V_n}AP_{V_n}$. It is well-known that such a Galerkin method may in general lead to \emph{spurious eigenvalues}, i.e. numbers $\lambda\in\R$ which are limiting points of eigenvalues of $A_n$ but do not belong to $\sigma(A)$. This phenomenon is known to occur in gaps of the essential spectrum of $A$ only.

Spectral pollution is an important issue which arises in many different practical situations. It is encountered when approximating the spectrum of perturbations of periodic Schrödinger operators \cite{BouLev-07} or Strum-Liouville operators \cite{StoWei-93,StoWei-95,AceGheMar-06}. It is a very well reported difficulty in Quantum Chemistry and Physics in particular regarding relativistic computations \cite{DraGol-81,Grant-82,Kutzelnigg-84,StaHav-84,DyaFae-90,Pestka-03,Shaetal-04}. It also appears in elasticity, electromagnetism and hydrodynamics; see, e.g. the references in \cite{Bou-07}. Eventually, it has raised as well a huge interest in the mathematical community, see, e.g., \cite{LevSha-04,DavPlu-04,BouLev-07,Hansen-08,Descloux-81,Pokrzywa-79,Pokrzywa-81}.

In this article we will study spectral pollution from a rather new perspective. Although many works focus on how to determine if an approximate eigenvalue is spurious or not (see, e.g., the rather successful second-order projection method \cite{LevSha-04,BouLev-07}), we will on the contrary concentrate on finding conditions on the sequence $\{V_n\}$ which ensure that there will not be any pollution at all, in a given interval of the real line.

Our work contains two rather different aspects. On the one hand we will establish some theoretical results for abstract self-adjoint operators: we characterize exactly (or partially) the polluted spectrum under some specific assumptions on the approximation scheme as will be explained below. On the other hand we apply these results to two important cases of Quantum Physics: perturbations of periodic Schrödinger operators and Dirac operators. 
For Dirac operators, we will show in particular that some very well-known methods used by Chemists or Physicists  indeed allow to partially avoid spurious eigenvalues in certain situations, or at the contrary that they are theoretically of no effect in other cases.

\bigskip

Let us now summarize our results with some more details. 

Our approach consists in adding some assumptions on the approximating scheme. We start by considering in Section \ref{sec:splitting} a fixed orthogonal projector $P$ acting on the ambiant Hilbert space $\gH$ and we define $P$-spurious eigenvalues $\lambda$ as limiting points obtained by a Galerkin-type procedure, in a basis which respects the decomposition associated with $P$. This means $\lambda=\lim_{n\to\ii}\lambda_n$ with  $\lambda\notin\sigma(A)$ and $\lambda_n\in\sigma(P_{V_n}AP_{V_n})$, where $V_n=V_n^+\oplus V_n^-$ for some $V_n^+\subset \gH^+:=P\gH$ and $V_n^-\subset \gH^-:=(1-P)\gH$. We show that, contrarily to the general case and depending on $P$, there might exist an interval in $\R$ in which there is never any pollution occuring. More precisely, we exactly determine the location of the polluted spectrum in Section \ref{sec:splitting_general} and we use this in Section \ref{sec:splitting_condition} to derive a simple criterion on $P$, allowing to completely avoid the appearence of spurious eigenvalues in a gap of the essential spectrum of $A$.

Then we apply our general result to several practical situations in Section \ref{sec:applications_splitting}. We in particular show that the usual decomposition into upper and lower spinors \emph{a priori} always leads to pollution for Dirac operators. We also study another  decomposition of the ambient Hilbert space which was proposed by Shabaev et al \cite{Shaetal-04} and we prove that the set which is free from spectral pollution is larger than the one obtained from the simple decomposition into upper and lower spinors. Eventually, we prove that choosing the decomposition given by the spectral projectors of the free Dirac operator is completely free of pollution. For the convenience of the reader, we have summarized all these results in Table \ref{table:summary_splitting}. 

As another application we consider in Section \ref{sec:Periodic} the case of a periodic Schrödinger operator which is perturbed by a potential which vanishes at infinity. We prove again that choosing a decomposition associated with the unperturbed (periodic) Hamiltonian allows to avoid spectral pollution, as was already demonstrated numerically in \cite{CanDelLew-08b} using Wannier functions.

\begin{table}[t]
\centering
\begin{tabular}{|c|c|c|}
\hline
\textbf{Imposed splitting} & \multirow{2}{*}{\textbf{External potential $V$}} & \textbf{Spurious spectrum}\\
 \textbf{of Hilbert space} &  & \textbf{in the gap $(-1,1)$}\\
\hline\hline
\textit{none} & any & $(-1,1)$\\
\hline
\textit{upper/lower spinors} & $V=0$ & $\emptyset$\\
\cline{2-3}
\multirow{3}{*}{$\left(\begin{array}{c}
\phi_n\\ 0\end{array}\right),\ \left(\begin{array}{c}
0\\ \chi_n\end{array}\right)$} & \multirow{2}{*}{$V$ bounded} & $(-1,-1+\sup(V)]\qquad\qquad$\\
 & & $\qquad\qquad\cup[1+\inf(V),1)$\\
\cline{2-3}
 & unbounded (ex: Coulomb) & $(-1,1)$\\
\hline
\textit{dual decomposition \cite{Shaetal-04}} & $V=0$ & $\emptyset$\\
\cline{2-3}
\multirow{2}{*}{$\left(\begin{array}{c}
\phi_n\\ \epsilon\sigma\cdot p\,\phi_n\end{array}\right),\; \left(\begin{array}{c}
-\epsilon\sigma\cdot p\,\chi_n\\ \chi_n\end{array}\right)$} & \multirow{2}{*}{$V$ bounded} & $\displaystyle(-1,-{2}/{\epsilon}+1+\sup(V)]\qquad$\\
 & & $\displaystyle\qquad\cup[{2}/{\epsilon}-1+\inf(V),1)$\\
\cline{2-3}
$0<\epsilon\leq1$ & unbounded (ex: Coulomb) & $(-1,1)$\\
\hline
\textit{free decomposition} & \multirow{2}{*}{any} & \multirow{2}{*}{$\emptyset$}\\
$P^0_+\Psi_n,\ P^0_-\Psi'_n$ & & \\
\hline
\end{tabular}
\caption{Summary of our results from Section \ref{sec:applications_splitting} for the Dirac operator $D^0+V$, when a splitting is imposed on the Hilbert space $L^2(\R^3,\C^4)$.}
\label{table:summary_splitting}
\end{table}

In Section \ref{sec:balanced}, we come back to the theory of a general operator $A$ and we study another method inspired by the ones used in quantum Physics and Chemistry. Namely, additionaly to a splitting as explained before, we add the requirement that there is a specific relation (named \emph{balance condition}) between the vectors of $\gH^-$ and that of $\gH^+$. This amounts to choosing a fixed operator $L:\gH^+\to\gH^-$ and taking as approximation spaces $V_n=V_n^+\oplus LV_n^+$. We do not completely characterize theoretically the possible spurious eigenvalues for this kind of methods but we give necessary and sufficient conditions which are enough to fully understand the case of the Dirac operator. In Quantum Chemistry and Physics the main method is the so-called \emph{kinetic balance} which consists in choosing $L=\sigma(-i\nabla)$ and the decomposition into upper and lower spinors. We show in Section \ref{sec:kinetic_balance} that this method allows to avoid spectral pollution in the upper part of the spectrum only for bounded potentials and that it does not help for unbounded functions like the Coulomb potential. We prove in Section \ref{sec:atomic_balance} that the so-called (more complicated) \emph{atomic balance} indeed allows to solve this problem also for Coulomb potentials, as was already suspected in the literature. Eventually, we show that the \emph{dual kinetic balance} method of \cite{Shaetal-04} is not better than the one which is obtained by imposing a splitting without \emph{a priori} adding a balance condition. Our results for balanced methods for Dirac operators are summarized in Table \ref{table:summary_balance}.

\begin{table}[t]
\centering
\begin{tabular}{|c|c|c|}
\hline
\textbf{Balance} & \multirow{2}{*}{\textbf{External potential $V$}} & \textbf{Spurious spectrum}\\
 \textbf{condition} &  & \textbf{in the gap $(-1,1)$}\\
\hline\hline
\textit{kinetic balance} & $V$ bounded with & \multirow{2}{*}{$(-1,-1+\sup(V)]$}\\
\multirow{3}{*}{$\left(\begin{array}{c}
\phi_n\\ 0\end{array}\right),\ \left(\begin{array}{c}
0\\ \sigma\cdot p\,\phi_n\end{array}\right)$} & $-1+\sup(V)<1+\inf(V)$ & \\
\cline{2-3}
 & $V(x)=-\frac{\kappa}{|x|}$, & \multirow{2}{*}{$(-1,1)$}\\
&  ${0<\kappa<\sqrt{3}/2}$ & \\
\hline
\textit{atomic balance} & $V$ such that & \multirow{4}{*}{$(-1,-1+\sup(V)]$}\\
\multirow{3}{*}{$\left(\begin{array}{c}
\phi_n\\ 0\end{array}\right),\ \left(\begin{array}{c}
0\\ \frac{1}{2-V}\sigma\cdot p\,\phi_n\end{array}\right)$} &$-\frac{\kappa}{|x|}\leq V(x)$ where & \\
 & $0\leq\kappa<\sqrt{3}/2$, & \\
&  and $\sup(V)<2$ & \\
\hline
\textit{dual kinetic balance} \cite{Shaetal-04} & \multirow{2}{*}{$V$ bounded} &  $\displaystyle(-1,-{2}/{\epsilon}+1+\sup(V)]\qquad$\\
\multirow{2}{*}{$\left(\begin{array}{c}
\phi_n\\ \epsilon\sigma\cdot p\,\phi_n\end{array}\right),\; \left(\begin{array}{c}
-\epsilon\sigma\cdot p\,\phi_n\\ \phi_n\end{array}\right)$} & &$\displaystyle\qquad\cup[{2}/{\epsilon}-1+\inf(V),1)$\\
\cline{2-3}
 & unbounded (ex: Coulomb) & $(-1,1)$\\
\hline
\end{tabular}
\caption{Summary of our results for the Dirac operator $D^0+V$ when a balance is imposed between vectors of the basis.}
\label{table:summary_balance}
\end{table}

We have tried to make our results sufficiently general that they could be applied to other situations in which there is a natural way (in the numerical sense) to split the ambiant Hilbert space in a direct sum $\gH=\gH^+\oplus\gH^-$. We hope that our results will provide some new insight on the spectral pollution issue.

\bigskip\noindent{\it Acknowledgements.} {\small The authors would like to thank Lyonell Boulton and Nabile Boussaid for interesting discussions and comments. The authors have been supported by the ANR project \emph{ACCQuaRel} of the french ministry of research.}

\section{Spectral pollution}
In this first section, we recall the definition of \emph{spectral pollution} and give some properties which will be used in the rest of the paper. Most of the material of this section is rather well-known \cite{Descloux-81,Shaetal-04,LevSha-04,DavPlu-04}.

In the whole paper we consider a self-adjoint operator $A$ acting on a separable Hilbert space $\gH$, with dense domain $D(A)$. 

\paragraph{Notation.}
For any finite-dimensional subspace $V\subset D(A)$, we denote by $P_V$ the orthogonal projector onto $V$ and by $A_{|V}$ the self-adjoint operator $V\to V$ which is just the restriction to $V$ of $P_VAP_V$. 

As $A$ is by assumption a self-adjoint operator, it is closed, i.e. the graph $G(A)\subset D(A)\times\gH$ is closed. This induces a norm $\norm{\cdot}_{D(A)}$ on $D(A)$ for which $D(A)$ is closed. For any $K\subset D(A)$, we will use the notation $\overline{K}^{D(A)}$ to denote the closure of $K$ for the norm associated with the graph of $A$, in $D(A)$. On the other hand we simply denote by $\overline{K}$ the closure for the norm of the ambient space $\gH$.

We use like in \cite{LevSha-04} the notation $\hat{\sigma}_{\rm ess}(A)$ to denote the essential spectrum of $A$ union $-\ii$ (and/or $+\ii$) if there exists a sequence of $\sigma(A)\ni\lambda_n\to-\ii$ (and/or $+\ii$). Finally, we denote by $\Conv(X)$ the convex hull of any set $X\subset\R$ and we use the convention that $[c,d]=\emptyset$ if $d<c$.

\begin{definition}[Spurious eigenvalues]\label{def:spurious}\it 
We say that $\lambda\in\R$ is a \emph{spurious eigenvalue} of the operator $A$ if there exists a sequence of finite dimensional spaces $\{V_n\}_{n\geq1}$ with $V_n\subset D(A)$ and $V_n\subset V_{n+1}$ for any $n$, such that 
\begin{enumerate}
\item[(i)] $\overline{\cup_{n\geq1}V_n}^{D(A)}=D(A)$;
\item[(ii)] $\displaystyle\lim_{n\to\ii}{\rm dist}\left(\lambda\,,\,\sigma(A_{|V_n})\right)=0$;
\item[(iii)] $\lambda\notin\sigma(A)$.
\end{enumerate}
We denote by $\Spu(A)$ the set of spurious eigenvalues of $A$.
\end{definition}

If needed, we shall say that \emph{$\lambda$ is a spurious eigenvalue of $A$ with respect to $\{V_n\}$} to further indicate a sequence $\{V_n\}$ for which the above properties hold true.
Note that (i) in Definition \ref{def:spurious} implies in particular that we have
$\overline{\cup_{n\geq1}V_n}=\gH$
since $D(A)$ is dense in $\gH$ by assumption.

\begin{remark}\label{rmk:domain}\it
As the matrix of $A$ in a finite-dimensional space only involves the quadratic form associated with $A$, it is possible to define spurious eigenvalues by assuming only that $V_n$ is contained in the form domain of $A$. Generalizing our results to quadratic forms formalism is certainly technical, although being actually useful in some cases (Finite Element Methods are usually expressed in this formalism). We shall only consider the simpler case for which $V_n\subset D(A)$ for convenience.
\end{remark}

\begin{remark}\label{rmk:compact-perturb}\it
If $\lambda$ is a spurious eigenvalue of $A$ with respect to $\{V_n\}$ and if $B-A$ is compact, then $\lambda$ is either a spurious eigenvalue of $B$ in $\{V_n\}$ or $\lambda\in\sigma_{\rm disc}(B)$. One may think that the same holds when $B-A$ is only $A$-compact, but this is actually \underline{not true}, as we shall illustrate below in Remark \ref{rmk:contre-exemple}.
\end{remark}

\begin{remark}\it
In this paper we concentrate our efforts on the spectral pollution issue, and we do not study how well the spectrum $\sigma(A)$ of $A$ is approximated by the discretized spectra $\sigma(A_{|V_n})$. Let us only mention that for every $\lambda\in\sigma(A)$, we have ${\rm dist}(\lambda,\sigma(A_{|V_n}))\to0$ as $n\to\ii$, provided that $\overline{\cup_{n\geq1}V_n}^{D(A)}=D(A)$ as required in Definition \ref{def:spurious}.
\end{remark}

The following lemma will be very useful in the sequel.
\begin{lemma}[Weyl sequences]\label{lem:Weyl} Assume that $\lambda$ is a spurious eigenvalue of $A$ in $\{V_n\}$ as above. Then there exists a sequence $\{x_n \}_{n\geq1}\subset D(A)$ with $x_n\in V_n$ for any $n\geq1$, such that
\begin{enumerate}
\item $P_{V_n}(A-\lambda)x_n\to 0$ strongly in $\gH$;
\item $\norm{x_n}=1$ for all $n\geq1$;
\item $x_n\wto0$ weakly in $\gH$.
\end{enumerate}
\end{lemma}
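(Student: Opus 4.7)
The plan is to take $x_n$ to be a suitably chosen unit eigenvector of the finite-dimensional self-adjoint operator $A_{|V_n}$ and then to verify the three properties in order. By hypothesis there exists $\lambda_n\in\sigma(A_{|V_n})$ with $\lambda_n\to\lambda$, and since $A_{|V_n}$ acts on the finite-dimensional space $V_n$, one can pick a unit vector $x_n\in V_n$ with $A_{|V_n}x_n=\lambda_n x_n$. Because $x_n\in V_n$ we have $P_{V_n}x_n=x_n$, hence
\[
P_{V_n}(A-\lambda)x_n \;=\; P_{V_n}AP_{V_n}x_n-\lambda x_n \;=\; (\lambda_n-\lambda)x_n,
\]
which gives property (1) immediately and trivially property (2). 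The only genuine content lies in proving (3).

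For the weak convergence, the strategy is to show that every weak cluster point of $\{x_n\}$ is zero; since $\{x_n\}$ is bounded, this forces $x_n\wto 0$ along the full sequence. So I extract a subsequence (not relabeled) with $x_n\wto x$ and aim at $x=0$. Given any $v\in\cup_{m\geq 1}V_m$, for all $n$ large enough one has $v\in V_n$, so
\[
\pscal{x_n,(A-\lambda)v} \;=\; \pscal{(A-\lambda)x_n,v} \;=\; \pscal{P_{V_n}(A-\lambda)x_n,v} \;=\; (\lambda_n-\lambda)\pscal{x_n,v}\longrightarrow 0.
\]
Passing to the weak limit on the left yields $\pscal{x,(A-\lambda)v}=0$ for every $v\in\cup_m V_m$.

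To conclude $x=0$, I need to know that $(A-\lambda)\big(\cup_m V_m\big)$ is dense in $\gH$. Here I use both hypotheses that have not yet been exploited: first, $\overline{\cup_m V_m}^{D(A)}=D(A)$ implies that for each $y\in D(A)$ there is a sequence in $\cup_m V_m$ converging to $y$ in graph norm, so $(A-\lambda)(\cup_m V_m)$ is dense in $(A-\lambda)D(A)$ for the norm of $\gH$; second, $\lambda\notin\sigma(A)$ means $A-\lambda:D(A)\to\gH$ is a bijection, hence $(A-\lambda)D(A)=\gH$. Combining these, $(A-\lambda)(\cup_m V_m)$ is dense in $\gH$, and the orthogonality relation forces $x=0$.

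The only subtle point (and the one I expect to be the main obstacle) is this last density step: one must be careful to use the graph-norm density of $\cup_m V_m$ in $D(A)$, not merely the $\gH$-density, because $A$ is in general unbounded and norm-density in $\gH$ would not be enough to control $(A-\lambda)y_m-(A-\lambda)y$. The rest is bookkeeping with finite-dimensional spectral theory and a standard subsequence argument to promote weak convergence along subsequences to weak convergence of the whole sequence.
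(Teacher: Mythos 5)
Your proof is correct and follows essentially the same route as the paper: choose a unit eigenvector $x_n$ of $A_{|V_n}$ with eigenvalue $\lambda_n\to\lambda$, test the weak limit $x$ against $(A-\lambda)v$ for $v\in\cup_m V_m$, and use the graph-norm density of $\cup_m V_m$ in $D(A)$. The only (harmless) variation is in the last step: the paper deduces $x\in D(A^*)=D(A)$ and $Ax=\lambda x$, then uses that $\lambda$ is not an eigenvalue, whereas you use the surjectivity of $A-\lambda$ to get density of $(A-\lambda)(\cup_m V_m)$ in $\gH$; both are legitimate consequences of $\lambda\notin\sigma(A)$.
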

\begin{proof}
It is partly contained in \cite{DavPlu-04}.
Let $\lambda\in\Spu(A)$ and consider $x_n\in V_n\setminus\{0\}\subset D(A)$ such that $P_{V_n}(A-\lambda_n)x_n=0$ with $\lim_{n\to\ii}\lambda_n=\lambda$. Dividing by $\norm{x_n}$ if necessary, we may assume that $\norm{x_n}=1$ for all $n$ in which case $P_{V_n}(A-\lambda)x_n\to0$ strongly. As $\{x_n\}$ is bounded, extracting a subsequence if necessary we may assume that $x_n\wto x$ weakly in $\gH$. What remains to be proven is that $x=0$.
Let $y\in \cup_{m\geq1}V_m$.  Taking $n$ large enough we may assume that $y\in V_n$. Next we compute the following scalar product 
$$0=\lim_{n\to\ii}\pscal{P_{V_n}(A-\lambda)x_n,y}=\lim_{n\to\ii}\pscal{x_n,(A-\lambda)y}=\pscal{x,(A-\lambda)y}.$$
As $\cup_{m\geq1}V_m$ is dense in $D(A)$ for the norm of $G(A)$, we deduce that $\pscal{x,(A-\lambda)y}=0$ for all $y\in D(A)$.
Thus $x\in D(A^*)=D(A)$ and it satisfies $Ax=\lambda x$. Hence $x=0$ since $\lambda$ is not an eigenvalue of $A$ by assumption. 
\end{proof}

The next lemma will be useful to identify points in ${\rm Spu}(A)$.
\begin{lemma}\label{lem:reciproque_Weyl} 
Assume that $A$ is as above.
Let $(x_n^1,...,x_n^K)$ be an orthonormal system of $K$ vectors in $D(A)$ such that $x_n^j\wto0$ for all $j=1..K$. Denote by $W_n$ the space spanned by $x_n^1,..., x_n^K$. If $\lambda\in\R$ is such that 
$\lim_{n\to\ii}{\rm dist}\left(\lambda\,,\,\sigma(A_{|W_n})\right)=0,$
then
$\lambda\in{\rm Spu}(A)\cup\sigma(A).$
\end{lemma}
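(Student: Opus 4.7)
\bigskip

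\noindent\textbf{Proof plan.} The plan is to show that if $\lambda\notin\sigma(A)$ then $\lambda\in\Spu(A)$, by constructing a sequence $\{V_n\}$ verifying Definition~\ref{def:spurious}. First, for each $n$ the hypothesis yields $\mu_n\in\sigma(A_{|W_n})$ with $\mu_n\to\lambda$ and a corresponding unit Galerkin eigenvector $y_n\in W_n$ satisfying $P_{W_n}(A-\mu_n)y_n=0$. Writing $y_n=\sum_{j=1}^K c_{n,j}x_n^j$ with $\sum_j|c_{n,j}|^2=1$, the weak convergence $x_n^j\wto 0$ passes to this bounded combination, so $y_n\wto 0$. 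Testing $P_{W_n}(A-\mu_n)y_n=0$ against $y_n\in W_n$ also produces the useful identity $\pscal{y_n,Ay_n}=\mu_n$.

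Next, I would fix a countable family $\{g_j\}_{j\geq1}\subset D(A)$ which is total for the graph norm (possible since $D(A)$ is separable for that norm) and construct $\{V_n\}$ inductively. Set $V_0=\{0\}$, and given $V_{n-1}$, let $U_n:=V_{n-1}+\mathrm{span}(g_n)\subset D(A)$ with an orthonormal basis $(f_1,\dots,f_{d_n})$. Pick $m_n>m_{n-1}$ large enough that $|\mu_{m_n}-\lambda|\leq 1/n$ and
\[
\max_{i\leq d_n}\bigl(|\pscal{f_i,y_{m_n}}|+|\pscal{Af_i,y_{m_n}}|\bigr)\leq \delta_n,
\]
for a tolerance $\delta_n>0$ to be chosen below. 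Such $m_n$ exists because $y_m\wto 0$ combined with the fact that $f_i,Af_i\in\gH$ are \emph{fixed} vectors makes each inner product arbitrarily small as $m\to\infty$. Define $V_n:=U_n+\mathrm{span}(y_{m_n})$. Nesting $V_n\subset V_{n+1}$ is automatic, and density $\overline{\bigcup_n V_n}^{D(A)}=D(A)$ follows from totality of $\{g_j\}$ in the graph norm, so conditions (i) and (iii) of Definition~\ref{def:spurious} are handled.

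To verify $\mathrm{dist}(\lambda,\sigma(A_{|V_n}))\to 0$, I decompose $y_{m_n}=r_n+z_n$ orthogonally with $r_n:=P_{U_n}y_{m_n}$ (so $\|r_n\|\leq\sqrt{d_n}\,\delta_n$ and $\|z_n\|\to 1$), and take $\hat z_n:=z_n/\|z_n\|$ as an approximate Galerkin eigenvector of $A_{|V_n}$ for eigenvalue $\mu_{m_n}$. In the orthonormal basis $(f_1,\dots,f_{d_n},\hat z_n)$ of $V_n$, the off-diagonal entries $\pscal{f_i,A\hat z_n}=\pscal{Af_i,\hat z_n}$ and the residual $\pscal{\hat z_n,A\hat z_n}-\mu_{m_n}$ (computed from $\pscal{y_{m_n},Ay_{m_n}}=\mu_{m_n}$ together with corrections involving $r_n$ and $Ar_n$) are all of order $\delta_n$, using that $A$ is bounded on the finite-dimensional $U_n$. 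Choosing $\delta_n$ small enough (depending on $d_n$, the norm of $A$ restricted to $U_n$, and $|\mu_{m_n}|$, all finite at step $n$) yields $\|P_{V_n}(A-\mu_{m_n})\hat z_n\|\leq 1/n$, hence $\mathrm{dist}(\mu_{m_n},\sigma(A_{|V_n}))\leq 1/n$ by the standard symmetric-matrix bound, and finally $\mathrm{dist}(\lambda,\sigma(A_{|V_n}))\to 0$. The main obstacle is precisely this quantitative block-diagonalization estimate: it relies crucially on the weak convergence $y_m\wto 0$ to kill the cross-terms tested against a finite, $n$-dependent collection of fixed vectors in $D(A)$, while carefully absorbing the growth of $d_n$ and $\|A_{|U_n}\|$ into the choice of $m_n$.
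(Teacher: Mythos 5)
Your proof is correct and follows essentially the same route as the paper: an inductive construction that interleaves a graph-norm-dense family with well-chosen copies of the polluting subspaces, using the weak convergence to zero to make the Galerkin matrix approximately block-diagonal and thus produce an eigenvalue near $\lambda$. The only (harmless) variation is that you adjoin just the one-dimensional span of the Galerkin eigenvector $y_{m_n}$ and carry out the block-diagonalization estimate quantitatively, whereas the paper adjoins the whole $K$-dimensional space $W_{m_{n+1}}$ and leaves the estimate implicit.
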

\begin{proof}
Consider any nondecreasing sequence $\{V_n\}$ such that $\overline{\cup_{n\geq1}V_n}^{D(A)}=D(A)$. Next we introduce $V'_1:=V_1$,  $m_1=0$ and we construct by induction a new sequence $\{V_n'\}$ and an increasing sequence $\{m_n\}$ as follows. Assume that $V'_n$ and $m_n$ are defined. As $x_m^k\wto0$ for all $k=1..j$, we have $\lim_{m\to\ii}\pscal{Ay,x_m^k}=0$ for all $y\in V'_n$ and all $k=1..K$. Hence the matrix of $A$ in $V'_n+W_m$ becomes diagonal by blocks as $m\to\ii$. Therefore there exists $m_{n+1}>m_n$ such that the matrix of $A$ in $V'_{n+1}:=V'_n+W_{m_{n+1}}$ has an eigenvalue which is at a distance $\leq1/n$ from $\lambda$.
As $V_n\subset V_n'$ for all $n$, we have $\overline{\cup_{n\geq1}V'_n}^{D(A)}=D(A)$. By construction we also have $\lim_{n\to\ii}{\rm dist}\left(\lambda\,,\,\sigma(A_{|V'_n})\right)=0$. Hence either $\lambda\in\sigma(A)$, or $\lambda\in {\rm Spu}(A)$.
\end{proof}

In the following we shall only be interested in the spurious eigenvalues of $A$ lying in the convex hull of $\hat\sigma_{\rm ess}(A)$. This is justified by the following simple result which tells us that pollution cannot occur below or above the essential spectrum.
\begin{lemma} Let $\lambda$ be a spurious eigenvalue of the self-adjoint operator $A$. Then one has
\begin{equation}
\tr\left(\chi_{(-\ii,\lambda]}(A)\right)=\tr\left(\chi_{[\lambda,\ii)}(A)\right)=+\ii.
\label{cond_pollution}
\end{equation}
Saying differently, $\lambda\in\Conv\left(\hat{\sigma}_{\rm ess}(A)\right)$.
\end{lemma}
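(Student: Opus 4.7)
The plan is to argue by contradiction using the Weyl sequence provided by Lemma \ref{lem:Weyl}. Suppose $\lambda \in \mathrm{Spu}(A)$ but $\tr(\chi_{(-\infty,\lambda]}(A)) < +\infty$; I will derive a contradiction. The argument for $\tr(\chi_{[\lambda,\infty)}(A)) = +\infty$ is symmetric, and together these imply that arbitrarily large and arbitrarily small points of $\sigma(A)$ exist, which forces $\lambda \in \Conv(\hat\sigma_{\rm ess}(A))$.

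First I apply Lemma \ref{lem:Weyl} to produce a sequence $\{x_n\}$ with $x_n \in V_n \subset D(A)$, $\|x_n\|=1$, $x_n \wto 0$ weakly, and $P_{V_n}(A-\lambda)x_n \to 0$ strongly in $\gH$. The key consequence is
\[
\pscal{x_n,(A-\lambda)x_n} = \pscal{x_n,P_{V_n}(A-\lambda)x_n} \longrightarrow 0,
\]
since $P_{V_n}x_n = x_n$.

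Next, set $\pi := \chi_{(-\infty,\lambda]}(A)$, which is by assumption a \emph{finite-rank} orthogonal projector, hence compact, and commutes with $A$. Since $\lambda \notin \sigma(A)$ (condition (iii) of Definition \ref{def:spurious}), the spectrum of $A$ is a closed set avoiding $\lambda$, so there exists $\epsilon > 0$ such that $\sigma(A) \cap (\lambda-\epsilon,\lambda+\epsilon) = \emptyset$. Combined with the finite-rank assumption on $\pi$, this gives the operator inequality $A(1-\pi) \geq (\lambda+\epsilon)(1-\pi)$ in the quadratic form sense on $D(A)$.

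Now I decompose $x_n = \pi x_n + (1-\pi) x_n$. Because $\pi$ is compact and $x_n \wto 0$, we have $\pi x_n \to 0$ strongly; consequently $\|(1-\pi)x_n\|^2 \to 1$ and $\pscal{\pi x_n,(A-\lambda)\pi x_n} \to 0$ (using that $A\pi$ is a bounded finite-rank operator). On the other hand, the bound on $(1-\pi)\gH$ yields
\[
\pscal{(1-\pi)x_n,(A-\lambda)(1-\pi)x_n} \;\geq\; \epsilon\,\|(1-\pi)x_n\|^2 \;\longrightarrow\; \epsilon > 0.
\]
Adding the two pieces contradicts $\pscal{x_n,(A-\lambda)x_n}\to 0$, closing the argument. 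I expect no serious obstacle here; the only point that requires a small amount of care is checking that the finite-rank piece $\pi x_n$ is indeed handled by weak-to-strong convergence under a compact operator and that the cross terms vanish because $\pi$ commutes with $A$, so the quadratic form splits cleanly along $\pi\gH \oplus (1-\pi)\gH$.
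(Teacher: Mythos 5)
Your proof is correct and follows essentially the same strategy as the paper's: take the Weyl sequence from Lemma \ref{lem:Weyl}, assume the spectral projector $\pi=\chi_{(-\infty,\lambda]}(A)$ is finite rank, use its compactness together with $x_n\wto0$ to kill the finite-dimensional part, and use the spectral-gap bound $A(1-\pi)\geq(\lambda+\epsilon)(1-\pi)$ to contradict $\pscal{x_n,(A-\lambda)x_n}\to0$. The only cosmetic quibble is the phrase ``combined with the finite-rank assumption on $\pi$'' before the operator inequality $A(1-\pi)\geq(\lambda+\epsilon)(1-\pi)$: that inequality is an immediate consequence of the spectral theorem and of $\lambda\notin\sigma(A)$ alone, with no need for finite rank (which is only used to get $\pi x_n\to0$ and $(A-\lambda)\pi x_n\to0$).
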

\begin{proof}
Assume for instance $P:=\chi_{(-\ii,\lambda]}(A)$ is finite-rank. As $\lambda\notin\sigma(A)$, we must have $P=\chi_{(-\ii,\lambda+\epsilon]}(A)$ for some $\epsilon>0$.
Let $\{x_n\}$ be as in Lemma \ref{lem:Weyl}. As $P$ is finite rank, $Px_n\to0$ and $(A-\lambda)Px_n\to0$ strongly in $\gH$. Therefore $P_{V_n}(A-\lambda)P^\perp x_n\to0$ strongly. Note that $(A-\lambda)P^\perp\geq \epsilon P^\perp$, hence
$\pscal{P_{V_n}(A-\lambda)P^\perp x_n,x_n}=\pscal{P^\perp(A-\lambda)P^\perp x_n,x_n}\geq \epsilon\norm{P^\perp x_n}^2$. As the left hand side converges to zero, we infer $\norm{x_n}\to0$ which contradicts Lemma \ref{lem:Weyl}.
\end{proof}

We have seen that pollution can only occur in the convex hull of $\hat{\sigma}_{\rm ess}(A)$. Levitin and Shargorodsky have shown in \cite{LevSha-04} that \eqref{cond_pollution} is indeed necessary and sufficient.
\begin{theorem}[Pollution in all spectral gaps \cite{LevSha-04}]\label{thm:general}
Let $A$ be a self-adjoint operator on $\gH$ with dense domain $D(A)$. Then 
$$\overline{\Spu(A)}\cup\hat\sigma_{\rm ess}(A)=\Conv\left(\hat\sigma_{\rm ess}(A)\right).$$
\end{theorem}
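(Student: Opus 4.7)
The plan is to prove the two inclusions separately. The easy direction $\overline{\Spu(A)}\cup\hat\sigma_{\rm ess}(A)\subset\Conv(\hat\sigma_{\rm ess}(A))$ is immediate from the preceding lemma (since $\Conv(\hat\sigma_{\rm ess}(A))$ is closed in $\R$), so the real work is the reverse inclusion.

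To prove $\Conv(\hat\sigma_{\rm ess}(A))\subset\overline{\Spu(A)}\cup\hat\sigma_{\rm ess}(A)$, fix $\lambda\in\Conv(\hat\sigma_{\rm ess}(A))\setminus\hat\sigma_{\rm ess}(A)$. There then exist $a,b\in\hat\sigma_{\rm ess}(A)$ with $a<\lambda<b$ (possibly $\pm\infty$). Using the spectral theorem, I would construct orthonormal families $(u_n),(v_n)\subset D(A)$, both weakly convergent to $0$ in $\gH$, such that $u_n\in\chi_{I^a_n}(A)\gH$ and $v_n\in\chi_{I^b_n}(A)\gH$ for disjoint bounded intervals $I^a_n,I^b_n$ clustering at $a$ and $b$ respectively (shrinking around these points when finite, or retreating to $\pm\infty$ otherwise). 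Since the spectral subspaces $\chi_{I^a_n}(A)\gH$ and $\chi_{I^b_n}(A)\gH$ are orthogonal and invariant under $A$, one automatically has $\pscal{u_n,v_n}=\pscal{Au_n,v_n}=0$.

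Then consider the one-parameter family $w_n(\theta):=\cos\theta\,u_n+\sin\theta\,v_n$, a unit vector in $D(A)$ weakly convergent to $0$. The Rayleigh quotient
$$\pscal{Aw_n(\theta),w_n(\theta)}=a_n\cos^2\theta+b_n\sin^2\theta,$$
with $a_n:=\pscal{Au_n,u_n}\to a$ and $b_n:=\pscal{Av_n,v_n}\to b$, takes every value in $[a_n,b_n]$, and this interval contains $\lambda$ for $n$ large. Picking $\theta_n$ that realizes $\lambda$ and setting $W_n:=\R w_n(\theta_n)$, the one-dimensional operator $A_{|W_n}$ has spectrum $\{\lambda\}$, so Lemma~\ref{lem:reciproque_Weyl} applied with $K=1$ yields $\lambda\in\Spu(A)\cup\sigma(A)$. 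If $\lambda\notin\sigma(A)$ we are done; otherwise $\lambda$ is necessarily an isolated eigenvalue (as $\lambda\notin\hat\sigma_{\rm ess}(A)$), and applying the same construction to a sequence $\lambda_k\to\lambda$ chosen in $\Conv(\hat\sigma_{\rm ess}(A))\setminus\sigma(A)$ places $\lambda$ in $\overline{\Spu(A)}$.

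The main obstacle is the clean construction of the spectral sequences $(u_n),(v_n)$ in the degenerate cases $a=-\infty$ or $b=+\infty$, where one must exploit that $\sigma(A)$ accumulates at infinity to extract a weakly null orthonormal sequence whose Rayleigh quotient escapes to $\pm\infty$ while keeping each vector in $D(A)$ (replace $\chi_{I^a_n}$ by $\chi_{[-n-1,-n]}$, etc.). The rest is an elementary computation with mutually orthogonal spectral subspaces; the genuinely non-trivial ingredient is Lemma~\ref{lem:reciproque_Weyl}, which does the heavy lifting of assembling admissible Galerkin sequences $V_n$ out of the one-dimensional spaces $W_n$.
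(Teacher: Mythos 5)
Your proposal is correct and follows essentially the same route as the paper: two singular (Weyl) sequences concentrated at points $a,b\in\hat\sigma_{\rm ess}(A)$ on either side of $\lambda$, an intermediate-value argument on the Rayleigh quotient of $\cos\theta\,u_n+\sin\theta\,v_n$, and then Lemma~\ref{lem:reciproque_Weyl} to conclude. The only differences are cosmetic refinements: you take exactly orthogonal vectors from disjoint spectral subspaces where the paper merely extracts asymptotically orthogonal Weyl sequences, and you spell out the (harmless) case where $\lambda$ happens to be an isolated eigenvalue, which the paper leaves implicit.
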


\begin{figure}[h]
\centering
\includegraphics[width=9cm]{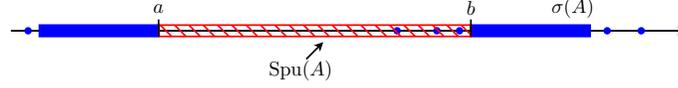}
\label{fig:pollution}
\caption{For an operator $A$ which has a spectral gap $[a,b]$ in its essential spectrum, pollution can occur in the whole gap.}
\end{figure}

\begin{remark}
As $J:=\Conv\left(\hat\sigma_{\rm ess}(A)\right)\setminus\hat\sigma_{\rm ess}(A)$ only contains discrete spectrum by assumption, Theorem \ref{thm:general} says that all points but a countable set in $J$ are potential spurious eigenvalues.
\end{remark}
\begin{remark}
It is easy to construct a sequence $V_n$ like in Definition \ref{def:spurious} such that ${\rm dist}\left(\lambda,\sigma(A_{|V_n})\right)\to0$ \emph{for all} $\lambda\in\Conv\left(\hat\sigma_{\rm ess}(A)\right)$, see \cite{LevSha-04}.
\end{remark}

Theorem \ref{thm:general} was proved for bounded self-adjoint operators in \cite{Pokrzywa-79} and generalized to bounded non self-adjoint operators in \cite{Descloux-81}. For the convenience of the reader, we give a short
\begin{proof}
Let $\lambda\in\Conv\left(\hat\sigma_{\rm ess}(A)\right)\setminus\hat\sigma_{\rm ess}(A)$ and fix some $a<\lambda$ and $b>\lambda$ such that $a,b\in\hat\sigma_{\rm ess}(A)$ (\emph{a priori} we might have $b=+\ii$ or $a=-\ii$). Let us consider two sequences $\{x_n\},\ \{y_n\}\subset D(A)$ such that $(A-a_n)x_n\to0$, $(A-b_n)y_n\to0$, $\norm{x_n}=\norm{y_n}=1$, $x_n\wto0$, $y_n\wto0$, $a_n\to a$ and $b_n\to b$. Extracting subsequences if necessary we may assume that $\pscal{x_n,y_n}\to0$ as $n\to\ii$. Next we consider the sequence $z_n(\theta):=\cos\theta\; x_n+\sin\theta\; y_n$ which satisfies $\norm{z_n(\theta)}\to1$ and $z_n(\theta)\wto0$ uniformly in $\theta$. We note that $\pscal{Az_n(0),z_n(0)}=a_n+o(1)$ and $\pscal{Az_n(\pi/2),z_n(\pi/2)}=b_n+o(1)$. Hence for $n$ large enough there exists a $\theta_n\in(0,\pi/2)$ such that $\pscal{Az_n(\theta_n),z_n(\theta_n)}=\lambda$.
The rest follows from Lemma \ref{lem:reciproque_Weyl}.
\end{proof}

\section{Pollution associated with a splitting of $\gH$}\label{sec:splitting}
As we have recalled in the previous section, the union of the essential spectrum and (the closure of) the polluted spectrum is always an interval: it is simply the convex hull of $\hat\sigma_{\rm ess}(A)$. It was also shown in \cite{LevSha-04} that it is possible to construct one sequence $\{V_n\}$ such that all possible points in $\Spu(A)$ are indeed $\{V_n\}$-spurious eigenvalues. 
But of course, not all $\{V_n\}$ will produce pollution. If for instance $P_{V_n}$ commutes with $A$ for all $n\geq1$, then pollution will not occur as is obviously seen from Lemma \ref{lem:Weyl}. The purpose of this section is to study spectral pollution if we add some assumptions on $\{V_n\}$. More precisely we will fix an orthogonal projector $P$ acting on $\gH$ and we will add the natural assumption that $P_{V_n}$ commute with $P$ for all $n$, i.e. that $V_n$ only contains vectors from $P\gH$ and $(1-P)\gH$. 

As we will see, under this new assumption the polluted spectrum (union $\hat\sigma_{\rm ess}(A)$) will in general be \emph{the union of two intervals}. Saying differently, by adding such an assumption on $\{V_n\}$, we can \emph{create a hole in the polluted spectrum}. A typical situation is when our operator $A$ has a gap in its essential spectrum. Then we will see that it is possible to give very simple conditions\footnote{Loosely speaking it must not be too far from the spectral projector associated with the part of the spectrum above the gap, as we will see below.} on $P$ which allow to completely avoid pollution in the gap. 

Note that our results of this section can easily be generalized to the case of a partition of unity $\{P_i\}_{i=1}^p$ of commuting projectors such that $1=\sum_{i=1}^pP_i$. Adding the assumption that $P_{V_n}$ commutes with all $P_i$'s, we would create $p$ holes in the polluted spectrum. This might be useful if one wants to avoid spectral pollution in several gaps at the same time.

\subsection{A general result}\label{sec:splitting_general}
We start by defining properly $P$-spurious eigenvalues.

\begin{definition}[Spurious eigenvalues associated with a splitting]\label{def_poll_P}\it
Consider an orthogonal projection $P:\gH\to\gH$. We say that $\lambda\in\R$ is a \emph{$P$-spurious eigenvalue} of the operator $A$ if there exist two sequences of finite dimensional spaces $\{V_n^+\}_{n\geq1}\subset P\gH\cap D(A)$ and $\{V_n^-\}_{n\geq1}\subset (1-P)\gH\cap D(A)$ with $V_n^\pm\subset V_{n+1}^\pm$ for any $n$, such that 
\begin{enumerate}
\item $\overline{\cup_{n\geq1}(V_n^-\oplus V_n^+)}^{D(A)}=D(A)$;
\item $\displaystyle\lim_{n\to\ii}{\rm dist}\left(\lambda,\sigma\left(A_{|(V_n^+\oplus V_n^-)}\right)\right)=0$;
\item $\lambda\notin\sigma(A)$.
\end{enumerate}
We denote by $\Spu(A,P)$ the set of $P$-spurious eigenvalues of the operator $A$.
\end{definition}

Now we will show as announced that contrarily to $\overline{\Spu(A)}\cup\hat{\sigma}_{\rm ess}(A)$ which is always an interval, $\overline{\Spu(A,P)}\cup\hat{\sigma}_{\rm ess}(A)$ is the union of two intervals, hence it may have a ``hole''. 

\begin{theorem}[Characterization of $P$-spurious eigenvalues]\label{thm:P} Let $A$ be a self-adjoint operator with dense domain $D(A)$. Let $P$ be an orthogonal projector on $\gH$ such that $P\cC\subset D(A)$ for some $\cC\subset D(A)$ which is a core for $A$. We assume that $PAP$ (resp. $(1-P)A(1-P)$) is essentially self-adjoint on $P\cC$ (resp. $(1-P)\cC$), with closure denoted as $A_{|P\gH}$ (resp. $A_{|(1-P)\gH}$).
We assume also that
\begin{equation}
\inf\hat{\sigma}_{\rm ess}\big(A_{|(1-P)\gH}\big)\leq \inf\hat{\sigma}_{\rm ess}\big(A_{|P\gH}\big).
\label{assump_ordre} 
\end{equation}
Then we have
\begin{multline}
\overline{\Spu(A,P)}\cup\hat{\sigma}_{\rm ess}(A)=\left[\inf\hat{\sigma}_{\rm ess}(A),\sup\hat{\sigma}_{\rm ess}\big(A_{|(1-P)\gH}\big)\right]\\
\cup \left[\inf\hat{\sigma}_{\rm ess}\big(A_{|P\gH}\big),\sup\hat{\sigma}_{\rm ess}(A)\right].
\label{formula__conv_spectrum}
\end{multline}
\end{theorem}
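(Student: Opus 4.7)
The proof has two inclusions---an ``upper bound'' that localizes any $P$-spurious eigenvalue, and a ``lower bound'' that realizes every candidate $\lambda$ as such---with the second direction being markedly more delicate than the first.

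\textbf{Upper bound (inclusion $\subseteq$).} Any $\lambda\in\Spu(A,P)$ sits in $\Conv(\hat\sigma_{\rm ess}(A))$ by the lemma preceding Theorem \ref{thm:general}, so what remains is to rule out $\lambda$ in the ``hole'' $J:=(\sup\hat\sigma_{\rm ess}(A_{|(1-P)\gH}),\,\inf\hat\sigma_{\rm ess}(A_{|P\gH}))$. The splitting version of Lemma \ref{lem:Weyl}, whose proof is identical with $V_n=V_n^+\oplus V_n^-$, would yield a normalized $x_n=x_n^++x_n^-\in V_n^+\oplus V_n^-$ with $x_n\wto 0$ and $P_{V_n}(A-\lambda)x_n\to 0$. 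Testing this against $x_n^+$ and $x_n^-$ separately (both lie in $V_n$) gives
\[
\langle Ax_n^\pm,\,x_n^\pm\rangle=\lambda\,\|x_n^\pm\|^2-\mathrm{Re}(c_n)+o(1),\qquad c_n:=\langle Ax_n^+,x_n^-\rangle.
\]
Picking $\delta>0$ with $[\lambda-\delta,\lambda+\delta]\subset J$ and introducing the finite-rank spectral projectors $\pi_+:=\chi_{(-\infty,\lambda+\delta]}(A_{|P\gH})$ and $\pi_-:=\chi_{[\lambda-\delta,+\infty)}(A_{|(1-P)\gH})$, the weak convergences $x_n^\pm\wto 0$ (from boundedness of $P,\,1-P$) imply $\pi_\pm x_n^\pm\to 0$ in norm; the spectral inequalities on $\pi_\pm^\perp$ then force $\langle Ax_n^+,x_n^+\rangle\geq(\lambda+\delta)\|x_n^+\|^2+o(1)$ and $\langle Ax_n^-,x_n^-\rangle\leq(\lambda-\delta)\|x_n^-\|^2+o(1)$. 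Combined with the two Weyl-type relations above, one obtains the incompatible bounds $\mathrm{Re}\,c_n\leq-\delta\|x_n^+\|^2+o(1)$ and $\mathrm{Re}\,c_n\geq\delta\|x_n^-\|^2+o(1)$, so $\delta\leq o(1)$, a contradiction.

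\textbf{Lower bound setup (inclusion $\supseteq$).} I would first record a splitting analog of Lemma \ref{lem:reciproque_Weyl}: if $W_n=W_n^+\oplus W_n^-\subset(P\gH\oplus(1-P)\gH)\cap D(A)$ is spanned by orthonormal vectors tending weakly to zero and $\mathrm{dist}(\lambda,\sigma(A_{|W_n}))\to 0$, then $\lambda\in\Spu(A,P)\cup\sigma(A)$. Its proof transcribes that of Lemma \ref{lem:reciproque_Weyl} verbatim: start from dense increasing sequences in $P\cC$ and $(1-P)\cC$ and augment them by $W_{m_n}$, with $m_n$ chosen via diagonal extraction so that the matrix of $A$ on the augmented space asymptotically block-decouples between the old and new vectors. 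It is then enough to construct, for each $\lambda$ in the right-hand side with $\lambda\notin\sigma(A)$, such a subspace $W_n$ whose Galerkin matrix has an eigenvalue tending to $\lambda$. Arguing symmetrically for the two intervals, I focus on the first, fix $a:=\inf\hat\sigma_{\rm ess}(A)$ and $b:=\sup\hat\sigma_{\rm ess}(A_{|(1-P)\gH})$, and select Weyl sequences $\xi_n\in D(A)$ for $A$ at $a$ and $\eta_n\in(1-P)\gH\cap D(A)$ for $A_{|(1-P)\gH}$ at $b$, normalized and weakly vanishing; by diagonal extraction all cross products $\langle\xi_n,\eta_n\rangle$ and $\langle A\xi_n,\eta_n\rangle$ are made to vanish. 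As in Theorem \ref{thm:general}'s proof, $z_n(\theta):=\cos\theta\,\xi_n+\sin\theta\,\eta_n$ has Rayleigh quotient tending uniformly in $\theta$ to $a\cos^2\theta+b\sin^2\theta$, so some $\theta_n\in[0,\pi/2]$ makes it equal to $\lambda$. The candidate splitting subspace is $W_n:=\mathrm{span}\bigl(\xi_n^+,\,\cos\theta_n\,\xi_n^-+\sin\theta_n\,\eta_n\bigr)$, of dimension $\leq 2$.

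\textbf{Main obstacle.} The crucial step is to verify that one eigenvalue of the $2\times 2$ Galerkin matrix on $W_n$ actually converges to $\lambda$---not merely that Courant--Fischer gives $\mu_1^n\leq\lambda\leq\mu_2^n$. Unlike in Theorem \ref{thm:general}, the one-dimensional trick (using $\mathrm{span}(z_n(\theta_n))$, on which the single eigenvalue equals the Rayleigh quotient) is unavailable here because this span does not respect the splitting, and the minimal splitting subspace containing $z_n(\theta_n)$ is genuinely two-dimensional, carrying a spectral gap that must be controlled. The characteristic equation of the $2\times 2$ matrix reads $(\mu-a_n')(\mu-b_n')=|e_n|^2$, and its three entries can, via the Weyl identity $(A-a)\xi_n\to 0$ and the extracted orthogonalities, all be expressed in terms of the internal cross term $c_n:=\langle A\xi_n^+,\xi_n^-\rangle$ arising from the decomposition of the Weyl sequence $\xi_n$. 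A careful computation---using that the Rayleigh quotient of $z_n(\theta_n)\in W_n$ equals $\lambda+o(1)$ together with the trace constraint on the matrix---shows $(\lambda-a_n')(\lambda-b_n')-|e_n|^2\to 0$, so by continuity one eigenvalue of the matrix tends to $\lambda$. The splitting reciproque lemma then places $\lambda$ in $\Spu(A,P)$ (as $\lambda\notin\sigma(A)$), and a standard density argument takes care of the endpoints of each interval.
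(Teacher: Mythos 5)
Your first inclusion is essentially the paper's Step 3: the two Weyl-type identities obtained by testing against $x_n^\pm$, combined with the fact that $P(A-\lambda)P$ and $-(1-P)(A-\lambda)(1-P)$ are bounded below by $\delta$ modulo finite-rank corrections on the hole $(d_1,c_2)$, are exactly the paper's argument, and your version is correct (up to two small points you should make explicit: one must first replace $x_n$ by elements of the core $\cC$ so that the spectral inequalities for $A_{|P\gH}$ -- defined as the closure of $PAP$ on $P\cC$ -- actually apply to $x_n^\pm$, and one must also run the same argument to exclude $\sigma_{\rm ess}(A)$ from the hole, since the left-hand side of the theorem contains $\hat\sigma_{\rm ess}(A)$).

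The lower bound, however, fails at precisely the step you flag as the main obstacle: the claimed limit $(\lambda-a_n')(\lambda-b_n')-|e_n|^2\to0$ is false. Set $\alpha_n=\norm{\xi_n^+}$, $\gamma_n=\pscal{A\xi_n^+,\xi_n^-}$ and $\delta_n=\pscal{A\xi_n^+,\eta_n}$; the Weyl identity and your extracted orthogonalities control $\gamma_n+\overline{\gamma_n}$-type combinations and the \emph{sum} $\pscal{A\xi_n^+,\eta_n}+\pscal{A\xi_n^-,\eta_n}$, but not $\gamma_n$ and $\delta_n$ separately, so the entries of $M_n$ are \emph{not} all expressible through $c_n$ alone. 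Carrying the computation through with the constraint $\cos^2\theta_n\,a+\sin^2\theta_n\,b=\lambda+o(1)$ gives
$$\norm{\xi_n^+}^2\norm{w_n}^2\det(M_n-\lambda)=-\big[\cos\theta_n(a-\lambda)\alpha_n^2+\sin\theta_n\,\Re\,\delta_n\big]^2-\sin^2\theta_n(\Im\,\delta_n)^2+o(1),$$
which is $\leq0$ (consistent with $\mu_1^n\leq\lambda\leq\mu_2^n$, which is all the Rayleigh quotient of $z_n(\theta_n)$ plus the trace can ever give) but is generically bounded away from $0$. Concretely, if $\gamma_n,\delta_n\to0$ -- compatible with all your extractions, and automatic when $A$ commutes with $P$ -- then $M_n$ is asymptotically diagonal with entries $a$ and $\lambda+\cos^2\theta_n(\lambda-a)\alpha_n^2\norm{w_n}^{-2}$, and since $\cos^2\theta_n\to(b-\lambda)/(b-a)>0$ for $\lambda\in(a,d_1)$, neither eigenvalue approaches $\lambda$ unless $\alpha_n\to0$. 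The paper avoids this by a different mechanism: it first disposes of $[c_1,d_1]$ with subspaces lying entirely in $(1-P)\gH$ (Theorem \ref{thm:general} applied to $A_{|(1-P)\gH}$), and for the remaining piece it applies the intermediate value theorem to an \emph{actual eigenvalue} $\mu_n(\theta)$ of the $2\times2$ Galerkin matrix, keeping one basis vector fixed and interpolating only the other, with endpoint control at $\theta=0$ and $\theta=\pi/2$ and an a priori bound keeping the second eigenvalue away from the target interval. That a priori bound, and the very applicability of your ``splitting reciproque lemma'', require $\xi_n^+\norm{\xi_n^+}^{-1}\wto0$, which fails for a general Weyl sequence (e.g. when $\norm{\xi_n^+}\to0$); this is the content of the paper's Lemma \ref{lem:Weyl_weak_0_basis}, which your proposal omits entirely.
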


\begin{figure}[h]
\centering
\includegraphics[width=10cm]{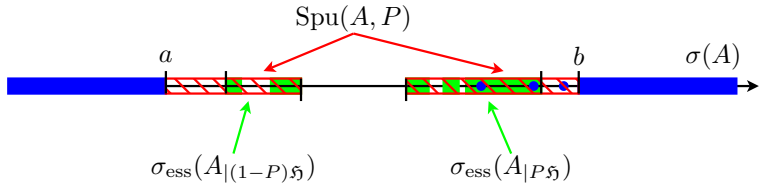}
\label{fig:poll_P}
\caption{Illustration of Theorem \ref{thm:P}: for an operator $A$ with a gap $[a,b]$ in its essential spectrum, pollution can occur in the whole gap, except between the convex hulls of $\hat{\sigma}_{\rm ess}\big(A_{|P\gH}\big)$ and $\hat{\sigma}_{\rm ess}\big(A_{|(1-P)\gH}\big)$.}
\end{figure}

Let us emphasize that condition \eqref{assump_ordre} always holds true, exchanging $P$ and $1-P$ if necessary. Usually we will assume for convenience that $1-P$ is ``associated with the lowest part of the spectrum'' in the sense of \eqref{assump_ordre}.

As mentioned before, an interesting example is when $A$ possesses a gap $[a,b]$ in its essential spectrum, i.e. such that $(a,b)\cap\sigma_{\rm ess}(A)=\emptyset$ and
$$\tr\left(\chi_{(-\ii,a]}(A)\right)=\tr\left(\chi_{[b,\ii)}(A)\right)=+\ii.$$
Then taking $\Pi=\chi_{[c,\ii)}(A)$ and $\cC=D(A)$ we easily see that 
${\Spu(A,\Pi)}\cap(a,b)=\emptyset$.
The idea that we shall pursue in the next section is simply that if $P$ is ``not too far from $\Pi$'', then we may be able to avoid completely pollution in the gap $[a,b]$. 

Before writing the proof of Theorem \ref{thm:P}, we make some remarks.

\begin{remark}\label{rmk:ess_self-adjoint}\it
If the symmetric operators $PAP$ and $(1-P)A(1-P)$ are both semi-bounded on their respective domains $P\cC$ and $(1-P)\cC$, then the inclusion $\subseteq$ in \eqref{formula__conv_spectrum} is also true provided that $A_{|P\gH}$ and $A_{|(1-P)\gH}$ are defined as the corresponding Friedrichs extensions. The essential self-adjointness is only used to show the converse inclusion $\supseteq$.
\end{remark}

\begin{remark}
An interesting consequence of Theorem \ref{thm:P} is that the set of spurious eigenvalues varies continuously when the projector $P$ is changed (in an appropriate norm for which the spectra of $A_{|P\gH}$ and $A_{|(1-P)\gH}$ change continuously). This has important practical consequences: even if one knows a projector which does not create pollution, it could in principle be difficult to numerically build a basis respecting the splitting of $\gH$ induced by $P$. However we know that pollution will only appear at the edges of the gap if the elements of the Galerkin basis are only known approximately.
\end{remark}

\begin{proof}
We will make use of the following result, whose proof will be omitted (it is an obvious adaptation of the proof of Lemma \ref{lem:reciproque_Weyl}):
\begin{lemma}\label{lem:reciproque_Weyl_P} 
Assume that $A$ is as above.
Let $(x_n^1,...,x_n^K)$ and $(y_n^1,...,y_n^{K'})$ be two orthonormal systems\footnote{We will allow $K=0$ or $K'=0$.} in $P\gH\cap D(A)$ and $(1-P)\gH\cap D(A)$ respectively, such that $x_n^j\wto0$ and $y_n^{k'}\wto0$ for all $j=1..K$ and $j'=1..K'$. Denote by $W_n$ the space spanned by $x_n^1,..., x_n^K,y_n^1,...,y_n^{K'}$. If $\lambda\in\R$ is such that 
$\lim_{n\to\ii}{\rm dist}\left(\lambda\,,\,\sigma(A_{|W_n})\right)=0,$
then $\lambda\in{\rm Spu}(A,P)\cup\sigma(A)$.
\end{lemma}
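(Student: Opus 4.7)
The plan is to mimic the proof of Lemma \ref{lem:reciproque_Weyl}, with the extra constraint that every enriched subspace must respect the splitting $\gH=P\gH\oplus(1-P)\gH$. The whole novelty is bookkeeping: the $x_m^j$ and the $y_m^{k'}$ already live in $P\gH$ and $(1-P)\gH$ respectively, so enrichment can be done componentwise without any effort.

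First, I would fix any pair of nondecreasing sequences $\{V_n^+\}_{n\geq 1}\subset P\gH\cap D(A)$ and $\{V_n^-\}_{n\geq 1}\subset (1-P)\gH\cap D(A)$ satisfying $\overline{\cup_{n\geq1}(V_n^+\oplus V_n^-)}^{D(A)}=D(A)$. Such a pair exists because the hypothesis $P\cC\subset D(A)$ (inherited from the setting of Theorem \ref{thm:P}) together with $\cC$ being a core for $A$ lets one decompose every $\phi\in\cC$ as $P\phi\in P\cC\subset D(A)$ plus $(1-P)\phi\in(1-P)\cC\subset D(A)$, and approximate in graph norm from $P\cC$ and $(1-P)\cC$ separately.

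Next, I construct enriched spaces $\tilde V_n^\pm$ and integers $m_n$ by induction, setting $\tilde V_1^\pm:=V_1^\pm$, $m_1:=0$, and, given $\tilde V_n^\pm$ and $m_n$, defining
\begin{equation*}
\tilde V_{n+1}^+:=\tilde V_n^++\mathrm{span}(x_{m_{n+1}}^1,\dots,x_{m_{n+1}}^K)+V_{n+1}^+,
\end{equation*}
and analogously for $\tilde V_{n+1}^-$ with the $y_{m_{n+1}}^{k'}$. The choice of $m_{n+1}>m_n$ is driven by the same decoupling argument as in Lemma \ref{lem:reciproque_Weyl}: for every $\phi$ in the fixed finite-dimensional subspace $\tilde V_n^+\oplus\tilde V_n^-$, the weak convergences $x_m^j\wto 0$, $y_m^{k'}\wto 0$ yield $\pscal{\phi,x_m^j}\to 0$, $\pscal{\phi,y_m^{k'}}\to 0$ and, since $A\phi\in\gH$ is fixed, $\pscal{A\phi,x_m^j}\to 0$, $\pscal{A\phi,y_m^{k'}}\to 0$. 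Hence in the basis obtained by concatenating a basis of $\tilde V_n^+\oplus\tilde V_n^-$ with the orthonormal family $\{x_m^j,y_m^{k'}\}$ the Gram matrix tends to the identity and the matrix of $A$ tends to the block-diagonal matrix with blocks $A_{|\tilde V_n^+\oplus\tilde V_n^-}$ and $A_{|W_m}$. By continuity of generalized eigenvalue problems one picks $m_{n+1}$ so large that $\sigma(A_{|\tilde V_n^+\oplus\tilde V_n^-+W_{m_{n+1}}})$ contains an eigenvalue within $1/n$ of $\lambda$.

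Finally, because $x_m^j\in P\gH$ and $y_m^{k'}\in(1-P)\gH$ by hypothesis, the inclusions $\tilde V_n^+\subset P\gH\cap D(A)$ and $\tilde V_n^-\subset(1-P)\gH\cap D(A)$ propagate automatically through the induction, so the splitting is preserved. The inclusion $V_n^\pm\subset\tilde V_n^\pm$ gives the density $\overline{\cup_n(\tilde V_n^+\oplus\tilde V_n^-)}^{D(A)}=D(A)$, and by construction $\mathrm{dist}(\lambda,\sigma(A_{|\tilde V_n^+\oplus\tilde V_n^-}))\to 0$. Invoking Definition \ref{def_poll_P}, either $\lambda\in\sigma(A)$ or $\lambda\in\Spu(A,P)$. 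The only mildly delicate step is the last choice of $m_{n+1}$, and it is essentially the same calculation as in Lemma \ref{lem:reciproque_Weyl}; no deeper obstacle appears.
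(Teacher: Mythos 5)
Your proposal is correct and takes essentially the same route the paper intends: the paper omits this proof, calling it an obvious adaptation of Lemma \ref{lem:reciproque_Weyl}, and your componentwise enrichment of a splitting-compatible exhausting sequence, with the weak convergences $x_m^j\wto0$, $y_m^{k'}\wto0$ forcing asymptotic block-diagonality of the matrix of $A$, is precisely that adaptation. The only adjustment needed is a bookkeeping one: since you also absorb $V_{n+1}^\pm$ into $\tilde V_{n+1}^\pm$, the decoupling and eigenvalue-selection step should be carried out with the fixed finite-dimensional block $\tilde V_n^\pm + V_{n+1}^\pm$ rather than $\tilde V_n^\pm$ alone (the argument is insensitive to which fixed block is used), so that the eigenvalue within $1/n$ of $\lambda$ is produced for the actual space $\tilde V_{n+1}^+\oplus\tilde V_{n+1}^-$ and not merely for $\tilde V_n^+\oplus\tilde V_n^-+W_{m_{n+1}}$, whose compression spectrum could otherwise be altered by the extra vectors.
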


In the rest of the proof, we denote $[a,b]:=\Conv\left(\hat{\sigma}_{\rm ess}(A)\right)$, $[c_1,d_1]:={\rm Conv}\left(\hat{\sigma}_{\rm ess}\big(A_{|(1-P)\gH}\big)\right)$ and $[c_2,d_2]:={\rm Conv}\left(\hat{\sigma}_{\rm ess}\big(A_{|P\gH}\big)\right)$. For simplicity we also introduce $c=\min(c_1,c_2)=c_1$, and $d=\max(d_1,d_2)$. Recall that we have assumed $c_1\leq c_2$.

\paragraph{Step 1.}
First we collect some easy facts. The first is to note that $\Spu(A,P)\subset\Spu(A)\subset[a,b]$, where we have used Theorem \ref{thm:general}.
Next we claim that 
\begin{equation}
 [c_1,d_1]\cup[c_2,d_2]\subset{\Spu(A,P)}\cup\sigma(A)\cap[a,b]. 
\label{inclusion_pollution_1}
\end{equation}
This is indeed an obvious consequence of Theorem \ref{thm:general} applied to $A_{|P\gH}$ and $A_{|(1-P)\gH}$, and of Lemma \ref{lem:reciproque_Weyl_P}.

\paragraph{Step 2.} The second step is less obvious, it consists in proving that
\begin{equation}
[a,c]\cup[d,b]\subset{\Spu(A,P)}\cup\sigma(A)\cap[a,b]
\label{inclusion_pollution_2}
\end{equation}
which then clearly implies
$$[a,d_1]\cup[c_2,b]\subset{\Spu(A,P)}\cup\sigma(A)\cap[a,b].$$

Let us assume for instance that $d<b$ and prove the statement for $[d,b]$ (the proof is the same for $[a,c]$). Note that $b$ may \emph{a priori} be equal to $+\ii$ but of course we always have under this assumption $d<+\ii$. In principle we could however have $d=-\ii$. 
In the rest of the proof of \eqref{inclusion_pollution_2}, we fix some finite $\lambda\in(d,b)$ and prove that $\lambda\in {\Spu(A,P)}\cup\sigma(A)$. We also fix some finite $d'$ such that $d<d'<\lambda$.
We will use the following
\begin{lemma}\label{lem:Weyl_weak_0_basis} Assume that $b\in\hat\sigma_{\rm ess}(A)$. Then there exists a Weyl sequence $\{x_n\}\subset \cC$ such that $(A-b_n)x_n\to0$, $\norm{x_n}=1$, $x_n\wto0$, $b_n\to b$ and
\begin{equation}
\frac{Px_n}{\|Px_n\|}\wto0\ \text{ and }\ \frac{(1-P)x_n}{\|(1-P)x_n\|}\wto0\ \text{ weakly.}
\end{equation}
\end{lemma}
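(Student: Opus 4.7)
The plan is to build the desired Weyl sequence in two stages: first produce a standard Weyl sequence for $A$ at $b$ lying in $\cC$, and then, if necessary, adjust it so that its $P$- and $(1-P)$-components each carry a definite, bounded-below share of the norm. Once the latter is in place, the weak convergence of the normalized projections follows for free: $x_n\wto 0$ in $\gH$ already forces $Px_n\wto 0$ and $(1-P)x_n\wto 0$, and dividing by norms that are uniformly bounded below preserves weak convergence to zero.

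For the first stage I would invoke $b\in\hat\sigma_{\rm ess}(A)$ to extract the usual Weyl sequence $\{y_n\}\subset D(A)$ with $\|y_n\|=1$, $y_n\wto 0$, $(A-b_n)y_n\to 0$ and $b_n\to b$. Because $\cC$ is a core for $A$, a small graph-norm correction lets me replace $y_n$ by elements of $\cC$ without destroying any of the Weyl properties. Passing to a subsequence, I may assume $\|Py_n\|\to\alpha$ and $\|(1-P)y_n\|\to\beta$ with $\alpha^2+\beta^2=1$. In the benign case $\alpha,\beta>0$, the sequence $x_n:=y_n$ already satisfies every requirement.

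The main obstacle is the degenerate case where, say, $\alpha=0$, so that the Weyl sequence is asymptotically confined to $(1-P)\gH$. Here I would exploit the infinite-dimensionality of the approximate spectral subspace $E_\epsilon:=\chi_{J_\epsilon}(A)\gH$, with $J_\epsilon$ a shrinking neighborhood of $b$ (and the appropriate unbounded tail if $b=\pm\infty$). On the connected unit sphere of $E_\epsilon$, the function $v\mapsto\|Pv\|^2$ is continuous and takes its values in an interval $[\alpha_\epsilon,\beta_\epsilon]\subset[0,1]$. If a fixed $\mu\in(0,1)$ lies in $[\alpha_\epsilon,\beta_\epsilon]$ for all small $\epsilon$, an intermediate-value argument provides $v_n\in E_{1/n}$ with $\|Pv_n\|^2=\mu$; imposing orthogonality to the vectors chosen at previous stages gives $v_n\wto 0$. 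If instead $[\alpha_\epsilon,\beta_\epsilon]$ collapses to an endpoint of $[0,1]$ as $\epsilon\to 0$, then $E_\epsilon$ decomposes (nearly) orthogonally into infinite-dimensional pieces essentially contained in $P\gH$ and in $(1-P)\gH$, and a normalized symmetric sum of one representative from each piece produces the balanced Weyl sequence. A final graph-norm approximation places the construction in $\cC$.

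The technical heart of the proof will be the dichotomy inside $E_\epsilon$: showing rigorously that either $\|Pv\|^2$ sweeps out a fixed subinterval of $(0,1)$ uniformly in $\epsilon$, or else the spectral subspace itself splits into genuinely infinite-dimensional pieces aligned with $P$ and $1-P$, together with the orthogonalization step needed to upgrade a single balanced vector at each scale into a sequence weakly convergent to $0$. Everything else reduces to standard weak-convergence manipulations and routine core approximations.
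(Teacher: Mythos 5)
Your first stage, and the treatment of the benign case $\alpha,\beta>0$, coincide with the paper's proof. The gap is in the degenerate case, and it is structural rather than technical: you set out to repair the sequence so that both $\|Px_n\|$ and $\|(1-P)x_n\|$ stay bounded away from zero, but this is strictly stronger than what the lemma asserts, and it is in general \emph{impossible}. Take $\gH=\ell^2\oplus\ell^2$, $P$ the projection onto the first summand, and $A=0\oplus I$, so that $b=1\in\hat\sigma_{\rm ess}(A)$. Since $A$ commutes with $P$, one has $\|(A-b_n)x\|^2=b_n^2\|Px\|^2+(1-b_n)^2\|(1-P)x\|^2$, so \emph{every} Weyl sequence at $b=1$ satisfies $\|Px_n\|\to0$; no balanced Weyl sequence exists. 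Yet the lemma is true there (take $x_n=\epsilon_n f_n+e_n$, suitably normalized, with $\{f_n\}$ and $\{e_n\}$ orthonormal bases of the two summands: then $Px_n/\|Px_n\|=f_n\wto0$). Your dichotomy inside $E_\epsilon$ misses exactly this situation: when $[\alpha_\epsilon,\beta_\epsilon]$ collapses to $0$, the subspace $E_\epsilon$ is asymptotically contained in $(1-P)\gH$ \emph{alone}, there is no infinite-dimensional piece essentially contained in $P\gH$, and the ``normalized symmetric sum of one representative from each piece'' has nothing to sum.

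The correct repair --- and the one the paper uses --- abandons any attempt to bound $\|Px_n\|$ from below. In the case $\|Py_n\|\to0$, fix an orthonormal basis $\{e_i\}\subset P\cC$ of $P\gH$ and set $x_k:=y_{n_k}-r_k^+$ with $r_k^+=\sum_{i=1}^k\pscal{e_i,y_{n_k}}e_i$. Because $y_n\wto0$ and $(A-b_n)y_n\to0$, each of $\pscal{e_i,y_n}$, $\pscal{Ae_i,y_n}$ and $\pscal{e_i,(A-b_n)y_n}$ tends to $0$ for fixed $i$, so along a suitable subsequence both $\|r_k^+\|$ and $\|(A-b_{n_k})r_k^+\|$ vanish and $\{x_k\}$ is still a Weyl sequence in $\cC$. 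But now $Px_k$ is orthogonal to ${\rm span}(e_1,\dots,e_k)$, which forces $Px_k/\|Px_k\|\wto0$ \emph{regardless of how small $\|Px_k\|$ is}. This is the idea your proposal is missing: weak nullity of the normalized projection does not require the projection's norm to stay away from zero, only that the projection escape every fixed finite-dimensional subspace.
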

\begin{proof}
Let $b_n\to b$ and $\{y_n\}\subset \cC$ be a Weyl sequence such that $(A-b_n)y_n\to0$ with $\norm{y_n}=1$, $y_n\wto0$ (note we may assume $\{y_n\}\subset\cC$ since $\cC$ is a core for $A$). We denote $y_n=y_n^++y_n^-$ where $y_n^+\in P\cC\subset D(A)$ and $y_n^-\in P\cC\subset D(A)$. Extracting a subsequence, we may assume that $\norm{y_n^+}^2\to\ell^+$ and that  $\norm{y_n^-}^2\to\ell^-$; note $\ell^++\ell^-=1$. It is clear that if $\ell^\pm>0$, then $y_n^\pm\norm{y_n^\pm}^{-1}\wto0$ since $y_n^\pm\wto0$.
We will assume for instance $\ell^+=0$ and $\ell^-=1$. 

Next we fix an orthonormal basis $\{e_i\}\subset P\cC$ of $P\gH$, we define
$$r_k^+:=\sum_{i=1}^k\pscal{e_i,y_{n_k}}e_i$$
and note that
$$(A-b_{n_k})r_k^+=\sum_{i=1}^k\Big(\pscal{e_i,y_{n_k}}Ae_i+\pscal{e_i,(A-b_{n_k})y_{n_k}}e_i-\pscal{Ae_i,y_{n_k}}e_i\Big).$$
For $k$ fixed and any $i=1..k$, we have 
$$\lim_{n\to\ii}\pscal{e_i,y_{n}}=\lim_{n\to\ii}\pscal{e_i,(A-b_{n})y_{n}}=\lim_{n\to\ii}\pscal{Ae_i,y_{n}}=0.$$
Hence, for a correctly chosen subsequence $\{y_{n_k}^+\}$, we may assume that
$$\quad\text{ satisfies }\quad \lim_{k\to\ii}\norm{r_k^+}=\lim_{k\to\ii}\norm{(A-b_{n_k})r_k^+}=0.$$
Next we define $x_k:=y_{n_k}-r_k^+=(y_{n_k}^+-r_k^+)+y_{n_k}^-$ which satisfies $\norm{x_k}=1+o(1)$ since $\norm{r_k^+}\to0$.
By construction, we have $x_k^+=y_{n_k}^+-r_k^+\in {\rm span}(e_1,...,e_k)^\perp$, hence necessarily $x_k^+\norm{x_k^+}^{-1}\wto0$. Eventually, we have $(A-b_{n_k})x_k\to0$ strongly, by construction of $r_k^+$.
\end{proof}

In the rest of the proof we choose a sequence $\{x_n\}$ like in Lemma \ref{lem:Weyl_weak_0_basis} and denote $x_n^+=Px_n$ and $x_n^-=(1-P)x_n$. By the definition of $d$ and the fact that $A_{|(1-P)\gH}$ is essentially selfadjoint on $(1-P)\cC$, we can choose a Weyl sequence $\{y_n^-\}\subset (1-P)\cC$ such that $(1-P)(A-d_n)y_n^-\to0$, $\norm{y_n^-}=1$, $y_n^-\wto0$ weakly and $d_n\to d_1\leq d$. Extracting a subsequence from $\{y_n^-\}$ we may also assume that $y_n^-$ satisfies
\begin{equation}
\lim_{n\to\ii}\pscal{\frac{x_n^-}{\norm{x_n^-}},y_n^-}=\lim_{n\to\ii}\pscal{\frac{Ax_n^+}{\norm{x_n^+}},y_n^-}=\lim_{n\to\ii}\pscal{\frac{Ax_n^-}{\norm{x_n^-}},y_n^-}=0
\label{limit_pscal} 
\end{equation}

Let us now introduce the following orthonormal system
\begin{equation}
\left(\frac{x_n^+}{\norm{x_n^+}}\, ,\, v_n(\theta)\right)\qquad \text{ with }\quad v_n(\theta):=\frac{\cos\theta\;\frac{x_n^-}{\norm{x_n^-}}+\sin\theta\; y_n^-}{\sqrt{1+2\Re\cos\theta\sin\theta\pscal{\frac{x_n^-}{\norm{x_n^-}},y_n^-}}}
\label{def_basis} 
\end{equation}
and denote by $A_n(\theta)$ the $2\times2$ matrix of $A$ in this basis, with eigenvalues $\lambda_n(\theta)\leq\mu_n(\theta)$. As $x_n^+\norm{x_n^+}^{-1}\wto0$ weakly, we have 
\begin{equation}
\limsup_{n\to\ii}\sup_{\theta\in[0,\pi/2]}\lambda_n(\theta)\leq \limsup_{n\to\ii}\frac{\pscal{Ax_n^+,x_n^+}}{\norm{x_n^+}^2}\leq d_2\leq d.
\label{estim_first_eigenval} 
\end{equation}
When $\theta=0$, we know by construction of $x_n$ that $A_n(0)$ has an eigenvalue which converges to $b$ as $n\to\ii$. Since $b>d$ by assumption, this shows by \eqref{estim_first_eigenval} that this eigenvalue must be $\mu_n(0)$, hence we have $\mu_n(0)\to b$ as $n\to\ii$. On the other hand, the largest eigenvalue of $A_n(\pi/2)$ satisfies for $n$ large enough
$$\mu_n(\pi/2)\leq \max\left(\frac{\pscal{Ax_n^+,x_n^+}}{\norm{x_n^+}^{2}}\;,\; \pscal{Ay_n^-,y_n^-}\right)+\left|\pscal{\frac{Ax_n^+}{\norm{x_n^+}},y_n^-}\right|\leq d',$$
where we have used \eqref{limit_pscal}, $x_n^+\norm{x_n^+}^{-1}\wto0$, $y_n^-\wto0$, and the definition of $d'>d$. 

By continuity of $\mu_n(\theta)$, there exists a $\theta_n\in(0,\pi/2)$ such that $\mu_n(\theta_n)=\lambda$. 
Next we note that the two elements of the basis defined in \eqref{def_basis} both go weakly to zero by the construction of $x_n^\pm$ and of $y_n^-$. Hence our statement $\lambda\in {\Spu(A,P)}\cup\sigma(A)$ follows from Lemma \ref{lem:reciproque_Weyl_P}. 

\paragraph{Step 3.} The last step is to prove that when $d_1<c_2$, 
$$(d_1,c_2)\cap\big(\Spu(A,P)\cup\sigma_{\rm ess}(A)\big)=\emptyset$$
(there is nothing else to prove when $c_2\leq d_1$). 
We will prove that $(d_1,c_2)\cap\Spu(A,P)=\emptyset$, the proof for $\sigma_{\rm ess}(A)$ being similar. Note that under our assumption $d_1<c_2$, we must have $d_1<\ii$ and $c_2>-\ii$, hence $A_{|P\gH}$ and $A_{|(1-P)\gH}$ are semi-bounded operators.
As noticed in Remark \ref{rmk:ess_self-adjoint}, it is sufficient to assume for this step that $A_{|P\gH}$ and $A_{|(1-P)\gH}$ are the Friedrichs extensions of $(PAP,P\cC)$ and $((1-P)A(1-P),(1-P)\cC)$ without assuming \emph{a priori} that they are essentially self-adjoint.

Now we argue by contradiction and assume that there exists a Weyl sequence $\{x_n\}\in V_n^+\oplus V_n^-\subset D(A)$ like in Lemma \ref{lem:Weyl}, for some $\lambda\in(d_1,c_2)$. We will write $x_n=x_n^++x_n^-$ with $x_n^+\in V_n^+$ and $x_n^-\in V_n^-$. We have $P_{|V_n^+\oplus V_n^-}(A-\lambda)x_n\to0$, hence taking the scalar product with $x_n^+$ and $x_n^-$, we obtain
\begin{equation}
\lim_{n\to\ii}\pscal{(A-\lambda)x_n,x_n^+}=\lim_{n\to\ii}\pscal{(A-\lambda)x_n,x_n^-}=0.
\label{limit_pscal_pm}
\end{equation}
The space $\cC$ being a core for $A$, it is clear that we may assume further that $x_n\in\cC$ and still that \eqref{limit_pscal_pm} holds true. In this case we have $x_n^+,x_n^-\in D(A)$ hence we are allowed to write
$$\pscal{(A-\lambda)x_n^+,x_n^+}+\pscal{(A-\lambda)x_n^-,x_n^+}\to0,$$
$$\pscal{(A-\lambda)x_n^-,x_n^-}+\overline{\pscal{(A-\lambda)x_n^-,x_n^+}}\to0.$$
Taking the complex conjugate of the second line (the first term is real since $A$ is self-adjoint) and subtracting the two quantities, we infer that
\begin{equation}
\pscal{(A-\lambda)x_n^+,x_n^+}-\pscal{(A-\lambda)x_n^-,x_n^-}\to0.
\label{estim_convex_hull}
\end{equation}
As by assumption $\lambda\in(d_1,c_2)$, we have as quadratic forms on $P\cC$ and $(1-P)\cC$, $P(A-\lambda)P\geq \epsilon P-r$ and $-(1-P)(A-\lambda)(1-P)\geq \epsilon(1-P)-r'$ for some finite-rank operators $r$ and $r'$ and some $\epsilon>0$ small enough. 
Hence we have 
$$\pscal{(A-\lambda)x_n^+,x_n^+}-\pscal{(A-\lambda)x_n^-,x_n^-}\geq \epsilon\norm{x_n^+}^2+\epsilon\norm{x_n^-}^2+o(1).$$
This shows that we must have $x_n\to0$ which is a contradiction. \hfill$\square$
\end{proof}

\subsection{A simple criterion of no pollution}\label{sec:splitting_condition}
Here we give a very intuitive condition allowing to avoid pollution in a gap.
\begin{theorem}[Compact perturbations of spectral projector do not pollute]\label{thm:cond_no_pollute}
Let $A$ be a self-adjoint operator defined on a dense domain $D(A)$, and let $a<b$ be such that 
\begin{equation}
(a,b)\cap\sigma_{\rm ess}(A)=\emptyset\ \text{ and }\  \tr\left(\chi_{(-\ii,a]}(A)\right)=\tr\left(\chi_{[b,\ii)}(A)\right)=+\ii.
\label{assumption_a_b}
\end{equation}
Let $c\in(a,b)\setminus \sigma(A)$ and denote $\Pi:=\chi_{(c,\ii)}(A)$.
Let $P$ be an orthogonal projector satisfying the assumptions of Theorem \ref{thm:P}. We furthermore assume that 
$(P-\Pi)|A-c|^{1/2}$, initially defined on $D(|A-c|^{1/2})$, extends to a compact operator on $\gH$. Then we have
$$\Spu(A,P)\cap(a,b)=\emptyset.$$
\end{theorem}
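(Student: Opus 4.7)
The plan is to apply Theorem~\ref{thm:P}: under the compactness hypothesis I will show that $\inf\hat\sigma_{\rm ess}(A_{|P\gH})\geq b$ and $\sup\hat\sigma_{\rm ess}(A_{|(1-P)\gH})\leq a$, so that both intervals appearing in \eqref{formula__conv_spectrum} lie in $(-\ii,a]\cup[b,+\ii)$ and therefore avoid the gap. The ordering assumption \eqref{assump_ordre} is then automatic.

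The key preliminary step is to notice that $P-\Pi$ is itself compact on $\gH$. Since $c\in(a,b)\setminus\sigma(A)$ and $(a,b)$ contains no essential spectrum, there exists $\delta>0$ with $\sigma(A)\cap(c-\delta,c+\delta)=\emptyset$, hence $|A-c|\geq\delta$ and $|A-c|^{-1/2}$ is bounded. Denoting by $K$ the compact closure of $(P-\Pi)|A-c|^{1/2}$, the identity $P-\Pi=K\,|A-c|^{-1/2}$ on $\gH$ shows that $P-\Pi$ is compact; by duality $K^*$ is the compact closure of $|A-c|^{1/2}(P-\Pi)$. For $x\in P\cC$ one has $Px=x$, hence $(1-\Pi)x=(P-\Pi)x$, and splitting the quadratic form of $A-c$ across $\Pi$ and $1-\Pi$ gives
\begin{equation*}
\pscal{x,(A-c)x}=\|(A-c)^{1/2}\Pi x\|^2-\|K^*x\|^2\qquad(x\in P\cC),
\end{equation*}
while $\Pi x=-(P-\Pi)x$ for $x\in(1-P)\cC$ yields the symmetric identity
\begin{equation*}
\pscal{x,(A-c)x}=\|K^*x\|^2-\|(c-A)^{1/2}(1-\Pi)x\|^2\qquad(x\in(1-P)\cC).
\end{equation*}
Since $K^*$ is bounded, these imply that $A_{|P\gH}$ is bounded below and $A_{|(1-P)\gH}$ is bounded above, so $-\ii\notin\hat\sigma_{\rm ess}(A_{|P\gH})$ and $+\ii\notin\hat\sigma_{\rm ess}(A_{|(1-P)\gH})$.

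Now suppose, by contradiction, that $\lambda\in\sigma_{\rm ess}(A_{|P\gH})$ with $\lambda<b$. Since $P\cC$ is an operator core for $A_{|P\gH}$, a standard approximation of the Weyl sequence produces a normalized sequence $\{y_n\}\subset P\cC$ with $y_n\wto 0$ and $\|(A_{|P\gH}-\lambda)y_n\|\to 0$, so that $\pscal{y_n,A_{|P\gH}y_n}\to\lambda$. Compactness of $K^*$ and of $P-\Pi$ give $\|K^*y_n\|\to 0$ and $\|(P-\Pi)y_n\|\to 0$, hence $\|\Pi y_n\|^2=\|y_n\|^2-\|(P-\Pi)y_n\|^2\to 1$. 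For any $\mu\in(c,b)$, the projector $\chi_{(c,\mu]}(A)$ is finite rank, because the eigenvalues of $A$ in $(a,b)$ are of finite multiplicity and can accumulate only at the endpoints; therefore $\chi_{(c,\mu]}(A)y_n\to 0$ strongly. Since $c\notin\sigma(A)$ one has $\chi_{(-\ii,\mu]}(A)=(1-\Pi)+\chi_{(c,\mu]}(A)$, and the spectral theorem applied to $\Pi(A-c)\Pi$ yields
\begin{equation*}
\|(A-c)^{1/2}\Pi y_n\|^2\geq(\mu-c)\bigl(1-\|\chi_{(-\ii,\mu]}(A)y_n\|^2\bigr),
\end{equation*}
whose right-hand side tends to $\mu-c$. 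Combined with the first quadratic-form identity this forces $\lambda-c\geq\mu-c$; as $\mu<b$ is arbitrary, $\lambda\geq b$, a contradiction. The symmetric argument using the second identity gives $\sup\hat\sigma_{\rm ess}(A_{|(1-P)\gH})\leq a$, and Theorem~\ref{thm:P} then concludes. The main obstacle is really the first step showing that $P-\Pi$ is compact: it genuinely uses $c\notin\sigma(A)$ rather than merely $c\notin\sigma_{\rm ess}(A)$, for otherwise $|A-c|^{-1/2}$ would be unbounded and the claim $\|\Pi y_n\|\to 1$ would break down.
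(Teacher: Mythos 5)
Your proof is correct, and it follows the same overall strategy as the paper: reduce to showing $\sigma_{\rm ess}(A_{|P\gH})\subset[b,\ii)$ (and symmetrically for $1-P$) via a weakly null Weyl sequence in the core $P\cC$, then invoke Theorem~\ref{thm:P}. The execution of the key step differs, though. The paper expands $\pscal{P(A-\lambda)Px_n,x_n}$ by writing $P=(P-\Pi)+\Pi$, which produces genuine cross terms $(P-\Pi)(A-c)\Pi$; it kills them with Cauchy--Schwarz plus the compactness of $|A-c|^{1/2}(P-\Pi)$, then uses the operator inequality $\Pi(A-\lambda)\Pi\geq\Pi|A-c|\Pi$ (after shifting $c$ above $\lambda$, which is legitimate but requires a word about why the compactness hypothesis survives the shift) to force $\Pi x_n\to0$ and hence $x_n\to0$. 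You instead split the quadratic form of $A-c$ across $\Pi$ and $1-\Pi$, where the cross terms vanish exactly because $A$ commutes with $\Pi$, and use the identity $(1-\Pi)x=(P-\Pi)x$ on $P\gH$ to get the clean formula $\pscal{x,(A-c)x}=\|(A-c)^{1/2}\Pi x\|^2-\|K^*x\|^2$; you then pin the limit of $\|(A-c)^{1/2}\Pi y_n\|^2$ at $\lambda-c$ and bound it below by $\mu-c$ using the finite rank of $\chi_{(c,\mu]}(A)$, giving $\lambda\geq\mu$ for every $\mu<b$. This avoids the change of $c$ and yields the sharp threshold $b$ directly. You also make explicit a point the paper leaves implicit: since Theorem~\ref{thm:P} is phrased in terms of $\hat\sigma_{\rm ess}$, one must rule out $-\ii\in\hat\sigma_{\rm ess}(A_{|P\gH})$, which your semi-boundedness observation ($A_{|P\gH}\geq c-\|K^*\|^2$ on $P\cC$) does cleanly. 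Both arguments use the compactness hypothesis in exactly the same two places (compactness of $P-\Pi$ and of $|A-c|^{1/2}(P-\Pi)$ applied to a weakly null sequence), so the approaches are of equal strength; yours is somewhat tidier in the bookkeeping.
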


As we will see in Corollary \ref{cor:no_pollute}, Theorem \ref{thm:cond_no_pollute} is useful when our operator takes the form $A+B$ where $B$ is $A$-compact. Using the spectral projector $P=\Pi$ of $A$ will then avoid pollution for $A+B$, when $A$ is bounded from below.

\begin{remark}\label{contre_exemple1}\it
We give an example showing that the power $1/2$ in $|A-c|^{1/2}$ is sharp. Consider for instance an orthonormal basis $\{e_n^\pm\}$ of a separable Hilbert space $\gH$, and define 
$A:=\sum_{n\geq1}n|e_n^+\rangle\langle e_n^+|.$
Choosing $c=1/2$, we get 
$\Pi=\chi_{[1/2,\ii)}(A)=\sum_{n\geq1}|e_n^+\rangle\langle e_n^+|.$
Define now a new basis by $f_n^+=\cos\theta_ne_n^++\sin\theta_ne_n^-$, $f_n^-=\sin\theta_ne_n^+-\cos\theta_ne_n^-$, and introduce the associated projector $P=\sum_{n\geq1}|f_n^+\rangle\langle f_n^+|$. Consider then $V_n:=\text{span}\{f_1^\pm,\cdots,f_{n-1}^\pm,f_n^-\}$ for which we have $\sigma(A_{|V_n})=\{0,1,\cdots,n-1,n\sin^2\theta_n\}$. On the other hand it is easily checked that $(P-\Pi)|A-1/2|^\alpha$ is compact if and only if $n^\alpha \theta_n\to0$ as $n\to\ii$. Hence, if $0\leq \alpha<1/2$ we can take $\theta_n=1/\sqrt{2n}$ and we will have a polluted eigenvalue at $1/2$ whereas $(P-\Pi)|A-1/2|^\alpha$ is compact.
\end{remark}

We now write the proof of Theorem \ref{thm:cond_no_pollute}.

\begin{proof}
We will prove that $\sigma_{\rm ess}(A_{|P\gH})\subset[b,\ii)$. This will end the proof, by Theorem \ref{thm:P} and a similar argument for $A_{|(1-P)\gH}$. Assume on the contrary that $\lambda\in(-\ii,b)\cap\sigma_{\rm ess}(A_{|P\gH})$. Without any loss of generality, we may assume that $c>\lambda$ (changing $c$ if necessary). As $P\cC$ is a core for $A_{|P\gH}$, there exists a sequence $\{x_n\}\subset P\cC$ such that $x_n\wto0$ weakly in $\gH$, $\norm{x_n}=1$ and $P(A-\lambda)x_n\to0$ strongly in $\gH$. We have
\begin{multline}
 \pscal{(P-\Pi)(A-\lambda)(P-\Pi)x_n,x_n}
+2\Re\pscal{(P-\Pi)(A-c)\Pi x_n,x_n}\\+\pscal{\Pi(A-\lambda)\Pi x_n,x_n}=\pscal{P(A-\lambda)x_n,x_n}+(\lambda-c)2\Re\pscal{\Pi x_n,(P-\Pi)x_n}\label{decomp_proj}
\end{multline}
where we note that $Px_n=x_n\in D(A)$ and $\Pi x_n\in D(A)$ since $\Pi$ stabilizes $D(A)$.
As $c\notin\sigma(A)$, we have that $|A-c|^{-1/2}$ is bounded, hence $P-\Pi$ must be a compact operator, i.e. the last term of the right hand side of \eqref{decomp_proj} tends to 0 as $n\to\ii$.
By the Cauchy-Schwarz inequality we have
\begin{equation}
|\pscal{(P-\Pi)(A-c)\Pi x_n,x_n}|\leq \norm{|A-c|^{1/2}\Pi x_n}\norm{|A-c|^{1/2}(P-\Pi) x_n}.
\end{equation}
As by assumption $(P-\Pi)|A-c|^{1/2}$ is compact, we have that $(P-\Pi)(A-\lambda)(P-\Pi)$ and $|A-c|^{1/2}(P-\Pi)$ are also compact operators. Hence 
$$\lim_{n\to\ii}\norm{|A-c|^{1/2}(P-\Pi) x_n}=\lim_{n\to\ii}\pscal{(P-\Pi)(A-\lambda)(P-\Pi)x_n,x_n}=0.$$
On the other hand we have $\Pi(A-\lambda)\Pi=\Pi(A-c)\Pi+(c-\lambda)\Pi\geq \Pi|A-c|\Pi$ since we have chosen $c$ in such a way that $c>\lambda$, and by the definition of $\Pi$. Hence by \eqref{decomp_proj} we have an inequality of the form
$$\norm{|A-c|^{1/2}\Pi x_n}^2-2\epsilon_n\norm{|A-c|^{1/2}\Pi x_n}\leq \epsilon'_n$$
where $\lim_{n\to\ii}\epsilon_n=\lim_{n\to\ii}\epsilon'_n=0$. This clearly shows that 
$$\lim_{n\to\ii}\norm{|A-c|^{1/2}\Pi x_n}=0.$$
Therefore we deduce $\Pi x_n\to0$ strongly, $|A-c|^{1/2}$ being invertible. Hence $x_n=Px_n=(P-\Pi)x_n+\Pi x_n\to0$ and we have reached a contradiction.
\end{proof}

We now give a simple application of the above result.
\begin{corollary}\label{cor:no_pollute}
Let $A$ be a \underline{bounded-below} self-adjoint operator defined on a dense domain $D(A)$, and let $a<b$ be such that 
\begin{equation}
(a,b)\cap\sigma_{\rm ess}(A)=\emptyset\ \text{ and }\  \tr\left(\chi_{(-\ii,a]}(A)\right)=\tr\left(\chi_{[b,\ii)}(A)\right)=+\ii.
\label{assumption_a_b_bis}
\end{equation}
Let $c\in(a,b)$ be such that $c\notin \sigma(A)$ and denote $\Pi:=\chi_{(c,\ii)}(A)$.

Let $B$ be a symmetric operator such that $A+B$ is self-adjoint on $D(A)$ and such that $\big((A+B-i)^{-1}-(A-i)^{-1}\big)|A-c|^{1/2}$, initially defined on $D(|A-c|^{1/2})$, extends to a compact operator on $\gH$. Then we have
$$\Spu(A+B,\Pi)\cap(a,b)=\emptyset.$$
\end{corollary}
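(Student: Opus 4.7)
The plan is to apply Theorem~\ref{thm:cond_no_pollute} directly, with $A$ replaced by $A+B$ and the projector $P$ taken to be $\Pi=\chi_{(c,\ii)}(A)$ itself. In other words, I use the spectral projector of the unperturbed operator $A$ as the ``approximate'' spectral projector of $A+B$. Three things then have to be checked:
\begin{enumerate}
\item[(i)] $A+B$ satisfies condition \eqref{assumption_a_b_bis} with the same $a,b$ (possibly after slightly perturbing $c$ so that $c\notin\sigma(A+B)$);
\item[(ii)] $\Pi$ fulfills the abstract requirements on ``$P$'' from Theorem~\ref{thm:P} relative to $A+B$;
\item[(iii)] $(\Pi-\chi_{(c,\ii)}(A+B))|A+B-c|^{1/2}$ extends to a compact operator on $\gH$.
\end{enumerate}
Once these are in place, Theorem~\ref{thm:cond_no_pollute} yields $\Spu(A+B,\Pi)\cap(a,b)=\emptyset$.

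For (i) and (ii) the arguments are short. Since $c\notin\sigma(A)$, $|A-c|^{-1/2}$ is bounded, so the hypothesis of the corollary implies that the resolvent difference $K(i):=(A+B-i)^{-1}-(A-i)^{-1}$ is itself compact; Weyl's theorem then gives $\sigma_{\rm ess}(A+B)=\sigma_{\rm ess}(A)$, and boundedness from below of $A+B$ follows from that of $A$. The spectrum of $A+B$ in $(a,b)$ is therefore discrete with accumulation only possibly at $a,b$, so I can pick $c$ (slightly perturbed if necessary) outside $\sigma(A)\cup\sigma(A+B)$; this perturbation modifies $\Pi$ only by a finite-rank projection and does not affect the conclusion. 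For (ii), $\Pi$ commutes with $A$, so taking the core $\cC=D(A)=D(A+B)$ one has $\Pi\cC\subset D(A+B)$, and essential self-adjointness of $\Pi(A+B)\Pi$ and $(1-\Pi)(A+B)(1-\Pi)$ on the respective subdomains follows because $A$ itself splits into two self-adjoint pieces on $\Pi\gH$ and $(1-\Pi)\gH$, while adding $B$ preserves self-adjointness on $D(A)$ by assumption.

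The heart of the proof is step (iii). Since both $A$ and $A+B$ are bounded from below, $\sigma(A)\cap(-\ii,c]$ and $\sigma(A+B)\cap(-\ii,c]$ are compact, so I can enclose both in a bounded closed contour $\gamma\subset\C$ avoiding the two spectra. The Dunford--Riesz representation of the spectral projector on a compact piece of spectrum yields
\[
\Pi-\chi_{(c,\ii)}(A+B)\,=\,-\frac{1}{2\pi i}\oint_\gamma K(z)\,dz,\qquad K(z):=(A+B-z)^{-1}-(A-z)^{-1}.
\]
A direct computation from the resolvent identity produces the factorization
\[
K(z)\;=\;\big(1+(z-i)(A+B-z)^{-1}\big)\,K(i)\,\big(1+(z-i)(A-z)^{-1}\big),
\]
in which the two outer factors are uniformly bounded on $\gamma$. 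I then write $K(i)=\big(K(i)|A-c|^{1/2}\big)|A-c|^{-1/2}$: by hypothesis the first factor is compact, and the second is bounded because $c\notin\sigma(A)$. Finally, $D(A)=D(A+B)$ together with the closed-graph theorem implies that the graph norms of $A$ and $A+B$ are equivalent, so $|A+B-c|^{1/2}\colon D(A)\to\gH$ is bounded and, by duality, $(A-z)^{-1}|A+B-c|^{1/2}$ extends to a bounded operator on $\gH$, uniformly in $z\in\gamma$. Assembling these estimates, $K(z)|A+B-c|^{1/2}$ extends to a compact operator on $\gH$ uniformly in $z\in\gamma$, and integrating over the bounded contour $\gamma$ gives (iii).

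The main obstacle will be precisely this chain of manipulations in (iii): propagating the compactness of $K(i)|A-c|^{1/2}$ to that of $K(z)|A+B-c|^{1/2}$ uniformly on $\gamma$. This requires the resolvent-identity factorization of $K(z)$ together with the (crucial) use of the equivalence of graph norms on $D(A)=D(A+B)$, in order to convert the hypothesis (formulated with the unbounded weight $|A-c|^{1/2}$) into the compactness statement needed with the weight $|A+B-c|^{1/2}$. Once (iii) is established, the corollary is a direct application of Theorem~\ref{thm:cond_no_pollute} to $A+B$ with the projector $\Pi$.
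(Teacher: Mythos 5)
Your proof is correct and follows essentially the same route as the paper: the same contour--integral representation of the projector difference and the same resolvent-identity factorization $K(z)=\frac{A+B-i}{A+B-z}\,K(i)\,\frac{A-i}{A-z}$, with $K(i)|A-c|^{1/2}$ compact by hypothesis, the outer factors uniformly bounded on the (bounded) contour, and semi-boundedness of $A$ entering exactly where you use it. The only point to tidy in your step (iii) is the rightmost factor, where the unbounded weight multiplies the identity term of $1+(z-i)(A-z)^{-1}$: since this operator commutes with $|A-c|^{\pm1/2}$, write $K(i)\big(1+(z-i)(A-z)^{-1}\big)|A+B-c|^{1/2}=\big(K(i)|A-c|^{1/2}\big)\big(1+(z-i)(A-z)^{-1}\big)\big(|A-c|^{-1/2}|A+B-c|^{1/2}\big)$, the last operator being bounded by graph-norm equivalence together with operator monotonicity of the square root (Heinz) rather than by duality alone --- equivalently, prove compactness with the weight $|A-c|^{1/2}$ as the paper does and convert at the very end.
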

\begin{proof}
Under our assumption we have that $(A+B-i)^{-1}-(A-i)^{-1}$ is compact, hence $\sigma_{\rm ess}(A+B)=\sigma_{\rm ess}(A)$ by Weyl's Theorem \cite{ReeSim4,Davies} and $A+B$ is also bounded from below. Changing $c$ if necessary we may assume that $c\notin\sigma(A+B)\cup\sigma(A)$. Next we take a curve $\curlC$ in the complex plane enclosing the whole spectrum of $A$ and $A+B$ below $c$ (i.e. intersecting the real axis only at $c$ and $c'<\inf\sigma(A)\cup\sigma(A+B)$). In this case, we have by Cauchy's formula and the resolvent identity
\begin{multline*}
\bigg(\Pi-\chi_{[c,\ii)}(A+B)\bigg)|A-c|^{1/2}=-\frac1{2i\pi}\oint_{\curlC}\left(\frac{1}{A+B-z}-\frac{1}{A-z}\right)|A-c|^{1/2}dz\\
=-\frac1{2i\pi}\oint_{\curlC}\frac{A+B-i}{A+B-z}\left(\frac{1}{A+B-i}-\frac{1}{A-i}\right)|A-c|^{1/2}\frac{A-i}{A-z}dz
\end{multline*}
Since $\curlC$ is bounded (we use here that $A$ is bounded-below), we easily deduce that the above operator is compact, hence the result follows from Theorem \ref{thm:cond_no_pollute}.
\end{proof}

 \begin{remark}\it
Again the power $1/2$ in $|A-c|^{1/2}$ is optimal, as seen by taking  $B=-A+\sum_{n}n|f_n^+\rangle\langle f_n^+|$ where $A$, $f_n^+$ and $\theta_n$ are chosen as in Remark  \ref{contre_exemple1} and $V_n:=\{e_1^\pm,...,e_{n-1}^\pm,e_n^-\}$.
 \end{remark}

 \begin{remark}\it
Corollary \ref{cor:no_pollute} is \emph{a priori} wrong when $A$ is not semi-bounded. This is seen by taking for instance $A=\sum_{n\geq1}n|e_n^+\rangle\langle e_n^+|-\sum_{n\geq1}n|e_n^-\rangle\langle e_n^-|$ and $B=-A+\sum_{n\geq1}n|f_n^+\rangle\langle f_n^+|-\sum_{n\geq1}n|f_n^-\rangle\langle f_n^-|$ where $f_n^+=e_n^+/\sqrt{2}+e_n^-/\sqrt{2}$ and $f_n^-=-e_n^-/\sqrt{2}+e_n^+/\sqrt{2}$. A short calculation shows that $\big((A+B)^{-1}-A^{-1}\big)|A|^{\alpha}$ is compact for all $0\leq\alpha<1$ whereas $0\in\Spu(A+B,\Pi)$ which is seen by choosing again $V_n=\{e_1^\pm,...,e_{n-1}^{\pm},e_n^-\}$.
 \end{remark}

\subsection{Applications}\label{sec:applications_splitting}

\subsubsection{Periodic Schrödinger operators in Wannier basis}\label{sec:Periodic}
In this section, we show that approximating eigenvalues in gaps of periodic Schrödinger operators using a so-called \emph{Wannier basis} does not yield any spectral pollution. This method was already successfully applied in dimension 1 in \cite{CanDelLew-08b} for a nonlinear model introduced in \cite{CanDelLew-08b}. For references on pollution in this setting, we refer for example to \cite{BouLev-07}.

Consider $d$ linearly independent vectors $a_1,...,a_d$ in $\R^d$ and denote by 
$$\curlL:=a_1\Z\oplus\cdots \oplus a_d\Z$$
the associated lattice. We also define the \emph{dual lattice} 
$$\curlL^*:=a^*_1\Z\oplus\cdots \oplus a^*_d\Z\ \text{ with }\ \pscal{a_i,a^*_j}=(2\pi)\delta_{ij}.$$
Finally, the Brillouin zone is defined by
$$\curlB:=\left\{x\in\R^d\ |\ \norm{x}=\inf_{k\in\curlL^*}\norm{x-k}\right\}.$$
Next we fix an $\curlL$-periodic potential $V_{\rm per}$, i.e. $V_{\rm per}(x+a)=V_{\rm per}(x)$ for all $a\in\curlL$. We will assume as usual \cite{ReeSim4} that
$$V_{\rm per}\in L^p(\curlB)\ \text{ where }\left\{\begin{array}{ll}
p=2 &\text{ if }d\leq3,\\
p>2 &\text{ if }d=4,\\
p=d/2 &\text{ if }d\geq5.
\end{array}\right.$$
In this case it is known \cite{ReeSim4} that the operator
\begin{equation}
A_{\rm per}=-\Delta+V_{\rm per}
\label{def_A_per}
\end{equation}
is self-adjoint on $H^2(\R^d)$. One has the \emph{Bloch-Floquet decomposition}
$$A_{\rm per}=\frac{1}{|\curlB|}  \int_{\curlB}^\oplus A_{\rm per}(\xi) \, d\xi$$
where $A_{\rm per}(\xi)$ is for almost all $\xi\in \curlB$ a self-adjoint operator acting on the space
$$L^2_\xi = \left\{ u \in L^2_{\rm   loc}(\R^3)\ |\ u(x+a) = e^{-i a \cdot \xi} u(x), \; \forall a \in \curlL \right\}.$$
For any $\xi$, the spectrum of $A_{\rm per}(\xi)$ is composed of a (nondecreasing) sequence of eigenvalues of finite multiplicity $\lambda_k(\xi)\nearrow\ii$, hence the spectrum
$$\sigma(A_{\rm per})=\sigma_{\rm ess}(A_{\rm per})=\bigcup_{k\geq1}\lambda_k(\curlB)$$
is composed of bands. The eigenvalues $\lambda_k(\xi)$ are known to be real-analytic in any fixed direction when $V_{\rm per}$ is smooth enough \cite{Thomas-73,ReeSim4}, in which case the spectrum of $A_{\rm per}$ is purely absolutely continuous.

The operator \eqref{def_A_per} may be used to describe quantum electrons in a crystal. It appears naturally for noninteracting systems in which case $V_{\rm per}$ is the periodic Coulomb potential induced by the nuclei of the crystal. However operators of the form \eqref{def_A_per} also appear in nonlinear models taking into account the interaction between the electrons. In this case, the potential $V_{\rm per}$ contains an additional effective (mean-field) potential induced by the electrons themselves \cite{CatBriLio-01,CanDelLew-08b}. In the presence of an impurity in the crystal, one is led to consider an operator of the form
\begin{equation}
A=-\Delta+V_{\rm per}+W.
\label{def_A_impurity}
\end{equation}
We will assume in the following that 
$$W\in L^p(\R^d)+L^\ii_\epsilon(\R^d)\ \text{ for some }\ p>\max(d/3,1)$$
in which case $(A_{\rm per}+W-i)^{-1}-(A_{\rm per}-i)^{-1}$ is $(1-\Delta)^{-1/2}$-compact as seen by the resolvent expansion \cite{ReeSim4}, and one has
$$\sigma_{\rm ess}(A)=\sigma(A_{\rm per}).$$
However eigenvalues may appear between the bands. Intuitively, they correspond to bound states of electrons (or holes) in presence of the defect. By Theorem \ref{thm:general}, their computation may lead to pollution. For a finite elements-type basis, spectral pollution was studied in \cite{BouLev-07}.

Using the Bloch-Floquet decomposition, a spectral decomposition of the reference periodic operator $A_{\rm per}$ is easily accessible numerically. This decomposition can be used as a starting point to avoid pollution for the perturbed operator $A$. For simplicity we shall assume that the spectral decomposition of $A_{\rm per}$ is known exactly. More precisely we make the assumption that there is a gap between the $k$th and the $(k+1)$st band:
$$a:=\sup\lambda_k(\curlB)<\inf\lambda_{k+1}(\curlB):=b$$
and that the associated spectral projector
$$P_{\rm per}:=\chi_{(-\ii,c)}(A_{\rm per}),\qquad c=\frac{a+b}{2}$$
is known. The interest of this approach is the following
\begin{theorem}[No pollution for periodic Schrödinger operators]\label{thm:no_poll_Wannier_basis}
We assume $V_{\rm per}$ and $W$ are as before. Then we have
\begin{equation}
\Spu(A,P_{\rm per})\cap(a,b)=\emptyset.
\label{no_pollution_periodic}
\end{equation}
\end{theorem}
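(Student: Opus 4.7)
The plan is to apply Corollary~\ref{cor:no_pollute}, taking its ``$A$'' to be the periodic operator $A_{\rm per}$ and its perturbation ``$B$'' to be $W$, so that the ``$A+B$'' of the corollary becomes $A=A_{\rm per}+W$. The hypotheses on the base operator are immediately satisfied: $A_{\rm per}$ is self-adjoint and bounded below on $H^2(\R^d)$, the assumption \eqref{assumption_a_b_bis} holds with $(a,b)$ separating the $k$th and $(k+1)$st bands of $A_{\rm per}$, and $c=(a+b)/2$ lies in the middle of the gap so $c\notin\sigma(A_{\rm per})$. The perturbation $W$ is symmetric and, under the stated $L^p+L^\ii_\epsilon$ assumption, is relatively compact with respect to $-\Delta$, hence the operator $A=A_{\rm per}+W$ is self-adjoint on $D(A_{\rm per})=H^2(\R^d)$ by Kato--Rellich.

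The only non-trivial step is to verify the compactness condition of the corollary, namely that $\bigl((A-i)^{-1}-(A_{\rm per}-i)^{-1}\bigr)|A_{\rm per}-c|^{1/2}$ extends to a compact operator on $\gH$. I would use the resolvent identity to rewrite this, on $D(|A_{\rm per}-c|^{1/2})$, as
\[
-(A-i)^{-1}\cdot W(A_{\rm per}-i)^{-1}\cdot (A_{\rm per}-i)^{-1}|A_{\rm per}-c|^{1/2}.
\]
The leftmost factor is bounded; the middle factor $W(A_{\rm per}-i)^{-1}$ is compact, which is essentially the same fact invoked earlier in the excerpt to derive $\sigma_{\rm ess}(A)=\sigma_{\rm ess}(A_{\rm per})$ via Weyl's theorem; and the rightmost factor is bounded by functional calculus for $A_{\rm per}$, since $\lambda\mapsto|\lambda-c|^{1/2}/(\lambda-i)$ is a bounded function on $\R$. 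The composition bounded $\circ$ compact $\circ$ bounded is compact, as required.

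Corollary~\ref{cor:no_pollute} then yields $\Spu(A,\Pi)\cap(a,b)=\emptyset$, where $\Pi=\chi_{(c,\ii)}(A_{\rm per})$. Finally, Definition~\ref{def_poll_P} is manifestly symmetric under swapping the roles of $P$ and $1-P$ (it only prescribes a splitting $V_n=V_n^+\oplus V_n^-$ with $V_n^+\subset P\gH$ and $V_n^-\subset(1-P)\gH$), so $\Spu(A,P_{\rm per})=\Spu(A,1-P_{\rm per})=\Spu(A,\Pi)$, and the claim follows. The only genuine subtlety is to invoke the corollary with base operator $A_{\rm per}$ rather than $A$, so that the weight $|A_{\rm per}-c|^{1/2}$ ends up on the side where it commutes with $(A_{\rm per}-i)^{-1}$; the rest is routine relative-compactness bookkeeping.
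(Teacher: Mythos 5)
Your proposal follows exactly the paper's route: the paper's entire proof of Theorem~\ref{thm:no_poll_Wannier_basis} is that it is ``a simple application of Corollary~\ref{cor:no_pollute}'' with base operator $A_{\rm per}$ and perturbation $W$, and your remark that $\Spu(A,P_{\rm per})=\Spu(A,1-P_{\rm per})$ correctly reconciles $P_{\rm per}=\chi_{(-\ii,c)}(A_{\rm per})$ with the corollary's $\Pi=\chi_{(c,\ii)}(A_{\rm per})$. One slip in the verification of the compactness hypothesis: the resolvent identity gives $\bigl((A-i)^{-1}-(A_{\rm per}-i)^{-1}\bigr)|A_{\rm per}-c|^{1/2}=-(A-i)^{-1}W(A_{\rm per}-i)^{-1}|A_{\rm per}-c|^{1/2}$, with a \emph{single} factor of $(A_{\rm per}-i)^{-1}$, whereas your displayed product contains two and therefore equals $-(A-i)^{-1}W(A_{\rm per}-i)^{-2}|A_{\rm per}-c|^{1/2}$, which is not the operator to be tested. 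The intended bounded--compact--bounded factorization still works after regrouping, e.g.\ as $\bigl[(A-i)^{-1}W(1-\Delta)^{-1/2}\bigr]\cdot\bigl[(1-\Delta)^{1/2}(A_{\rm per}-i)^{-1}|A_{\rm per}-c|^{1/2}\bigr]$, where the second bracket is bounded because $|\lambda-c|^{1/2}|\lambda-i|^{-1}\lesssim(1+|\lambda|)^{-1/2}$ sends $L^2$ into the form domain $H^1(\R^d)$; note also that under the stated $L^p+L^\ii_\epsilon$ hypothesis on $W$ it is this form-compact version (the fact the paper actually asserts when identifying $\sigma_{\rm ess}(A)$), rather than compactness of $W(A_{\rm per}-i)^{-1}$ itself, that is directly available.
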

\begin{proof}
This is a simple application of Corollary \ref{cor:no_pollute}.\end{proof}

It was noticed in \cite{CanDelLew-08b} that a very natural basis respecting the decomposition associated with $P_{\rm per}$ is given by a so-called \emph{Wannier basis} \cite{Wannier-37}. Wannier functions $\{w_k\}$ are defined in such a way that $w_k$ belongs to the spectral subspace associated with the $k$th band and $\{w_k(\cdot-a)\}_{a\in\curlL}$ forms a basis of this spectral subspace. One can take 
\begin{equation}
w_k(x)=\frac{1}{|\curlB|}  \int_{\curlB}u_k(\xi,x) d\xi
\label{def_Wannier_simple} 
\end{equation}
where $u_k(\xi,\cdot)\in L^2_\xi$ is for any $\xi\in\curlB$ an eigenvector of $A_{\rm per}(\xi)$ corresponding to the $k$th eigenvalue $\lambda_k(\xi)$. The so-defined $\{w_k(\cdot-a)\}_{a\in\curlL}$ are mutually orthogonal. Formula \eqref{def_Wannier_simple} does not define $w_k$ uniquely since the $u_k(\xi,x)$ are in the best case only known up to a phase. Choosing the right phase, one can prove that when the $k$th band is isolated from other bands, $w_k$ decays exponentially \cite{Nenciu-83}. 

More generally, instead of using only one band (i.e. one eigenfunction $u_k(\xi,x)$), one can use $K$ different bands for which it is possible to construct $K$ exponentially localized Wannier functions as soon as the union of the $K$ bands is isolated from the rest of the spectrum \cite{Panati-07,Brouder-etal-07}. The union of the $K$ bands is called a \emph{composite band}.

In our case we typically have a natural composite band corresponding to the spectrum of $A_{\rm per}$ which is below $c$, and another one corresponding to the spectrum above $c$ (the latter is not bounded above). By Theorem \ref{thm:no_poll_Wannier_basis}, we know that using such a basis will not create any pollution in the gap of $A$. 

We emphasize that the Wannier basis \emph{does not} depend on the decaying potential $W$, and can be precalculated once and for all for a given $\curlL$ and a given $V_{\rm per}$. Another huge advantage is that since $w_k$ decays fast, it will be localized over a certain number of unit cells of $\curlL$. As $W$ represents a localized defect in the lattice, keeping only the Wannier functions $w_k(\cdot-a)$ with $a\in\curlL\cap B(0,R)$ for some radius $R>0$ should already yield a very good approximation to the spectrum in the gap (we assume that the defect is localized in a neighborhood of $0$). This approximation can be improved by enlarging progressively the radius $R$.

Of course in practice exponentially localized Wannier functions are not simple to calculate. But some authors have defined the concept of \emph{maximally localized Wannier functions} \cite{MarVdB-97} and proposed efficient methods to find these functions numerically.

The efficiency of the computation of the eigenvalues of $A$ in the gap using a Wannier basis (compared to that of the so-called super-cell method) were illustrated for a nonlinear model in \cite{CanDelLew-08b}.

\subsubsection{Dirac operators in upper/lower spinor basis}\label{sec:Dirac1}
The Dirac operator is a differential operator of order 1 acting on $L^2(\R^3,\C^4)$, defined as \cite{Thaller,EstLewSer-08}
\begin{equation}
D^0=-ic\sum_{k=1}^3\alpha_k\partial_{x_k}+mc^2\beta:=c\alp\cdot p+mc^2\beta.
\end{equation}
Here $\alpha_1,\alpha_2,\alpha_3$ and $\beta$ are the so-called Pauli $4\times4$ matrices \cite{Thaller} 
which are chosen to ensure that 
$$(D^0)^2=-c^2\Delta+m^2c^4.$$
The usual representation in $2\times 2$ blocks is given by 
$$ \beta=\left( \begin{matrix} I_2 & 0 \\ 0 & -I_2 \\ \end{matrix} \right),\quad \; \alpha_k=\left( \begin{matrix}
0 &\sigma_k \\ \sigma_k &0 \\ \end{matrix}\right)  \qquad (k=1, 2, 3)\,,
$$
where the Pauli matrices are defined as
\begin{equation}
\sigma _1=\left( \begin{matrix} 0 & 1
\\ 1 & 0 \\ \end{matrix} \right),\quad  \sigma_2=\left( \begin{matrix} 0 & -i \\
i & 0 \\  \end{matrix}\right),\quad  \sigma_3=\left( 
\begin{matrix} 1 & 0\\  0 &-1\\  \end{matrix}\right) \, . 
\label{def:Pauli}
\end{equation}
In the whole paper we use the common notation $p=-i\nabla$.

The operator $D^0$ is self-adjoint on $H^1(\R^3,\C^4)$ and its spectrum is symmetric with respect to zero: $\sigma(D^0)=(-\ii,-mc^2]\cup[mc^2,\ii).$
An important problem is to compute eigenvalues of operators of the form
$$D^V=D^0+V$$
in the gap $(-mc^2,mc^2)$, where $V$ is a multiplication operator by a real function $x\mapsto V(x)$. Loosely speaking, positive eigenvalues correspond to bound states of a relativistic quantum electron in the external field $V$, whereas negative eigenvalues correspond to bound states of a positron, the anti-particle of the electron. In practice, spectral pollution is an important problem \cite{DraGol-81,Grant-82,Kutzelnigg-84,StaHav-84} which is dealt with in Quantum Physics and Chemistry by means of several different methods, the most widely used being the so-called \emph{kinetic balance} which we will study later in Section \ref{sec:kinetic_balance}. We refer to \cite{BouBou-08} for a recent numerical study based on the so-called \emph{second-order method} for the radial Dirac operator.

We now present a heuristic argument which can be made mathematically rigorous in many cases \cite{Thaller,EstLewSer-08}. First we write the equation satisfied by an eigenvector $(\phi,\chi)$ of $D^0+V$ with eigenvalue $mc^2+\lambda\in (-mc^2,mc^2)$ as follows:
\begin{equation}
\left\{\begin{array}{l}
(mc^2+V)\phi+c\sigma\cdot(-i\nabla)\chi=(mc^2+\lambda)\phi,\\
(-mc^2+V)\chi+c\sigma\cdot(-i\nabla)\phi=(mc^2+\lambda)\chi,
\end{array}\right.
\label{eq:eigenvalue_upper_lower}
\end{equation}
where we recall that $\sigma=(\sigma_1,\sigma_2,\sigma_3)$ are the Pauli matrices defined in \eqref{def:Pauli}. 
Hence one deduces that (when it makes sense)
\begin{equation}
\chi=\frac{c}{2mc^2+\lambda-V}\sigma\cdot(-i\nabla)\phi.
\label{relation_upper_lower}
\end{equation}
If $V$ and $\lambda$ stay bounded, we infer that, at least formally,
\begin{equation}
\left(\begin{array}{c}
\phi\\ \chi
\end{array}\right)\sim_{c\to\ii}\left(\begin{array}{c}
\phi\\ \frac{1}{2mc}\sigma\cdot(-i\nabla)\phi
\end{array}\right).
\label{limit_upper_lower_c} 
\end{equation}
Hence we see that in the nonrelativistic limit $c\to\ii$, the eigenvectors of $A$ associated with a positive eigenvalue converge to a vector of the form $\left(\begin{matrix} \phi\\ 0\end{matrix}\right)$. Reintroducing the asymptotic formula \eqref{limit_upper_lower_c} of $\chi$ in the first equation of \eqref{eq:eigenvalue_upper_lower}, one gets that $\phi$ is an eigenvector of the nonrelativistic operator $-\Delta/(2m)+V$ in $L^2(\R^3,\C^2)$.

For this reason, it is very natural to consider a splitting of the Hilbert space $L^2(\R^3,\C^4)$ into \emph{upper} and \emph{lower spinor} and we introduce the following orthogonal projector
\begin{equation}
\boxed{\cP\left(\begin{matrix} \phi\\ \chi\end{matrix}\right)=\left(\begin{matrix} \phi\\ 0\end{matrix}\right),\qquad \phi,\chi\in L^2(\R^3,\C^2).}
\label{def_P_upper_lower_spinor}
\end{equation}
This splitting is the choice of most of the methods we are aware of in Quantum Physics and Chemistry.
Applying Theorem \ref{thm:P}, we can characterize the spurious spectrum associated with this splitting. For simplicity we take $m=c=1$ in the following.

\begin{theorem}[Pollution in upper/lower spinor basis for Dirac operators]\label{thm:upper_lower_spinors}
Assume that the real function $V$ satisfies the following assumptions:
\begin{description}
 \item[$(i)$] there exist $\{R_k\}_{k=1}^M\subset\R^3$ and a positive number $r<\inf_{k\neq\ell}|R_k-R_\ell|/2$ such that
\begin{equation}
 \max_{k=1..K}\sup_{|x-R_k|\leq r}|x-R_k|\; |V(x)|<\frac{\sqrt{3}}2;
\label{assumption_V_1}
\end{equation}
\item[$(ii)$] one has\footnote{We use the notation of \cite{ReeSim4}: $X+L^\ii_\epsilon=\{f\in X+L^\ii\ |\ \forall\epsilon>0,\ \exists f_\epsilon\in X \text{ such that } \norm{f-f_\epsilon}_{L^\ii}\leq\epsilon\}$.}
\begin{equation}
V\1_{\R^3\setminus\cup_1^KB(R_k,r)}\in L^p(\R^3)+L^\ii_\epsilon(\R^3)\qquad \text{for some } 3<p<\ii.
\label{assumption_V_2}
\end{equation}
\end{description}
Let $\cP$ be as in \eqref{def_P_upper_lower_spinor}. Then one has
\begin{equation}
\overline{\Spu(D^0+V,\cP)}
=\left\{\Conv\left({\rm Ess}\big(1+V\big)\right)\cup \Conv\left({\rm Ess}\big(-1+V\big)\right)\right\}\cap[-1,1]
\end{equation}
where ${\rm Ess}(W)$ denotes the essential range of the function $W$, i.e.
$${\rm Ess}(W)=\big\{\lambda\in\R\ |\ \big|W^{-1}([\lambda-\epsilon,\lambda+\epsilon])\big|\neq0\ \forall\epsilon>0\big\}.$$
\end{theorem}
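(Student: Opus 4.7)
The plan is to apply Theorem~\ref{thm:P} to $A=D^0+V$ with $P=\cP$. Since $V$ multiplies all four components of a spinor by the same scalar, in the upper/lower block decomposition induced by $\cP$ the operator reads
$$D^0+V=\begin{pmatrix} (1+V)I_2 & \sigma\!\cdot\!p \\ \sigma\!\cdot\!p & (-1+V)I_2 \end{pmatrix},$$
so that the diagonal blocks $\cP(D^0+V)\cP$ and $(1-\cP)(D^0+V)(1-\cP)$ coincide, under the canonical identifications $\cP\gH\simeq(1-\cP)\gH\simeq L^2(\R^3,\C^2)$, with the scalar multiplication operators $M_{1+V}$ and $M_{-1+V}$. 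By the spectral theorem for multiplication operators,
$$\hat\sigma_{\rm ess}\bigl((D^0+V)_{|\cP\gH}\bigr)={\rm Ess}(1+V),\qquad \hat\sigma_{\rm ess}\bigl((D^0+V)_{|(1-\cP)\gH}\bigr)={\rm Ess}(-1+V),$$
and the ordering hypothesis~\eqref{assump_ordre} holds automatically because these two sets are respectively ${\rm Ess}(V)+1$ and ${\rm Ess}(V)-1$.

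To verify the remaining hypotheses of Theorem~\ref{thm:P}, I would pick the punctured core $\cC:=C_c^\infty(\R^3\setminus\{R_1,\dots,R_M\},\C^4)$. The strict inequality in~\eqref{assumption_V_1} sits just inside the Nenciu threshold for the Hardy--Dirac inequality, which, combined with the $L^p$-type decay of the tail from~\eqref{assumption_V_2} (for $p>3$ that tail is relatively $D^0$-compact), yields essential self-adjointness of $D^0+V$ on $\cC$ as well as compactness of the resolvent difference with $D^0$, giving $\hat\sigma_{\rm ess}(D^0+V)=(-\ii,-1]\cup[1,\ii)\cup\{\pm\ii\}$. The projected sets $\cP\cC$ and $(1-\cP)\cC$ are dense in the corresponding $L^2$ spaces and supported away from the singularities of $V$, so they are cores for $M_{\pm 1+V}$ by a standard truncate-and-mollify argument.

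With the hypotheses in place, Theorem~\ref{thm:P} yields
$$\overline{\Spu(D^0+V,\cP)}\cup\hat\sigma_{\rm ess}(D^0+V)=(-\ii,\sup{\rm Ess}(-1+V)]\cup[\inf{\rm Ess}(1+V),+\ii).$$
Now~\eqref{assumption_V_2} forces $V$ to be arbitrarily small on sets of infinite measure at infinity, so $0\in{\rm Ess}(V)$, and consequently $\inf{\rm Ess}(-1+V)\leq -1$ and $\sup{\rm Ess}(1+V)\geq 1$. Intersecting the right-hand side with the gap $[-1,1]$ therefore turns the two half-lines into $\Conv({\rm Ess}(-1+V))\cap[-1,1]$ and $\Conv({\rm Ess}(1+V))\cap[-1,1]$, which is the claimed identity (the endpoints $\pm 1$ belong to $\hat\sigma_{\rm ess}(A)$ and are absorbed into the closure on the left).

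The main obstacle lies at the Coulomb-type singularities: elementary Kato--Rellich perturbation breaks down at the critical constant $\sqrt{3}/2$, and one really needs the sharp Hardy--Dirac inequality both to obtain essential self-adjointness of $D^0+V$ on the punctured core and to rule out any contribution of the singular pieces of $V$ to the essential spectrum inside the gap $(-1,1)$. The block-algebraic reduction and the two core arguments for the diagonal multiplication operators are then routine.
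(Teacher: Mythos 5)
Your proof is correct and follows essentially the same route as the paper: both apply Theorem~\ref{thm:P} to the upper/lower block decomposition, identify the diagonal blocks with the multiplication operators $1+V$ and $-1+V$ acting on $L^2(\R^3,\C^2)$, verify the core/essential self-adjointness hypotheses, and use $0\in{\rm Ess}(V)$ to reduce the two half-lines to the stated intersection with the gap. The only cosmetic difference is your choice of the punctured core $C_c^\infty(\R^3\setminus\{R_1,\dots,R_M\},\C^4)$, where the paper simply takes $\cC=H^1(\R^3,\C^4)=D(D^0+V)$, which is stable under $\cP$.
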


\begin{remark}
It is known that the operator $D^0+V$ is essentially self-adjoint on $C_0^\ii(\R^3,\C^4)$ when \eqref{assumption_V_1} and \eqref{assumption_V_2} hold, and that its domain is simply the domain $H^1(\R^3,\C^4)$ of the free Dirac operator. When $\sqrt{3}/2$ is replaced by $1$ in \eqref{assumption_V_1}, the operator $D^0+V$ still has a \emph{distinguished} self-adjoint extension \cite{Thaller} whose associated domain satisfies $H^1(\R^3,\C^4)\subsetneq D(D^0+V)\subset H^{1/2}(\R^3,\C^4)$. Furthermore this domain is not stable by the projector $\cP$ on the upper spinor (a characterization of this domain was given in \cite{EstLos-07}). The generalization to this case is possible but it is outside the scope of this paper.
\end{remark}
\begin{remark}\label{rmk:contre-exemple}\it
By Theorem \ref{thm:upper_lower_spinors}, we see that ${\rm Spu}(D^0,\cP)=\emptyset$ but $\Spu(D^0+V,\cP)\neq\emptyset$ for all smooth potentials $V\neq0$ even if $V$ is $D^0$-compact. Hence spectral pollution is in general not stable under \emph{relatively compact perturbations} (but it is obviously stable under \emph{compact perturbations} as we have already mentioned in Remark \ref{rmk:compact-perturb}).
\end{remark}

Our assumptions on $V$ cover the case of the Coulomb potential, $V(x)=\kappa|x|^{-1}$ when $|\kappa|<\sqrt{3}/2$.
In our units, this corresponds to nuclei which have less than $118$ protons, which covers all existing atoms. On the other hand, a typical example for which $V\in  L^p(\R^3)\cap L^\ii(\R^3)$ is the case of smeared nuclei $V=\rho\ast1/|x|$ where $\rho$ is a (sufficiently smooth) distribution of charge for the nuclei.
We now give the proof of Theorem \ref{thm:upper_lower_spinors}:

\begin{proof}
Under assumptions $(i)$ and $(ii)$, it is known that $D^0+V$ is self-adjoint on $H^1(\R^3,\C^4)$ (which is stable under the action of $\cP$) and that $\sigma_{\rm ess}(D^0+V)=(-\ii,-1]\cup[1,\ii)$. We will simply apply Theorem \ref{thm:P} with $\cC=H^1(\R^3,\C^4)$.
We have in the decomposition of $L^2(\R^3)$ associated with $\cP$,
$$D^0+V=\left(\begin{matrix}
1+V & \sigma\cdot (-i\nabla)\\
\sigma\cdot (-i\nabla) & -1+V
\end{matrix}\right).$$ 
Hence $\cP(D^0+V)\cP=1+V$ and $(1-\cP)(D^0+V)(1-\cP)=-1+V$, both seen as operators acting on $L^2(\R^3,\C^2)$. It is clear that $D(D^0+V)\cap \cP L^2(\R^3,\C^4)\simeq H^1(\R^3,\C^2)$ is dense in the domain of the multiplication operator by $V(x)$
$$D(V)=\{f\in L^2(\R^3,\C^2)\ |\ Vf\in L^2(\R^3,\C^2)\},$$ 
for the associated norm
$$\norm{f}_{G(V)}^2=\int_{\R^3}\left(1+|V(x)|^2\right)|f(x)|^2dx.$$
Also the spectrum of $V$ is the essential range of $V$. Note under our assumptions on $V$ we have that $0\in{\rm Ess}(V)$. The rest follows from Theorem \ref{thm:P}.
\end{proof}

\subsubsection{Dirac operators in dual basis}\label{sec:Dirac_dual}
In this section we study a generalization of the decomposition into upper and lower spinors, which was introduced by Shabaev et al \cite{Shaetal-04}. For any fixed $\epsilon$, we consider the unitary operator
\begin{equation}
U_\epsilon:=\frac{D^0(\epsilon p)}{|D^0(\epsilon p)|} 
\label{def_U_epsilon}
\end{equation}
which is just a dilation of the sign of $D^0$ (note that $(U_\epsilon)^*=U_\epsilon$). Next we define the following orthogonal projector
\begin{equation}
\cP_\epsilon:=U_\epsilon \cP U_\epsilon
\label{def_P_epsilon} 
\end{equation}
where $\cP$ is the projector on the upper spinors as defined in \eqref{def_P_upper_lower_spinor}. As for $\epsilon=0$ we have $U_0=1$, we deduce that $\cP_0=\cP$. However, as we will see below, the limit $\epsilon\to0$ seems to be rather singular from the point of view of spectral pollution. We note that any vector in $\cP_\epsilon L^2(\R^3,\C^4)$ may be written in the following simple form
$$\left(\begin{array}{c}
\phi\\ \epsilon\sigma\cdot(-i\nabla)\phi
\end{array}\right)\quad\text{with}\quad \phi\in H^1(\R^3,\C^2).$$
Hence for $\epsilon\ll1$, the above choice just appears as a kind of correction to the simple decomposition into upper and lower spinors. Also we notice that $\cP_\epsilon H^1(\R^3,\C^4)\subset H^1(\R^3,\C^4)$ for every $\epsilon$ since $U_\epsilon$ is a multiplication operator in Fourier space and $\cP$ stabilizes $H^1(\R^3,\C^4)$.

In \cite{Shaetal-04}, the projector $\cP_\epsilon$ is considered with $\epsilon=1/(2mc)$ as suggested by Equation \eqref{limit_upper_lower_c}. However here we will for convenience let $\epsilon$ free. The method was called ``\emph{dual}'' in \cite{Shaetal-04} since contrarily to the ones that we will study later on (the \emph{kinetic} and \emph{atomic balance} methods), the two subspaces $\cP_\epsilon L^2(\R^3,\C^4)$ and $(1-\cP_\epsilon) L^2(\R^3,\C^4)$ play a symmetric role. For this reason, the dual method was suspected to avoid pollution in the whole gap and not only in the upper part. Our main result is the following (let us recall that $m=c=1$):
\begin{theorem}[Pollution in dual basis]\label{thm:no_poll_dual_basis}
Assume that the real function $V$ satisfies the following assumptions:
\begin{description}
 \item[$(i)$] there exist $\{R_k\}_{k=1}^M\subset\R^3$ and a positive number $r<\inf_{k\neq\ell}|R_k-R_\ell|/2$ such that
\begin{equation}
 \max_{k=1..K}\sup_{|x-R_k|\leq r}|x-R_k|\; |V(x)|<\frac{\sqrt{3}}2;
\label{assumption_V_1_bis}
\end{equation}
\item[$(ii)$] one has
\begin{equation}
V\1_{\R^3\setminus\cup_1^KB(R_k,r)}\in L^p(\R^3)\cap L^\ii(\R^3)\qquad \text{for some } 3<p<\ii.
\label{assumption_V_2_bis}
\end{equation}
\end{description}
Let $0<\epsilon\leq1$ and $\cP_\epsilon$ as defined in \eqref{def_P_epsilon}. Then one has\footnote{Recall that $[a,b]=\emptyset$ if $b<a$.}
\begin{equation}
\overline{\Spu(D^0+V,\cP_\epsilon)}
=\left[-1\; ,\; \min\left\{-\frac2\epsilon+1+\sup V\;,\;1\right\}\right]\cup\left[\max\left\{-1\;,\;\frac2\epsilon-1+\inf V\right\}\; ,\; 1\right].
\label{Spu_P_epsilon}
\end{equation}
\end{theorem}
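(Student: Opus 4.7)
The proof applies Theorem~\ref{thm:P} to $A=D^0+V$, $P=\cP_\epsilon$, with core $\cC=H^1(\R^3,\C^4)$, which is stable under $\cP_\epsilon$ since $U_\epsilon$ is a bounded Fourier multiplier preserving $H^1$. As $\cP_\epsilon=U_\epsilon\cP U_\epsilon$ with $U_\epsilon$ unitary, the restricted operator $\cP_\epsilon(D^0+V)\cP_\epsilon$ on $\cP_\epsilon L^2$ is unitarily equivalent, via $U_\epsilon$, to $\cP[U_\epsilon(D^0+V)U_\epsilon]\cP$ on $\cP L^2\simeq L^2(\R^3,\C^2)$. Writing $U_\epsilon$ in the $2\times 2$ block form
\[
U_\epsilon=\begin{pmatrix}f(p) & g(p)\\ g(p) & -f(p)\end{pmatrix},\qquad f(p):=(1+\epsilon^2|p|^2)^{-1/2},\ \ g(p):=\epsilon\,\sigma\cdot p\,f(p),
\]
a direct computation using $(\sigma\cdot p)^2=|p|^2$ and the anticommutation $\{\alp\cdot p,\beta\}=0$ gives
\[
U_\epsilon D^0 U_\epsilon=m(p)\,\beta+n(p)\,\alp\cdot p,\qquad m(p)=\frac{1+(2\epsilon-\epsilon^2)|p|^2}{1+\epsilon^2|p|^2},
\]
while both diagonal blocks of $U_\epsilon V U_\epsilon$ coincide with $T_\epsilon(V):=fVf+gVg$. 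The two operators whose essential spectra must be identified are therefore
\[
H^+_\epsilon:=m(p)+T_\epsilon(V),\qquad H^-_\epsilon:=-m(p)+T_\epsilon(V),
\]
both acting on $L^2(\R^3,\C^2)$. Since $m$ is monotone in $|p|^2$ with $m(0)=1$ and $m(p)\to 2/\epsilon-1$ as $|p|\to\ii$, one has $\sigma(m(p))=[1,2/\epsilon-1]$ (reduced to $\{1\}$ when $\epsilon=1$).

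The central algebraic observation is the identity $f^2+g^2=1$, which forces $T_\epsilon(c)=c$ for every constant $c$ and hence $T_\epsilon(V)-V=[f,V]f+[g,V]g$. From this I would first establish
\[
[1,2/\epsilon-1]\cup\{2/\epsilon-1+v:v\in{\rm Ess}(V)\}\ \subset\ \sigma_{\rm ess}(H^+_\epsilon),
\]
by two families of Weyl sequences. For $v_0\in{\rm Ess}(V)$ and a point $x_0$ where $V$ takes values arbitrarily close to $v_0$ on sets of positive measure, pick $\phi\in C^\ii_0(\R^3,\C^2)$ with $\|\phi\|=1$ supported in a tiny ball about $0$, fix a unit vector $e\in\R^3$, and set $u_n(x):=\phi(x-x_0)e^{ine\cdot x}$. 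Then $u_n\wto 0$ in $L^2$ (because $\widehat{u_n}$ is a translate of $\widehat{\phi}$ by $ne$), and using $m(ne)\to 2/\epsilon-1$, $f(ne)\to 0$, $g(ne)\to\sigma\cdot e$ together with $Vu_n\approx v_0 u_n$ and $(\sigma\cdot e)^2=1$, a short computation shows $(H^+_\epsilon-(2/\epsilon-1+v_0))u_n\to 0$. For $\mu_0=m(p_0)\in[1,2/\epsilon-1]$ one localizes instead at bounded momentum near $p_0$ and pushes the position centre to infinity, so that $T_\epsilon(V)u_n\to 0$ by the decay of $V$ and $\mu_0$ appears in the essential spectrum. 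The symmetric argument yields the analogous lower bound for $\sigma_{\rm ess}(H^-_\epsilon)$, involving $-m(p)$ in place of $m(p)$.

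The matching upper bound is obtained by showing that $T_\epsilon(V)-V=[f,V]f+[g,V]g$ is a relatively compact perturbation, so that $\sigma_{\rm ess}(H^\pm_\epsilon)=\sigma_{\rm ess}(\pm m(p)+V)$; the latter is then pinned down by a Persson-type localization separating the bounded-$p$ regime (where one sees $\pm 1+V$ at spatial infinity, contributing $[1,2/\epsilon-1]$ resp.\ $[-2/\epsilon+1,-1]$) from the large-$p$ regime (where one sees $\pm(2/\epsilon-1)+V$, contributing the $V$-shifted essential range). The ordering hypothesis $\inf\hat\sigma_{\rm ess}(H^-_\epsilon)\leq\inf\hat\sigma_{\rm ess}(H^+_\epsilon)$ required by Theorem~\ref{thm:P} is then immediate from this description. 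Intersecting the two intervals supplied by Theorem~\ref{thm:P} with the complement of $\hat\sigma_{\rm ess}(D^0+V)=(-\ii,-1]\cup[1,\ii)$ in $(-1,1)$ produces exactly the set appearing in~\eqref{Spu_P_epsilon}.

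The main obstacle is proving the relative compactness of the commutator correction $T_\epsilon(V)-V$ when $V$ carries the Coulomb-type local singularities permitted by hypothesis~$(i)$. The case $V\in L^\ii$ is routine, but the borderline singular case $|V|\sim\kappa/|x-R_k|$ with $\kappa<\sqrt 3/2$ requires careful pseudodifferential commutator estimates for $[f,V]$ and $[g,V]$, exploiting the Hardy--Kato inequality together with the $\sqrt 3/2$ threshold encoded in~\eqref{assumption_V_1_bis}. This is the only step genuinely harder than in Theorem~\ref{thm:upper_lower_spinors}, where the absence of $U_\epsilon$ makes the restricted operators simply multiplication by $\pm 1+V$ and the essential range of $V$ is read off immediately.
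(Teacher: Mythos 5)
Your setup is correct and matches the paper's: conjugating by $U_\epsilon$ reduces the problem to identifying the essential spectra of the two operators $\pm m(p)+fVf+gVg$ on $L^2(\R^3,\C^2)$ (your $m(p)$ and the block structure of $U_\epsilon D^0U_\epsilon$ agree with the paper's formulas \eqref{def_op_DKB1}--\eqref{def_op_DKB2}), and then to invoke Theorem~\ref{thm:P}. The gap is in your treatment of the upper bound on these essential spectra. You propose to show that $T_\epsilon(V)-V=[f,V]f+[g,V]g$ is relatively compact and then to localize $\sigma_{\rm ess}(\pm m(p)+V)$ by a Persson-type argument, but you never prove the compactness and you yourself flag it as ``the main obstacle.'' It is not a technicality: $g$ is a zeroth-order (Riesz-transform-like) multiplier, so $[g,V]$ is of Calder\'on--Zygmund commutator type; its boundedness/compactness requires $BMO$/$CMO$-type regularity of $V$ that is \emph{not} implied by hypothesis \eqref{assumption_V_2_bis} ($V\in L^p\cap L^\ii$ with no smoothness), and for the Coulomb-type singularities permitted by \eqref{assumption_V_1_bis} a scaling argument at $R_k$ shows $[g,V]g$ is not even obviously bounded. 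Moreover, even granting that step, the ``Persson-type localization'' of $\sigma_{\rm ess}(m(p)+V)$ is a genuine phase-space (not merely spatial) localization for a bounded function of $p$ plus a singular potential, which you only sketch. A smaller but real omission: Theorem~\ref{thm:P} requires (essential) self-adjointness of the restricted operators on the chosen core, and since $gVg$ is unbounded for Coulomb $V$ this has to be established (the paper devotes the first part of Lemma~\ref{lem:prop_DKB} to it).

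The paper sidesteps all of this with a decomposition you did not exploit: writing $A_1=1+fVf+g\left(\tfrac2\epsilon-2+V\right)g$, the term $fVf$ is compact by Kato--Seiler--Simon (because $f$ is genuinely smoothing, unlike $g$), and the remaining term $gWg$ with $W=\tfrac2\epsilon-2+V$ is controlled \emph{without any commutator} by the elementary two-sided operator inequality $\min\{0,\inf W\}\,\le\, gWg\,\le\,\max\{0,\sup W\}$, valid since $0\le g^*g\le1$; the matching lower bound on the essential spectrum comes from concentrating Weyl sequences built on eigenvectors of $\sigma\cdot p/|p|$ at Lebesgue points of $V$ (note that your modulated sequences $\phi(x-x_0)e^{ine\cdot x}$ with fixed profile only reach $\tfrac2\epsilon-1+\langle V\phi,\phi\rangle$; you still need to shrink supports to hit $\tfrac2\epsilon-1+v_0$ for $v_0\in{\rm Ess}(V)$). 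If you keep $V$ sandwiched between the two factors of $g$ rather than trying to commute it out, the hard analytic step in your proposal disappears entirely.
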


Our result shows that contrarily to the decomposition into upper and lower spinors studied in the previous section, the use of $\cP_\epsilon$ indeed allows to avoid spectral pollution under the condition that $V$ is a bounded potential and that $\epsilon$ is small enough:
$$\epsilon\leq\frac{2}{2+|V|}.$$
This mathematically justifies a claim of \cite{Shaetal-04}.
However we see that for Coulomb potentials, we will again get pollution in the whole gap, independently of the choice of $\epsilon$. Also for large but bounded potentials (like the ones approximating a Coulomb potential), one might need to take $\epsilon$ so small  that this could give rise to a numerical instability.

\begin{proof}
We will again apply Theorem  \ref{thm:P}. 
We choose $\cC=U_\epsilon C_0^\ii(\R^3,\C^4)$. Note that $\cC$ is a core for $D^0+V$ (its domain is simply $H^1(\R^3,\C^4)$) and that $\cP_\epsilon\cC\subset \cap_{s\geq0}H^s(\R^3,\C^4)$ since $\cP_\epsilon$ and $U_\epsilon$ commute with the operator $p=-i\nabla$. An easy computation yields
\begin{multline}
U_\epsilon(D^0+V)_{|\cP_\epsilon\cC}U_\epsilon\simeq 1+\frac{1}{\sqrt{1+\epsilon^2|p|^2}}V\frac{1}{\sqrt{1+\epsilon^2|p|^2}}\\
+\frac{\epsilon\sigma\cdot p}{\sqrt{1+\epsilon^2|p|^2}}\left(\frac2\epsilon-2+V\right)\frac{\epsilon\sigma\cdot p}{\sqrt{1+\epsilon^2|p|^2}}:=A_1 \label{def_op_DKB1}
\end{multline}
and
\begin{multline}
U_\epsilon(D^0+V)_{|(1-\cP_\epsilon)\cC}U_\epsilon\simeq-1+\frac{1}{\sqrt{1+\epsilon^2|p|^2}}V\frac{1}{\sqrt{1+\epsilon^2|p|^2}}\\
+\frac{\epsilon\sigma\cdot p}{\sqrt{1+\epsilon^2|p|^2}}\left(-\frac2\epsilon+2+V\right)\frac{\epsilon\sigma\cdot p}{\sqrt{1+\epsilon^2|p|^2}}:=A_2.\label{def_op_DKB2}
\end{multline}
Strictly speaking these operators should be defined on $\cP U_\epsilon\cC$ and $(1-\cP)U_\epsilon\cC$ but we have made the identification $\cP U_\epsilon\cC\simeq(1-\cP)U_\epsilon\cC\simeq C^\ii_0(\R^3,\C^2)$. Let us remark that for $\epsilon>0$ the term $K:=(1+\epsilon^2|p|^2)^{-1/2}V(1+\epsilon^2|p|^2)^{-1/2}$ is indeed compact under our assumptions on $V$, hence it does not contribute to the polluted spectrum. On the other hand for $\epsilon=0$ it is the only term yielding pollution as we have seen before.

Theorem \ref{thm:no_poll_dual_basis} is then a consequence of Theorem \ref{thm:P} and of the following
\begin{lemma}[Properties of $(D^0+V)_{|\cP_\epsilon\cC}$ and $(D^0+V)_{|(1-\cP_\epsilon)\cC}$]\label{lem:prop_DKB}
The operators $A_1$ and $A_2$ defined in \eqref{def_op_DKB1} and \eqref{def_op_DKB2} are self-adjoint on the domain
$$\cD:=\left\{\phi\in L^2(\R^3,\C^2)\ |\ V(\sigma\cdot p)(1+\epsilon^2|p|^2)^{-1/2}\phi\in L^2(\R^3,\C^2)\right\}.$$
They are both essentially self-adjoint on $C^\ii_0(\R^3,\C^2)$. Moreover, we have
\begin{multline*}
\Conv\;{\rm Ess}\left(\frac2\epsilon-1+V\right)\subseteq\Conv\; \sigma_{\rm ess}\left(A_1\right)\subseteq\\
\subseteq\left[\min\left\{1\, ,\,\frac2\epsilon-1+\inf V\right\}\; ,\; \max\left\{1\, ,\,\frac2\epsilon-1+\sup V\right\}\right]
\end{multline*}
and
\begin{multline*}
\Conv\;{\rm Ess}\left(-\frac2\epsilon+1+V\right)\subseteq\Conv\; \sigma_{\rm ess}\left(A_2\right)\subseteq\\
\subseteq\left[\min\left\{-1\, ,\,-\frac2\epsilon+1+\inf V\right\}\; ,\; \max\left\{-1\, ,\,-\frac2\epsilon+1+\sup V\right\}\right]. 
\end{multline*}
\end{lemma}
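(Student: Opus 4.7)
The plan is to exploit the quadratic-form identity
\[
\langle A_1\phi,\phi\rangle\;=\;\langle(1+V)S\phi,S\phi\rangle\;+\;\langle(2/\epsilon-1+V)T\phi,T\phi\rangle,
\]
obtained from \eqref{def_op_DKB1} by writing $1=S^2+T^2$, where $S=(1+\epsilon^2|p|^2)^{-1/2}$ and $T=\epsilon(\sigma\cdot p)S$; the analogue for $A_2$ replaces $1$ and $2/\epsilon-1$ by $-1$ and $-2/\epsilon+1$. This identity is the engine for both the self-adjointness statement and the essential-spectrum inclusions.

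\smallskip

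\emph{Self-adjointness and essential self-adjointness.} Under (i)-(ii) of Theorem~\ref{thm:no_poll_dual_basis}, Hardy's inequality with the strict bound $\kappa<\sqrt{3}/2$ controls the Coulomb cores while the Sobolev embedding (valid since $p>3$) handles the tails; together they show that $V$ is form-bounded by $1+\epsilon^2|p|^2$ with relative bound strictly less than $1$. The right-hand side of the identity is therefore a closed semibounded quadratic form on $H^1(\R^3,\C^2)$, and KLMN furnishes a unique self-adjoint operator. Since $S$, $T^*$ and $SVS$ are bounded on $L^2$, inspection of \eqref{def_op_DKB1} shows the operator domain is characterized by $T^*VT\phi\in L^2$; by boundedness of $T^*$ this is exactly $\cD=\{\phi:VT\phi\in L^2\}$. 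A truncation-and-mollification argument approximates any $\phi\in\cD$ by $C_0^\infty$-vectors in the graph norm, yielding essential self-adjointness on $C_0^\infty(\R^3,\C^2)$. The same argument works for $A_2$.

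\smallskip

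\emph{Lower inclusion.} Fix $\mu\in\mathrm{Ess}(V)$, choose a Lebesgue point $x_0$ of $V$ with $V(x_0)=\mu$, and pick $\eta\in C_0^\infty(\R^3)$ with $\|\eta\|_{L^2}=1$, $\xi\in\R^3\setminus\{0\}$, and a unit spinor $\chi\in\C^2$. The concentrating oscillatory sequence
\[
\phi_n(x)\;=\;n^{3/2}\eta\bigl(n(x-x_0)\bigr)\,e^{in^2\xi\cdot x}\,\chi
\]
satisfies $\|\phi_n\|=1$ and $\phi_n\rightharpoonup 0$ in $L^2$. The Fourier concentration at $n^2\xi$ (with $|n^2\xi|\to\infty$) forces $S\phi_n\to 0$ and $T\phi_n-(\sigma\cdot\hat\xi)\phi_n\to 0$ in $L^2$, while the Lebesgue-point property yields $V\phi_n-\mu\phi_n\to 0$ in $L^2$. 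Substituting into \eqref{def_op_DKB1} and using $(\sigma\cdot\hat\xi)^2=1$ delivers $(A_1-(2/\epsilon-1+\mu))\phi_n\to 0$, so $2/\epsilon-1+\mu\in\sigma_{\rm ess}(A_1)$. Taking convex hulls over $\mu$ gives the first inclusion; the same sequence, plugged into the $A_2$ identity, gives $-2/\epsilon+1+\mu\in\sigma_{\rm ess}(A_2)$.

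\smallskip

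\emph{Upper inclusion and main obstacle.} The crucial input is that, for every sequence $\phi_n\rightharpoonup 0$ with $\|\phi_n\|=1$,
\[
\langle VS\phi_n,S\phi_n\rangle\longrightarrow 0.
\]
Decomposing $V=V_{\rm reg}+V_{\rm sing}$ with $V_{\rm reg}=V\1_{\R^3\setminus\cup_k B(R_k,r)}$ and $V_{\rm sing}=V\1_{\cup_k B(R_k,r)}$, the multiplier $V_{\rm reg}S$ is compact on $L^2$ by hypothesis (ii) together with a Birman--Solomyak / Kato--Cwikel estimate (applicable because $p>3$), so $\langle V_{\rm reg}S\phi_n,S\phi_n\rangle\to 0$. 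For the Coulomb cores, the pointwise bound $|V_{\rm sing}|\le\kappa/|x-R_k|$, the local square-integrability of $|x-R_k|^{-1}$ in $\R^3$, and the compact embedding $H^1\hookrightarrow L^4_{\rm loc}$ give via Cauchy--Schwarz
\[
\int_{B(R_k,r)}|V_{\rm sing}||S\phi_n|^2\;\le\;\kappa\,\bigl\|\,|x-R_k|^{-1}\bigr\|_{L^2(B(R_k,r))}\,\|S\phi_n\|_{L^4(B(R_k,r))}^2\;\longrightarrow 0,
\]
since $S\phi_n$ is bounded in $H^1$ and tends to $0$ strongly in $L^4_{\rm loc}$ by weak convergence plus Rellich. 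This compactness step is the main technical point of the proof. Granted it, set $t_n:=\|T\phi_n\|^2$ and extract a subsequence with $t_n\to\tau\in[0,1]$. Using $\|S\phi_n\|^2=1-t_n$ and the pointwise bracket $\inf V\le V\le\sup V$, the identity yields
\[
\lim_n\langle A_1\phi_n,\phi_n\rangle\in\bigl[1+(2/\epsilon-2+\inf V)\tau,\;1+(2/\epsilon-2+\sup V)\tau\bigr],
\]
and as $\tau$ runs through $[0,1]$ the union of these intervals is exactly $[\min\{1,2/\epsilon-1+\inf V\},\,\max\{1,2/\epsilon-1+\sup V\}]$, proving the upper inclusion for $A_1$. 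The upper inclusion for $A_2$ is obtained by the same analysis after substituting $1\mapsto-1$ and $2/\epsilon-1+V\mapsto-2/\epsilon+1+V$.
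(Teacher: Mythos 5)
Your overall strategy coincides with the paper's: both rest on the decomposition $A_1=1+SVS+T(2/\epsilon-2+V)T$ with $S^2+T^2=1$ (the paper writes $T=\cL_\epsilon$ and discards $SVS$ as compact at the outset), both obtain the lower inclusion from Weyl sequences pushed to high frequency so that $S\to0$ and $T$ becomes approximately unitary, and both get the upper inclusion from the compactness of $SVS$ together with pointwise bounds on $V$ inside the $T(\cdots)T$ term. Your modulated sequence $n^{3/2}\eta(n(x-x_0))e^{in^2\xi\cdot x}\chi$ plays the same role as the paper's concentrating sequence built on the $+1$ eigenspace of $\sigma\cdot p/|p|$, and your bracketing of $\lim_n\langle A_1\phi_n,\phi_n\rangle$ over $\tau=\lim_n\|T\phi_n\|^2$ replaces the paper's operator inequalities $A\le\sup(W)\cL_\epsilon^2$ and $\cL_\epsilon^2\le1$; these differences are cosmetic.

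Two steps are not justified as written. First, in the self-adjointness part, the claim that the domain ``is characterized by $T^*VT\phi\in L^2$; by boundedness of $T^*$ this is exactly $\cD$'' is backwards: boundedness of $T^*$ only gives $\cD\subseteq\{\phi:\,TVT\phi\in L^2\}$, whereas the inclusion you need, $D(A_1)\subseteq\cD$, requires showing that $TVT\phi\in L^2$ (in the adjoint sense) forces $VT\phi\in L^2$ — this is exactly what the paper's Fourier argument (via $g=\chi VT\phi\in L^1$, hence $\widehat g\in L^\ii$, then $\widehat g\in L^2$) is there to prove. Under hypotheses (i)--(ii) one can in fact shortcut the issue, since $|V|\lesssim|x-R_k|^{-1}$ near the singularities and $V$ bounded elsewhere give, via Hardy, that $VT$ is bounded on $L^2$, so $\cD=L^2$; but that observation is not a consequence of the reasoning you wrote. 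Second, in the lower inclusion, ``substituting into \eqref{def_op_DKB1} delivers $(A_1-(2/\epsilon-1+\mu))\phi_n\to0$'' silently uses that $SV(S\phi_n)\to0$ and $TV\bigl(T\phi_n-(\sigma\cdot\hat\xi)\phi_n\bigr)\to0$ in $L^2$. Since $V$ has Coulomb singularities and these error terms are small only in $L^2$ — for your sequence $\|\nabla(T\phi_n-(\sigma\cdot\hat\xi)\phi_n)\|\sim n$ blows up — this is not automatic; it does follow from the Hardy bound $\|Vu\|_{H^{-1}}\le C\|u\|_{L^2}$ combined with the boundedness of $S,T:H^{-1}\to L^2$, but some such argument must be supplied. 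With these two points repaired, the proof is complete and essentially identical in substance to the paper's.
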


\begin{proof}
The operator $K=(1+\epsilon^2|p|^2)^{-1/2}V(1+\epsilon^2|p|^2)^{-1/2}$ being compact, it suffices to prove the statement for $\cL_\epsilon(-2/\epsilon+2+V)\cL_\epsilon$, where we have introduced the notation $\cL_\epsilon:=\epsilon\sigma\cdot p(1+\epsilon^2|p|^2)^{-1/2}$. The argument is exactly similar for $\cL_\epsilon(2/\epsilon-2+V)\cL_\epsilon$. We denote $W:=-2/\epsilon+2+V$ and we introduce $A=\cL_\epsilon W\cL_\epsilon$ which is a symmetric operator defined on $\cD$. We also note that $\cD$ is dense in $L^2$.

Let $f\in D(A^*)$, i.e. such that $|\pscal{f,\cL_\epsilon W\cL_\epsilon\phi}|\leq C\norm{\phi}$, $\forall\phi\in \cD$. We introduce $\chi:=\1_{\cup_{k=1}^MB(R_k,r)}$, a localizing function around the singularities of $V$, and we recall that $V$ is bounded away from the $R_k$'s. Hence we also have $|\pscal{f,\cL_\epsilon \chi W\cL_\epsilon\phi}|\leq C'\norm{\phi}$ for all $\phi\in \cD$. Then we notice that under our assumptions on $V$, we have $W\chi\in L^2$, hence $g:=\chi W\cL_\epsilon f\in L^1$ and $\widehat{g}\in L^\ii$. In Fourier space the property $|\int\widehat{g}\widehat{\cL_\epsilon\phi}|\leq C'\norm{\phi}$ for all $\phi$ in a dense subspace of $L^2$ means that $\epsilon\sigma\cdot p(1+\epsilon^2|p|^2)^{-1/2}\widehat{g}(p)\in L^2$, hence $\widehat{g}\in L^2(\R^3\setminus B(0,1))$. As by construction $\widehat{g}\in L^\ii$, we finally deduce that $\widehat{g}\in L^2$, hence $W\cL_\epsilon f\in L^2$. We have proven that $D(A^*)\subseteq \cD$, hence $A$ is self-adjoint on $\cD$. The essential self-adjointness is easily verified.

The next step is to identify the essential spectrum of $A$. We consider a smooth normalized function $\zeta\in C^\ii_0(\R^3,\R)$ and we introduce $\phi_1=(1+\sigma\cdot p/|p|)(\zeta,0)$.
We notice that 
$\phi_1\in H^s(\R^3,\C^2)$ for all $s>0$. Then we let $\phi_n(x):=n^{3/2}\phi_1(n(x-x_0))$ and note that $(\sigma\cdot p/|p|)\phi_n=\phi_n$. We take for $x_0\in\R^3$ some fixed Lebesgue point of $V$, i.e. such that
\begin{equation}
\lim_{r\to0}\frac1{|B(x_0,r)|}\int_{B(x_0,r)}|V(x)-V(x_0)|dx=0.
\label{Lebesgue} 
\end{equation}
First we notice that
$$\lim_{n\to\ii}\norm{\left(\frac{\epsilon\sigma\cdot p}{\sqrt{1+\epsilon^2|p|^2}}-1\right)\phi_n}_{H^1}=\lim_{n\to\ii}\norm{\left(\frac{\epsilon|p|}{\sqrt{1+\epsilon^2|p|^2}}-1\right)\phi_n}_{H^1}=0$$
as is seen by Fourier transform and Lebesgue's dominated convergence theorem. Therefore, 
\begin{equation}
 \lim_{n\to\ii}\norm{W\left(\frac{\epsilon\sigma\cdot p}{\sqrt{1+\epsilon^2|p|^2}}-1\right)\zeta_n}_{L^2}=0
\label{limit_pscal_zeta_n}
\end{equation}
since we have $W\in L^2+L^\ii$. On the other hand we have
$\lim_{n\to\ii}\norm{(W-W(x_0))\zeta_n}_{L^2}=0.$
Using this to estimate cross terms we obtain $\lim_{n\to\ii}\norm{(A-W(x_0))\zeta_n}_{L^2}=0$. This proves that ${\rm Ess}(W)\subseteq \sigma_{\rm ess}(A)$. 
Let us remark that $0\in\sigma_{\rm ess}(A)$ as seen by taking $\phi'_n(x)=n^{-3/2}\phi_1(x/n)$.

The last step is to show that $\sigma_{\rm ess}(A)\subseteq [\min\{0,\inf(W)\},\max\{0,\sup(W)\}]$. When $\sup(W)<\ii$, we estimate
$A\leq \sup(W)\cL_\epsilon^2.$
If $W\leq0$, then we just get $A\leq 0$, hence $\sigma(A)\subseteq(-\ii,0]$. If $0<\sup(W)<\ii$, we can estimate $\cL_\epsilon^2\leq 1$ and get $\sigma(A)\subset(-\ii,\sup(W)]$. Repeating the argument for the lower bound, this ends the proof of Lemma \ref{lem:prop_DKB}.
\end{proof}
\end{proof}

\subsubsection{Dirac operators in free basis}\label{sec:Dirac2}
In this section, we prove that a way to avoid pollution in the \emph{whole gap} is to take a basis associated with the spectral decomposition of the free Dirac operator, i.e. choosing as projector $P^0_+:=\chi_{(0,\ii)}(D^0)$. As we will see this choice does not rely on the size of $V$ like in the previous section. Its main disadvantage compared to the dual method making use of $\cP_\epsilon$, is that constructing a basis preserving the decomposition induced by $P^0_+$ requires a Fourier transform, which might increase the computational cost dramatically.
First we treat the case of a `smooth' enough potential.
\begin{theorem}[No pollution in free basis - nonsingular case]\label{thm:no_poll_free_basis} Assume that $V$ is a real function such that 
$$V\in L^p(\R^3)+\left(L^r(\R^3)\cap \dot{W}^{1,q}(\R^3)\right)+L^\ii_\epsilon(\R^3)$$
for some $6<p<\ii$, some $3<r\leq 6$ and some $2<q<\ii$.
Then one has
$${\Spu(D^0+V,P^0_+)}=\emptyset.$$
\end{theorem}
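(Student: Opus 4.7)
The plan is to apply Theorem~\ref{thm:cond_no_pollute} to $A = D^0 + V$ with projector $P = P^0_+$. Combined with Theorem~\ref{thm:general}, which forbids pollution outside $\Conv(\hat\sigma_{\rm ess}(D^0+V))$, this will yield $\Spu(D^0+V,P^0_+) = \emptyset$. The assumptions on $V$ imply via standard Hardy-Sobolev estimates applied to each of the three pieces that $V$ is $D^0$-bounded with small relative bound, so $D^0+V$ is self-adjoint on $H^1(\R^3,\C^4)$ and has essential spectrum $(-\ii,-1]\cup[1,\ii)$ by Weyl's theorem. Pick any $c \in (-1,1)\setminus\sigma(D^0+V)$ and set $\Pi := \chi_{(c,\ii)}(D^0+V)$. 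Taking $\cC := C_0^\ii(\R^3,\C^4)$, the hypothesis of Theorem~\ref{thm:P} is immediate since $P^0_+$ commutes with $D^0$.

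The core step is to show that $(P^0_+ - \Pi)|D^0+V-c|^{1/2}$ extends to a compact operator. Since $\Pi - \chi_{(0,\ii)}(D^0+V)$ is finite rank (only finitely many eigenvalues lie between $0$ and $c$), it suffices to treat $(P^0_+ - \chi_{(0,\ii)}(D^0+V))|D^0+V-c|^{1/2}$. Because $\sigma(D^0+V)$ is unbounded both above and below, a bounded Cauchy contour as in the proof of Corollary~\ref{cor:no_pollute} is unavailable; I would instead integrate on the imaginary axis using $\mathrm{sign}(H) = \pi^{-1}\int_{\R} (H-it)^{-1}\,dt$ and the resolvent identity to obtain
\begin{equation*}
P^0_+ - \chi_{(0,\ii)}(D^0+V) = \frac{1}{2\pi}\int_{-\ii}^{+\ii} (D^0-it)^{-1}\,V\,(D^0+V-it)^{-1}\,dt.
\end{equation*}

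The main difficulty is to control the integrand after multiplication by $|D^0+V-c|^{1/2}$ in a compact-operator norm that is integrable in $t$. I would absorb $|D^0+V-c|^{1/2}$ into the right resolvent using $|D^0+V-c|^{1/2} \leq C(|D^0|+1)^{1/2}$, producing a bounded factor $(|D^0|+1)^{1/2}(D^0+V-it)^{-1}$ of norm $O((1+|t|)^{-1/2})$. Splitting $V = V_1 + V_2 + V_3$ according to the three function spaces, compactness of $V_j(D^0-it)^{-1}$ with Schatten-norm decay in $|t|$ follows from the Kato-Seiler-Simon inequality $\norm{f(x)g(p)}_{\mathfrak{S}_p} \leq c_p\norm{f}_{L^p}\norm{g}_{L^p}$ for $V_1 \in L^p$ ($p > 6$) and, after density approximation, for $V_3 \in L^\ii_\epsilon$.

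The most delicate case, and the main obstacle I foresee, is $V_2 \in L^r \cap \dot{W}^{1,q}$ with $3 < r \leq 6$: the exponent $r$ is too low for Kato-Seiler-Simon alone to give sufficient decay in $t$. Here the gradient hypothesis is invoked through the commutator identity
\begin{equation*}
V_2(D^0-it)^{-1} = (D^0-it)^{-1}V_2 + (D^0-it)^{-1}\bigl(-i\,\alp\cdot\nabla V_2\bigr)(D^0-it)^{-1},
\end{equation*}
which trades a factor of $V_2 \in L^r$ for $\nabla V_2 \in L^q$ at the price of one additional resolvent that supplies both smoothing and extra $|t|$-decay. The exponent thresholds $p>6$, $r>3$, $q>2$ are tailored precisely so that the resulting Schatten norms are integrable in $t$ and the integral representation converges to a compact operator; the bulk of the technical work will lie in this bookkeeping.
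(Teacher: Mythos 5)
Your overall strategy coincides with the paper's: reduce to Theorem~\ref{thm:cond_no_pollute}, represent $P^0_+-\Pi$ by a resolvent integral over the imaginary axis, split $V=V_1+V_2+V_3$ according to the three function spaces, and handle $V_1$ and $V_3$ by the Kato--Seiler--Simon inequality and a density argument. You have also correctly identified that the piece $V_2\in L^r\cap\dot{W}^{1,q}$ with $3<r\leq6$ is the crux and that a commutator producing $\alp\cdot\nabla V_2$ is the right tool. As outlined, however, the treatment of $V_2$ does not close, for two reasons. First, your commutator identity produces, besides the gradient term, the term in which $V_2$ has merely been moved past one resolvent; that term still pairs a single resolvent with $V_2\in L^r$, and even after the $O((1+|t|)^{-1/2})$ gain from the half-power its $\gS_r$-norm decays only like $(1+|t|)^{-(3/2-3/r)}$, which is not integrable for $r\leq 6$. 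The paper does not estimate this term at all: it kills it by the exact operator identity $\int_{\R}(D^0+i\eta)^{-2}\,d\eta=0$, which requires first bringing two \emph{free} resolvents adjacent to $V_2$ via the resolvent identity and keeping $|D^0+V|^{1/2}$ out of the way (through the boundedness of $|D^0|^{-1/2}|D^0+V|^{1/2}$) — a configuration your plan of absorbing $|D^0+V-c|^{1/2}$ into the full resolvent at the outset does not produce.

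Second, and more seriously, that resolvent expansion generates remainders containing two or more factors of $V_2$, and ``one additional resolvent'' is not enough for them: the term with two factors of $V_2$ decays like $(1+|\eta|)^{-(5/2-6/r)}$, not integrable for $3<r\leq4$, and matters worsen as $r\downarrow3$, forcing an expansion to order $k$ with $k(1-3/r)>1/2$. For each intermediate term the paper inserts $1=P^0_++P^0_-$ between consecutive free resolvents, observes that the contributions containing only $P^0_+$ or only $P^0_-$ vanish by residue calculus, and extracts from the mixed terms a commutator $[P^0_\pm,V_2]$, which is again controlled by $\nabla V_2\in L^q$ via a further Cauchy-integral representation. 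This insertion-and-residue mechanism is a genuinely separate idea that your proposal does not anticipate; the ``bookkeeping'' you defer is precisely where it is needed.
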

\begin{remark}
We have used the notation
$$L^r(\R^3)\cap \dot{W}^{1,q}(\R^3)=\{V\in L^r(\R^3)\ |\ \nabla V\in L^q(\R^3)\}.$$ 
\end{remark}
\begin{remark}
A physical situation for which the potential $V$ satisfies the assumptions of the theorem is $V=\rho\ast\frac1{|x|}$ with $\rho\in L^1(\R^3)\cap L^2(\R^3)$.
\end{remark}

\begin{proof}
Under the above assumptions on the potential $V$, it is easily seen that the operator $D^0+V$ is self-adjoint with domain $H^1(\R^3,\C^4)$, the same as $D^0$, and that $\sigma_{\rm ess}(D^0+V)=\sigma(D^0)=(-\ii,-1]\cup[1,\ii)$ (these claims are indeed a consequence of the calculation below). Hence $(-1,1)$ only contains eigenvalues of finite multiplicity of $D^0+V$ and we may find a $c\in(-1,1)\setminus\sigma(D^0+V)$. In the following we shall assume for simplicity that $c=0$. The argument is very similar if $0\in\sigma(D^0+V)$. We will denote $\Pi=\chi_{[0,\ii)}(D^0+V)$ and prove that $(P^0_+ - \Pi)|D^0+V|^{1/2}$ is compact. This will end the proof, by Theorem \ref{thm:cond_no_pollute}.

As $0\notin\sigma(D^0+V)$, we have that $|D^0+V|\geq \epsilon$ for some $\epsilon>0$. Also, we have
$$\epsilon|D^0|^2+C_1\leq (D^0+V)^2\leq \epsilon|D^0|^2+C_2$$
for $\epsilon\geq0$ small enough. Taking the square root of the above inequality, this proves that $|D^0|^{-1/2}|D^0+V|^{1/2}$ and its inverse are both bounded operators.

 Next we use the resolvent formula together with Cauchy's formula like in \cite{HaiLewSer-05a} to infer
\begin{align*}
(P^0_+ - \Pi)|D^0+V|^{1/2} &=-\frac1{2\pi}\int_{-\ii}^\ii\left(\frac{1}{D^0+V+i\eta}-\frac{1}{D^0+i\eta}\right)|D^0+V|^{1/2}d\eta\\
&=\frac1{2\pi}\int_{-\ii}^\ii\frac{1}{D^0+i\eta}V\frac{|D^0+V|^{1/2}}{D^0+V+i\eta}d\eta.
\end{align*}

Let us now write  $V=V_1^n+V_2^n+V_3^n$ with $V_1^n\in L^p(\R^3)$ for $6<p<\ii$, $V_2^n\in L^r(\R^3)$ and $\nabla V_2^n\in L^q(\R^3)$ for $3<r\leq 6$, $2<q<\ii$, and $\norm{V_3^n}_{L^\ii(\R^3)}\to0$ as $n\to\ii$. We write
\begin{equation*}
 (P^0_+ - \Pi)|D^0+V|^{1/2} =K(V_1^n)+K(V_2^n)+K(V_3^n)
\end{equation*}
with
$$K(W):=\frac1{2\pi}\int_{-\ii}^\ii\frac{1}{D^0+i\eta}W\frac{|D^0+V|^{1/2}}{D^0+V+i\eta}d\eta$$
and estimate each term in an appropriate trace norm. We denote by $\gS_p$ the usual Schatten class \cite{Simon-79,ReeSim4} of operators $A$ having a finite $p$-trace, $\norm{A}_{\gS_p}=\tr(|A|^p)^{1/p}<\ii$. Let us recall the Kato-Seiler-Simon inequality (see \cite{SeiSim-75} and Thm 4.1 in \cite{Simon-79}) 
\begin{equation}
\forall p\geq2,\qquad \norm{f(-i\nabla)g(x)}_{\gS_p}\leq (2\pi)^{-3/p}
\norm{f}_{L^p(\R^3)}\norm{g}_{L^p(\R^3)}.
\label{KSS}
\end{equation}

The term $K(V_1^n)$ is treated as follows:
$$\norm{K(V_1^n)}_{\gS_p}\leq \frac1{2\pi}\int_{-\ii}^\ii\frac{d\eta}{(\epsilon^2+\eta^2)^{1/4}}\norm{(D^0+i\eta)^{-1}V_1^n }_{\gS_p}$$
where we have used that
$$\left\|\frac{|D^0+V|^{1/2}}{D^0+V+i\eta}\right\|\leq\frac{1}{(\epsilon^2+\eta^2)^{1/4}}.$$
By \eqref{KSS} we have
\begin{align}
\norm{(D^0+i\eta)^{-1}V_1^n}_{\gS_p}&\leq (2\pi)^{-3/p}\norm{V_1^n}_{L^p(\R^3)}\left(\int_{\R^3}\frac{dk}{(1+|k|^2+\eta^2)^{p/2}}\right)^{1/p}\nonumber\\
&\leq \frac{C}{1+\eta^{1-\frac{3}{p}}} \norm{V_1^n}_{L^p(\R^3)}.\label{premiere_estim_KSS}
\end{align}
Since $6<p<\ii$, this finally proves that 
$\norm{K(V_1^n)}_{\gS_p}\leq C\norm{V_1^n}_{L^p(\R^3)},$
hence this term is a compact operator for any $n$.

The term involving $V_2^n$ is more complicated to handle. First we use the formula \cite{HaiLewSer-05a,HaiLewSer-08}
\begin{align*}
\int_{-\ii}^\ii\frac{1}{D^0+i\eta}V_2^n\frac{1}{D^0+i\eta}\, d\eta&=\int_{-\ii}^\ii\frac{1}{D^0+i\eta}\left[V_2^n,\frac{1}{D^0+i\eta}\right]\, d\eta\\
&=\int_{-\ii}^\ii\frac{1}{(D^0+i\eta)^2}[D^0,V_2^n]\frac{1}{D^0+i\eta}\, d\eta\\
&=-i\int_{-\ii}^\ii\frac{1}{(D^0+i\eta)^2}(\alp\cdot \nabla V_2^n)\frac{1}{D^0+i\eta}\, d\eta.
\end{align*}
Iterating the resolvent formula we arrive at
\begin{multline}
K(V_2^n)=-\frac{i}{2\pi}\left(\int_{-\ii}^\ii\frac{1}{(D^0+i\eta)^2}(\alp\cdot \nabla V_2^n)\frac{|D^0|^{1/2}}{D^0+i\eta}d\eta\right)|D^0|^{-1/2}|D^0+V|^{1/2}\\
-\frac1{2\pi}\int_{-\ii}^\ii\frac{1}{D^0+i\eta}V_2^n\frac{1}{D^0+i\eta}V\frac{|D^0+V|^{1/2}}{D^0+V+i\eta}d\eta.\label{iterating_resolvent}
\end{multline}
The first term can be estimated as before by (recall that $|D^0|^{-1/2}|D^0+V|^{1/2}$ is bounded)
$$\norm{\frac{i}{2\pi}\int_{-\ii}^\ii\frac{1}{(D^0+i\eta)^2}(\alp\cdot \nabla V_2^n)\frac{|D^0|^{1/2}}{D^0+i\eta}d\eta}_{\gS_q}\leq C\norm{\nabla V_2^n}_{L^q(\R^3)}\int_{-\ii}^\ii\frac{d\eta}{1+\eta^{1+3\frac{q-2}{2}}}$$
which is convergent since $q>2$ by assumption.

The next step is to expand the last term of \eqref{iterating_resolvent} using again the resolvent expansion:
\begin{align}
 &\int_{-\ii}^\ii\frac{1}{D^0+i\eta}V_2^n\frac{1}{D^0+i\eta}V\frac{|D^0+V|^{1/2}}{D^0+V+i\eta}d\eta\nonumber\\
&\qquad=  \sum_{j=1}^{k-1}(-1)^{j+1}\left(\int_{-\ii}^\ii\frac{1}{D^0+i\eta}V_2^n\left(\frac{1}{D^0+i\eta}V\right)^j\frac{1}{D^0+i\eta}|D^0|^{1/2}d\eta\right)|D^0|^{-1/2}|D^0+V|^{1/2}\nonumber\\
&\qquad\qquad\qquad-(-1)^k\int_{-\ii}^\ii\frac{1}{D^0+i\eta}V_2^n\left(\frac{1}{D^0+i\eta}V\right)^k\frac{1}{D^0+V+i\eta}|D^0+V|^{1/2}d\eta.\label{seconde_estimate_KSS}
\end{align}
By \eqref{premiere_estim_KSS} we see that the last term belongs to $\gS_{kr}$ when $k$ is chosen large enough such that $k(1-3/r)>1/2$ (which is possible since $r>3$).

We now have to prove that the other terms corresponding to $j=2...k-1$ in \eqref{seconde_estimate_KSS} are also compact. We will only consider the term $j=2$, the others being handled similarly. Writing $V=V_1^n+V_2^n+V_3^n$ the terms containing $V_1^n$ and $V_3^n$ are treated using previous ideas. For the term which only contains $V_2^n$, the idea is, as done previously in \cite{HaiLewSer-05a}, to insert $P^0_++P^0_-=1$ as follows
$$\int_{-\ii}^\ii\frac{P^0_++P^0_-}{D^0+i\eta}V_2^n\frac{P^0_++P^0_-}{D^0+i\eta}V_2^n\frac{P^0_++P^0_-}{D^0+i\eta}|D^0|^{1/2}d\eta.$$
The next step is to expand and note that, by the residuum formula, the ones which contains only $P^0_+$ or only $P^0_-$ vanish. Hence we only have to treat terms which contain two different $P^0_\pm$. We will consider for instance
\begin{multline}
\int_{-\ii}^\ii\frac{P^0_-}{D^0+i\eta}V_2^n\frac{P^0_+}{D^0+i\eta}V_2^n\frac{P^0_+}{D^0+i\eta}|D^0|^{1/2}d\eta\\=\int_{-\ii}^\ii\frac{P^0_-}{D^0+i\eta}[P^0_-,V_2^n]\frac{P^0_+}{D^0+i\eta}V_2^n\frac{P^0_+}{D^0+i\eta}|D^0|^{1/2}d\eta.
\label{terme_deuxieme_ordre}
\end{multline}
Now we have using again a Cauchy formula for $P^0_-$
$$|D^0|^{-1/2}[P^0_-,V_2^n]=-\frac{i}{2\pi}\int_{-\ii}^\ii\frac{|D^0|^{-1/2}}{D^0+i\eta}\sigma\cdot\nabla V_2^n\frac{1}{D^0+i\eta}d\eta.$$
The Kato-Seiler-Simon inequality \eqref{KSS} yields as before
$$\norm{|D^0|^{-1/2}[P^0_-,V_2^n]}_{\gS_q}\leq C\norm{\nabla V_2^n}\int_{-\ii}^\ii\frac{d\eta}{1+\eta^{1+3\frac{q-2}{2}}}$$
Inserting this in \eqref{terme_deuxieme_ordre} and using that $V_n^2\in L^r$, we see that the corresponding operator is compact.

Eventually we have by a trivial estimate
$$\norm{(P^0_+ - \Pi)|D^0+V|^{1/2}-K(V_1^n)-K(V_2^n)}=\norm{K(V_3^n)}\leq C\norm{V_3^n}_{L^\ii(\R^3)}\to_{n\to\ii}0.$$
As $(P^0_+ - \Pi)|D^0+V|^{1/2}$ is a limit in the operator norm of a sequence of compact operators, it must be compact.
\end{proof}

We treat separately the case of a Coulomb-type singularity, for which $(P^0_+ - \Pi)|D^0+V|^{1/2}$ is not compact, hence we cannot use Theorem \ref{thm:cond_no_pollute} directly. 

\begin{theorem}[No pollution in free basis - Coulomb case] Let $|\kappa|<\sqrt{3}/2$. Then 
$${\Spu\left(D^0+\frac{\kappa}{|x|},P^0_+\right)}\cap(-1,1)=\emptyset.$$
\end{theorem}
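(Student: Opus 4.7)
The plan is to invoke Theorem~\ref{thm:P} with $P=P^0_+$ and to choose as core $\cC=\cF^{-1}\bigl(C^\ii_0(\R^3\setminus\{0\},\C^4)\bigr)$, which is preserved by $P^0_+$ (a Fourier multiplier) and contained in $H^1(\R^3,\C^4)=D(D^0+V)$ (since for $|\kappa|<\sqrt{3}/2$ the Dirac-Coulomb operator is self-adjoint on $H^1$ and $\cC$ is dense there). Because $\sigma_{\rm ess}(D^0+V)=(-\ii,-1]\cup[1,\ii)$, the theorem reduces the claim to the two inclusions $\sigma_{\rm ess}(A_+)\subset[1,\ii)$ and $\sigma_{\rm ess}(A_-)\subset(-\ii,-1]$, where $A_\pm:=(D^0+V)_{|P^0_\pm\gH}$. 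The two are related by the antiunitary charge conjugation $C=i\beta\alpha_2\cK$, which satisfies $CD^0C^{-1}=-D^0$, $CP^0_\pm C^{-1}=P^0_\mp$ and preserves multiplication by the real function $V$; it therefore suffices to prove the inclusion for $A_+$ while allowing either sign of $\kappa$, since $CA_-C^{-1}$ equals (up to a sign) the $A_+$ built from $-\kappa$.

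The key input is the Hardy-Dirac inequality of Tix: for the critical constant $c^\star:=2/(\pi/2+2/\pi)\approx 0.906$, one has
\[
\pscal{\psi,(D^0-c^\star/|x|)\psi}\geq 0\qquad\forall\psi\in P^0_+H^{1/2}(\R^3,\C^4).
\]
Since $\sqrt{3}/2<c^\star$, I would set $\alpha:=|\kappa|/c^\star<1$ and perform a convex interpolation using the identity $\pscal{\psi,D^0\psi}=\pscal{\psi,|D^0|\psi}$ on $P^0_+\gH$ to obtain the coercive estimate
\[
\pscal{\psi,A_+\psi}\geq(1-\alpha)\pscal{\psi,|D^0|\psi}\qquad\forall\psi\in P^0_+H^{1/2}.
\]
(When $\kappa>0$ the potential is repulsive and this holds trivially with $1-\alpha=1$.) Note in particular that this shows $A_+$ to be bounded below on its form domain $P^0_+ H^{1/2}$ and determines that form domain.

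The essential-spectrum inclusion is then proved by a Weyl-sequence argument. Pick $(\psi_n)\subset P^0_+\gH\cap D(A_+)$ with $\norm{\psi_n}=1$, $\psi_n\wto 0$ and $(A_+-\lambda)\psi_n\to 0$ strongly, and assume $\lambda<1$. The coercivity gives
\[
\norm{|D^0|^{1/2}\psi_n}^2\leq(1-\alpha)^{-1}\pscal{\psi_n,A_+\psi_n}\xrightarrow[n\to\ii]{}(1-\alpha)^{-1}\lambda,
\]
so $(\psi_n)$ is bounded in $H^{1/2}(\R^3,\C^4)$. The compact embedding $H^{1/2}\hookrightarrow L^p_{\rm loc}$ for $p<3$ then yields $\psi_n\to 0$ strongly in $L^p_{\rm loc}$ for, say, $p=12/5$; a H\"older estimate with $1/|x|\in L^q_{\rm loc}$ for $q<3$ gives $\int_{|x|\leq R}|\psi_n|^2/|x|\,dx\to 0$ for every fixed $R$. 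Together with the trivial bound $\int_{|x|\geq R}|\psi_n|^2/|x|\,dx\leq R^{-1}$ this shows $\int|\psi_n|^2/|x|\,dx\to 0$. Passing to the limit in
\[
\pscal{\psi_n,A_+\psi_n}=\norm{|D^0|^{1/2}\psi_n}^2+\kappa\int|\psi_n|^2/|x|\,dx
\]
yields $\norm{|D^0|^{1/2}\psi_n}^2\to\lambda$, and since $|D^0|\geq 1$ this forces $\lambda\geq 1$, contradicting $\lambda<1$.

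The main obstacle is obtaining the coercive bound $A_+\geq (1-\alpha)|D^0|_{P^0_+}$, on which both the $H^{1/2}$-boundedness of the Weyl sequence and the applicability of the Rellich compactness rest. The crude Hardy bound $\norm{V\psi}\leq 2|\kappa|\norm{D^0\psi}$ only closes for $|\kappa|<1/2$, so Tix's sharp inequality (whose critical constant happens to strictly exceed $\sqrt{3}/2$) is essential; extending the statement to $|\kappa|\in[\sqrt{3}/2,1)$ would require the Dolbeault-Esteban-S\'er\'e-Loss min-max characterization and a careful treatment of the distinguished self-adjoint extension, whose domain strictly exceeds $H^1$.
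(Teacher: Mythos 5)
Your overall strategy is the same as the paper's: reduce the claim via Theorem~\ref{thm:P} and Remark~\ref{rmk:ess_self-adjoint} to showing that the (semi-bounded, Friedrichs-extended) operators $P^0_\pm(D^0+\kappa|x|^{-1})P^0_\pm$ have essential spectrum contained in $[1,\ii)$ and $(-\ii,-1]$ respectively, the whole point being that $\sqrt{3}/2<\gamma_c:=2/(\pi/2+2/\pi)$. The paper simply quotes Theorem~2 of \cite{EvaPerSie-96} for these two facts; you instead try to prove the inclusion by a Weyl-sequence argument, and that is where there is a genuine gap. The problematic step is the claim that $\int|\psi_n|^2|x|^{-1}\,dx\to0$ for a sequence that is merely bounded in $H^{1/2}$ and converges weakly to zero. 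Arithmetically, the H\"older pairing you propose cannot work: strong local convergence from the Rellich embedding is available only in $L^p_{\rm loc}$ for $p<3$, so $|\psi_n|^2\to0$ only in $L^{q'}_{\rm loc}$ for $q'<3/2$, whose conjugate exponent satisfies $q>3$, while $|x|^{-1}\notin L^q_{\rm loc}(\R^3)$ for any $q\geq3$; pairing the other way, $|x|^{-1}\in L^q_{\rm loc}$ with $q<3$ forces you to control $\norm{\psi_n}_{L^{2q'}}$ with $2q'>3$, which exceeds even the critical embedding $H^{1/2}\hookrightarrow L^3$. More fundamentally, the asserted implication is false: Kato's inequality $|x|^{-1}\leq\frac{\pi}{2}|p|$ is scale invariant, so the Coulomb potential is \emph{not} form-compact relative to $|D^0|$. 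Taking $\chi_n(x)=n^{3/2}\chi(nx)$ and $\psi_n=n^{-1/2}\chi_n+(1-n^{-1})^{1/2}\phi(\cdot-ne)$ gives a sequence bounded in $H^{1/2}$, of unit norm, converging weakly to $0$, with $\int|\psi_n|^2|x|^{-1}dx\to\int|\chi|^2|x|^{-1}dx>0$. Concentration at the singularity is exactly what must be excluded, and it cannot be excluded by compactness.

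To close the gap you must either cite Theorem~2 of \cite{EvaPerSie-96} as the paper does, or reprove it: the standard route is an IMS localization into a neighbourhood of the origin, an annulus, and a neighbourhood of infinity, using the critical inequality $\pscal{\psi,(D^0-\gamma_c|x|^{-1})\psi}\geq0$ \emph{near the origin} so that a concentrating piece contributes at least a fixed positive multiple of its mass; this is precisely the structure of the argument establishing $m_2'\geq1$ in the proof of Theorem~\ref{thm:atomic_balance}, and it is more delicate here because $P^0_+$ is nonlocal and does not commute with the cut-off functions. The remaining ingredients of your proposal are sound: the choice of core, the convex interpolation yielding $A_+\geq(1-|\kappa|/\gamma_c)\,|D^0|$ on $P^0_+H^{1/2}$ (hence semi-boundedness and the $H^{1/2}$ bound on the Weyl sequence), and the charge-conjugation reduction of $A_-$ to an $A_+$ with reversed coupling are all correct and would combine with the corrected essential-spectrum step to give the result.
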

\begin{proof}
 The operators $P^0_+(D^0+\kappa|x|^{-1})P^0_+$ and $P^0_-(D^0+\kappa|x|^{-1})P^0_-$ are known to have a self-adjoint Friedrichs extension as soon as $|\kappa|<2/(\pi/2+2/\pi)$, see \cite{EvaPerSie-96}. Furthermore one has $\sigma_{\rm ess}(D^0+\kappa|x|^{-1})_{|P^0_+L^2}=[1,\ii)$ and $\sigma_{\rm ess}(D^0+\kappa|x|^{-1})_{|P^0_-L^2}=(-\ii,-1]$, see Theorem 2 in \cite{EvaPerSie-96}. As $\sqrt{3}/2<2/(\pi/2+2/\pi)$, the result immediately follows from Theorem \ref{thm:P} and Remark \ref{rmk:ess_self-adjoint}.
\end{proof}

\section{Balanced basis}\label{sec:balanced}
In Section \ref{sec:splitting} we have studied and characterized spectral pollution in the case of a spitting $\gH=P\gH\oplus(1-P)\gH$ of the main Hilbert space. In particular for the case of the Dirac operator $D^0+V$ we have seen that the simple decomposition into upper and lower spinors may yield to pollution as soon as $V\neq0$. In this section we study an abstract theory (inspired of methods used in Physics and Chemistry) in which one tries to avoid pollution by \emph{imposing a relation between the vectors of the basis in $P\gH$ and in $(1-P)\gH$}, modelled by one operator $L:P\gH\to(1-P)\gH$. We call such basis a \emph{balanced basis}.

\subsection{General results}
Consider an orthogonal projection $P:\gH\to\gH$. Let $L:D(L)\subset P\gH\to(1-P)\gH$ be a (possibly unbounded) operator which we call \emph{balanced operator}. We assume that
\begin{itemize}
\item $L$ is an injection: if $Lx=0$ for $x\in D(L)$, then $x=0$;
\item $D(L)\oplus LD(L)$ is a core for $A$.
\end{itemize}

\begin{definition}[Spurious eigenvalues in balanced basis]\label{def_spur_balanced}\it
We say that $\lambda\in\R$ is a \emph{$(P,L)$-spurious eigenvalue} of the operator $A$ if there exist a sequence of finite dimensional spaces $\{V_n^+\}_{n\geq1}\subset D(L)$ with $V_n^+\subset V_{n+1}^+$ for any $n$, such that 
\begin{enumerate}
\item $\overline{\cup_{n\geq1}(V_n^+\oplus LV_n^+)}^{D(A)}=D(A)$;
\item $\displaystyle\lim_{n\to\ii}{\rm dist}\left(\lambda,\sigma\left(A_{|(V_n^+\oplus LV_n^+)}\right)\right)=0$;
\item $\lambda\notin\sigma(A)$.
\end{enumerate}
We denote by $\Spu(A,P,L)$ the set of $(P,L)$-spurious eigenvalues of the operator $A$.
\end{definition}

\begin{remark}
Another possible definition would be to only ask that for all $n$, $V_n^-$ contains $LV_n^+$. This would actually also correspond to some methods used by chemists (like the so-called \emph{unrestricted kinetic balance} \cite{DyaFae-90}). The study of these methods is similar but simpler than the one given by Definition \ref{def_spur_balanced}.
\end{remark}

Contrarily to the previous section, we will not characterize completely $(P,L)$-spurious eigenvalues. We will only give some necessary or sufficient conditions which will be enough for the examples we are interested in and which we study in the next section. We will assume as in the previous section that $PAP$ (resp. $(1-P)A(1-P)$) is essentially self-adjoint on $D(L)$ (resp. on $LD(L)$) with closure denoted as $A_{|P\gH}$ (resp. $A_{|(1-P)\gH}$).

\subsubsection{Sufficient conditions}\label{sec:balanced_suff}
We start by exhibiting a very simple part of the polluted spectrum. For any fixed $0\neq x\in D(L)$, we consider the $2\times2$ matrix $M(x)$ of $A$ restricted to the 2-dimensional space $x\C\oplus Lx\,\C$, and we denote by $\mu_1(x)\leq\mu_2(x)$ its eigenvalues. Note that $\mu_i$ is homogeneous for $i=1,2$, $\mu_i(\lambda x)=\mu_i(x)$.
\begin{theorem}[Pollution in balanced basis - sufficient condition]\label{thm:pollution_L_sufficient} Let $A$, $P$, $L$ as before and define $m_i,M_i\in\R\cup\{\pm\ii\}$, $i=1,2$, as follows:
\begin{equation}
m_1:=\inf_{\substack{\{x_n^+\}\subset D(L)\setminus\{0\},\\ x_n^+\norm{x_n^+}^{-1}\wto0,\\ Lx_n^+\norm{Lx_n^+}^{-1}\wto0}}\liminf_{n\to\ii}\mu_1(x_n^+),\qquad
M_1:=\sup_{\substack{\{x_n^+\}\subset D(L)\setminus\{0\},\\ x_n^+\norm{x_n^+}^{-1}\wto0,\\ Lx_n^+\norm{Lx_n^+}^{-1}\wto0}}\limsup_{n\to\ii}\mu_1(x_n^+),
\label{def_m_1} 
\end{equation}
\begin{equation}
m_2:=\inf_{\substack{\{x_n^+\}\subset D(L)\setminus\{0\},\\ x_n^+\norm{x_n^+}^{-1}\wto0,\\ Lx_n^+\norm{Lx_n^+}^{-1}\wto0}}\liminf_{n\to\ii}\mu_2(x_n^+),\qquad
M_2:=\sup_{\substack{\{x_n^+\}\subset D(L)\setminus\{0\},\\ x_n^+\norm{x_n^+}^{-1}\wto0,\\ Lx_n^+\norm{Lx_n^+}^{-1}\wto0}}\limsup_{n\to\ii}\mu_2(x_n^+).
\label{def_m} 
\end{equation}
Then we have:
\begin{equation}
[m_1,M_1]\cup [m_2,M_2]\subseteq \overline{\Spu(A,P,L)}\cup\hat\sigma_{\rm ess}(A).
\end{equation}
\end{theorem}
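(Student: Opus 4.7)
The plan is to combine a balanced-basis analogue of Lemma \ref{lem:reciproque_Weyl_P} with a continuity/intermediate-value argument in the spirit of the end of the proof of Theorem \ref{thm:general}.

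The first step is to establish the following variant of Lemma \ref{lem:reciproque_Weyl_P}: if $(y_n^1,\ldots,y_n^K)$ is a family in $D(L)\setminus\{0\}$ with $y_n^j/\|y_n^j\|\wto 0$ and $Ly_n^j/\|Ly_n^j\|\wto 0$ for each $j$, and if the space $W_n=\mathrm{span}(y_n^j,Ly_n^j:j=1,\ldots,K)$ satisfies ${\rm dist}(\lambda,\sigma(A_{|W_n}))\to 0$, then $\lambda\in\Spu(A,P,L)\cup\sigma(A)$. The proof mirrors that of Lemma \ref{lem:reciproque_Weyl}: starting from any fixed increasing sequence $\{U_n^+\}\subset D(L)$ with $\overline{\cup_n(U_n^+\oplus LU_n^+)}^{D(A)}=D(A)$, the weak convergence of the normalized $y_m^j$ and $Ly_m^j$ forces the off-diagonal blocks of the matrix of $A$ in $(U_n^++\mathrm{span}(y_{m_n}^j)_j)\oplus L(U_n^++\mathrm{span}(y_{m_n}^j)_j)$ to become negligible as $m_n\to\ii$ with $n$ fixed, so a diagonal choice of $m_n$ produces an increasing approximation sequence of the form required by Definition \ref{def_spur_balanced} in which $\lambda$ is caught.

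Given $\lambda\in(m_1,M_1)$, by definition there exist $\{y_n\},\{z_n\}\subset D(L)\setminus\{0\}$ of the required weak-null type with $\limsup\mu_1(y_n)<\lambda<\liminf\mu_1(z_n)$. By scale invariance of $\mu_1$ I normalize $\|y_n\|=\|z_n\|=1$. By a standard diagonal extraction applied jointly to the pairs $(y_n,z_n)$ and $(Ly_n/\|Ly_n\|,Lz_n/\|Lz_n\|)$, made possible by the four weak convergences to $0$, I may further assume $\pscal{y_n,z_n}\to 0$ and $\pscal{Ly_n,Lz_n}/(\|Ly_n\|\|Lz_n\|)\to 0$. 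Consider the family $w_n(\theta)=\cos\theta\,y_n+\sin\theta\,z_n$; for $n$ large $y_n$ and $z_n$ are linearly independent (otherwise $\mu_1(y_n)=\mu_1(z_n)$), and by injectivity of $L$ the pair $(w_n(\theta),Lw_n(\theta))$ is orthogonal and nonzero for all $\theta\in[0,\pi/2]$, so the $2\times 2$ matrix of $A$ in the orthonormal basis $(w_n(\theta)/\|w_n(\theta)\|,Lw_n(\theta)/\|Lw_n(\theta)\|)$ is well-defined and depends continuously on $\theta$. Its smaller eigenvalue $\nu_n(\theta)$ interpolates continuously between $\nu_n(0)=\mu_1(y_n)$ and $\nu_n(\pi/2)=\mu_1(z_n)$, so the intermediate value theorem yields $\theta_n\in(0,\pi/2)$ with $\nu_n(\theta_n)=\lambda$. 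I then set $v_n:=w_n(\theta_n)$ and verify the hypotheses of Step~1 with $K=1$: the identity $\|v_n\|^2=1+\sin(2\theta_n)\Re\pscal{y_n,z_n}+o(1)\to 1$ gives $v_n/\|v_n\|\wto 0$, while $Lv_n/\|Lv_n\|\wto 0$ follows from a case analysis on $\theta_n$, using in the interior regime the lower bound $\|Lv_n\|^2\geq c(\|Ly_n\|^2+\|Lz_n\|^2)$ coming from the decoupling $\pscal{Ly_n,Lz_n}=o(\|Ly_n\|\|Lz_n\|)$, and in the boundary regimes the fact that $v_n\simeq y_n$ or $v_n\simeq z_n$. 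Applying Step~1 gives $\lambda\in\Spu(A,P,L)\cup\sigma(A)$; taking closures yields $[m_1,M_1]\subseteq\overline{\Spu(A,P,L)}\cup\hat\sigma_{\rm ess}(A)$, and the identical argument with $\mu_1$ replaced by $\mu_2$ (the larger eigenvalue of the $2\times 2$ matrix) handles $[m_2,M_2]$.

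The main obstacle is the bookkeeping in the second step: since $L$ is a priori unbounded, the norms $\|Ly_n\|$ and $\|Lz_n\|$ need not stay comparable, and verifying both the continuity of $\theta\mapsto\nu_n(\theta)$ and the weak-null property of $Lv_n/\|Lv_n\|$ requires exploiting the full strength of the two independent diagonal extractions together with the scale invariance of $\mu_i$. Everything else is a direct transcription of already known arguments.
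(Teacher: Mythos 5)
Your proposal is correct and follows essentially the same route as the paper: the paper's proof rests on the same balanced analogue of the Weyl-type converse lemma (Lemma \ref{lem:reciproque_Weyl_L}, whose proof is likewise only sketched by reference to Lemma \ref{lem:reciproque_Weyl}), and then shows that the set of attainable limits of $\mu_i$ along weak-null normalized sequences is closed and convex via exactly your interpolation $z_n(\theta)=\cos\theta\,x_n+\sin\theta\,y_n$, the same double extraction making $\pscal{x_n,y_n}$ and $\pscal{Lx_n\|Lx_n\|^{-1},Ly_n\|Ly_n\|^{-1}}$ vanish, and the same boundedness argument for the coefficients of $Lz_n(\theta_n)\|Lz_n(\theta_n)\|^{-1}$ in the normalized $Lx_n,Ly_n$. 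The only difference is presentational (the paper phrases the intermediate-value step as convexity of the limit sets $K_i$).
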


We supplement the above result by the following

\begin{remark}
The two diagonal elements of the matrix $A(x_n^+)$ being $\pscal{Ax_n^+,x_n^+}\norm{x_n^+}^{-2}$ and $\pscal{ALx_n^+,Lx_n^+}\norm{Lx_n^+}^{-2}$, it is clear that we have
$$m_2\geq m_1\geq \max\left(\inf\hat{\sigma}_{\rm ess}(A_{|(1-P)\gH})\,,\,\inf\hat{\sigma}_{\rm ess}(A_{|P\gH})\right),$$
$$M_1\leq M_2\leq \min\left(\sup\hat{\sigma}_{\rm ess}(A_{|(1-P)\gH})\,,\,\sup\hat{\sigma}_{\rm ess}(A_{|P\gH})\right),$$
which is compatible with Theorem \ref{thm:P}, since we must of course have $\Spu(A,P,L)\subset \Spu(A,P)$.
\end{remark}

\begin{proof}
We will use the following
\begin{lemma}\label{lem:reciproque_Weyl_L} 
Assume that $A$, $P$ and $L$ are as above.
Let $\{V_n\}\subset D(L)$ be a sequence of $K$-dimensional spaces with orthonormal basis $(x_n^1,...,x_n^K)$. Let $(y_n^1,...,y_n^K)$ be an orthonormal basis of $LV_n\subset (1-P)\gH$. We assume that $x_n^k\wto0$ and $y_n^k\wto0$ weakly for every $k=1..K$, as $n\to\ii$.
If $\lambda\in\R$ is such that 
$\lim_{n\to\ii}{\rm dist}\left(\lambda\,,\,\sigma(A_{|V_n\oplus LV_n})\right)=0,$
then $\lambda\in{\rm Spu}(A,P,L)\cup\sigma(A)$.
\end{lemma}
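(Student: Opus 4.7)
The plan is to adapt the inductive augmentation argument used in the proof of Lemma \ref{lem:reciproque_Weyl_P}. By hypothesis $D(L)\oplus LD(L)$ is a core for $A$, so there exists a nondecreasing sequence $\{U_n^+\}\subset D(L)$ with $\overline{\cup_n(U_n^+\oplus LU_n^+)}^{D(A)}=D(A)$. I will build $V_n^+\supset U_n^+$ by adding, at each step, the given space $V_{m_n}$ for a sufficiently large index $m_n$. The key observation is that enlarging $V_n^+$ by any subspace of $D(L)$ automatically enlarges $LV_n^+$ by $L$ of that subspace, so the $2K$-dimensional subspace $V_{m_n}\oplus LV_{m_n}$, which carries an eigenvalue within $1/m_n$ of $\lambda$ by hypothesis, sits inside $V_n^+\oplus LV_n^+$ without any extra work on the $(1-P)\gH$ side.

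Concretely, set $V_1^+:=U_1^++V_{m_1}$ and inductively $V_{n+1}^+:=V_n^++U_{n+1}^++V_{m_{n+1}}$ with $m_{n+1}>m_n$ to be chosen. Writing $W_n:=V_{n-1}^++U_n^+$, a finite-dimensional subspace of $D(L)$ defined independently of $m_n$, we have
$$V_n^+\oplus LV_n^+=(W_n\oplus LW_n)+\mathrm{span}(x_{m_n}^k,y_{m_n}^k)_{k=1}^K.$$
The weak convergences $x_m^k\wto 0$ and $y_m^k\wto 0$, combined with the fact that $Az$ is a fixed vector of $\gH$ for each $z$ in the finite-dimensional set $W_n\oplus LW_n\subset D(A)$, force the cross inner products $\pscal{x_m^k,z}$, $\pscal{y_m^k,z}$, $\pscal{Ax_m^k,z}=\pscal{x_m^k,Az}$, and $\pscal{Ay_m^k,z}$ all to tend to zero as $m\to\ii$. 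For $m_n$ chosen large enough, a Gram--Schmidt orthonormalization therefore produces a basis of $V_n^+\oplus LV_n^+$ in which the matrix of $A$ is arbitrarily close to the block-diagonal matrix $\mathrm{diag}\bigl(A_{|W_n\oplus LW_n},\,A_{|V_{m_n}\oplus LV_{m_n}}\bigr)$.

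The ambient dimension at step $n$ is bounded by $\dim W_n+\dim LW_n+2K$, which is independent of $m_n$; hence standard eigenvalue continuity for matrices of fixed size allows us to choose $m_n$ large enough that $A_{|V_n^+\oplus LV_n^+}$ has an eigenvalue within $2/n$ of $\lambda$. By construction $U_n^+\subset V_n^+\subset V_{n+1}^+$, so the union $\cup_n(V_n^+\oplus LV_n^+)$ is a core for $A$, and the distance condition in Definition~\ref{def_spur_balanced} is satisfied. Consequently $\lambda\in\Spu(A,P,L)\cup\sigma(A)$.

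The only mildly delicate point, and the main obstacle I foresee, is the Gram--Schmidt/eigenvalue-perturbation step: one must verify that small cross inner products and small matrix elements $\pscal{Ax_m^k,z}$, $\pscal{Ay_m^k,z}$ translate, after orthonormalization, into a uniformly small perturbation of the block-diagonal spectrum. Because the matrix size at step $n$ is fixed independently of $m_n$, this reduces to routine finite-dimensional linear algebra. It is precisely here that the weak-convergence hypotheses on \emph{both} families $\{x_n^k\}$ and $\{y_n^k\}$ (rather than just one side, as would suffice in Lemma~\ref{lem:reciproque_Weyl_P}) are fully exploited.
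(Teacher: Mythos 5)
Your proposal is correct and follows exactly the strategy the paper has in mind: the paper omits this proof and simply says it is ``very similar to that of Lemma~\ref{lem:reciproque_Weyl}'', and your inductive augmentation (start from a nested core sequence $\{U_n^+\}\subset D(L)$, graft on $V_{m_n}$ at each step, use weak convergence of $x_{m_n}^k$ and $y_{m_n}^k$ against the finite-dimensional block $W_n\oplus LW_n$ to make the matrix asymptotically block-diagonal) is precisely that adaptation, with the key observation being that enlarging $V_n^+$ by $V_{m_n}$ automatically enlarges $LV_n^+$ by $LV_{m_n}$ so the balanced-basis constraint is preserved. One small slip in your closing remark: Lemma~\ref{lem:reciproque_Weyl_P} also requires weak convergence of \emph{both} orthonormal systems (you are perhaps thinking of Lemma~\ref{lem:reciproque_Weyl}, where there is only one family); this does not affect the validity of the proof.
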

The proof of Lemma \ref{lem:reciproque_Weyl_L} will be omitted, it is very similar to that of Lemma \ref{lem:reciproque_Weyl}. We notice that the two sets
\begin{multline}
K_i:=\Big\{\mu\in\R\cup\{\pm\ii\}\ :\ \exists\{x_n^+\}\subset D(L),\ x_n^+\norm{x_n^+}^{-1}\wto0,\\ Lx_n^+\norm{Lx_n^+}^{-1}\wto0,\ \mu_i(x_n^+)\to\mu\Big\}. 
\end{multline}
are closed convex sets, for $i=1,2$. Indeed, assume for instance that $\lambda_1,\lambda_2\in K_1$ and let be $\{x_n\}$ and $\{y_n\}$ such that $\mu_1(x_n)\to\lambda_1$ and $\mu_1(y_n)\to\lambda_2$. By the homogeneity of $\mu_1$ we may assume that $\norm{x_n}=\norm{y_n}=1$ for all $n$. Also, extracting a subsequence from $\{y_n\}$, we may always assume that $$\lim_{n\to\ii}\pscal{x_n,y_n}=\lim_{n\to\ii}\pscal{\frac{Lx_n}{\norm{Lx_n}},\frac{Ly_n}{\norm{Ly_n}}}=0.$$

Fix some $\lambda\in(\lambda_1,\lambda_2)$ and consider as usual $z_n(\theta)=\cos\theta\; x_n+\sin\theta\;y_n$. By continuity of the first eigenvalue of the $2\times2$ matrix of $A$ in the space spanned by $z_n(\theta)$ and $Lz_n(\theta)$, we know that there exists (for $n$ large enough) a $\theta_n\in(0,2\pi)$ such that $\mu_1(\theta_n)=\lambda$. Note that $\norm{z_n(\theta_n)}=1+o(1)$. Writing $Lz_n(\theta_n)\norm{Lz_n(\theta_n)}^{-1}=\alpha_n Lx_n\norm{Lx_n}^{-1}+\beta_n Ly_n\norm{Ly_n}^{-1}$ we see that both $\alpha_n$ and $\beta_n$ are bounded and satisfy $\alpha_n^2+\beta_n^2\to1$, hence $\|Lz_n(\theta_n)\|\to1$. It is then clear that $z_n(\theta_n)\|z_n(\theta_n)\|^{-1}\wto0$ and that $Lz_n(\theta_n)\|Lz_n(\theta_n)\|^{-1}\wto0$. Therefore $\lambda=\lim_{n\to\ii}\mu_2(z_n(\theta_n))\in K_1$. The argument is the same for $K_2$.
As Lemma \ref{lem:reciproque_Weyl_L} tells us that $K_1\cup K_2\subset \Spu(A,P,L)\cup\sigma(A)$, this ends the proof of Theorem \ref{thm:pollution_L_sufficient}.
\end{proof}

\subsubsection{Necessary conditions}\label{sec:balanced_nec}
Let us emphasize that, contrarily to $P$-spurious eigenvalues, for $(P,L)$-spurious eigenvalues the two spaces $P\gH$ and $(1-P)\gH$ do not play anymore a symmetric role due to the introduction of the operator $L$. 
For this reason we shall concentrate on pollution occurring in the \emph{upper part of the spectrum} and we will not give necessary conditions for the lower part\footnote{As we have mentionned before we always assume for simplicity that $\inf\hat{\sigma}_{\rm ess}(A_{|(1-P)\gH})\leq \inf\hat{\sigma}_{\rm ess}(A_{|P\gH})$, i.e. that $1-P$ is responsible from the pollution occuring in the lower part of the spectrum.}. Loosely speaking, obtaining an information on the lower part would need to study the operator $L^{-1}$. In the applications of the next section, we will simply compute the lower polluted spectrum explicitely using Theorem \ref{thm:pollution_L_sufficient}.
Let us introduce
\begin{equation}
d:=\sup{\sigma}(A_{(1-P)\gH}).
\label{def_max_interval_left}
\end{equation}
and assume that $d<\ii$.
In the sequel we will only study $(P,L)$-spurious eigenvalues in $(c,\ii)$. Note that due to Theorem \ref{thm:P}, it would be more natural to let instead $d:=\sup \hat{\sigma}_{\rm ess}(A_{(1-P)\gH})$ but this will actually not change anything for the examples we want to treat: in the Dirac case $D^0+V$ and for $P=\cP$, the orthogonal projector on the upper spinor defined in \eqref{def_P_upper_lower_spinor}, the spectrum of $(D^0+V)_{|(1-P)L^2(\R^3,\C^4)}=-1+V$ is only composed of essential spectrum. We do not know how to handle the case of an operator $A_{|(1-P)\gH}$ which has a nonempty discrete spectrum above its essential spectrum.
Our main result is the following
\begin{theorem}[Pollution in balanced basis - necessary conditions]\label{thm:pollution_L_necessary}
Let $A$, $P$, $L$ as before. We recall that the real number $d<\ii$ was defined in \eqref{def_max_interval_left}.

\smallskip

\noindent $(i)$ Let us define
\begin{equation}
m_2''=\inf_{x^+\in D(L)\setminus\{0\}}\mu_2(x^+)
\label{def_m222}
\end{equation}
and assume that $m_2''>d$. Then we have
$$\Spu(A,P,L)\cap(d,m_2'')=\emptyset.$$

\smallskip

\noindent $(ii)$ Let us define 
\begin{equation}
m_2'=\inf_{\substack{\{x_n^+\}\subset D(L)\setminus\{0\},\\ x_n^+\wto0,\ \norm{x_n^+}=1}}\liminf_{n\to\ii}\mu_2(x_n^+)
\label{def_m22}
\end{equation}
and assume that $m_2'>d$.
We also assume that the following additional \emph{continuity property} holds for some real number $b>d$:
\begin{equation}
 \left.\begin{array}{r}
\{x_n^+\}\subset D(L)\\
x_n^+\to 0\\
\displaystyle \limsup_{n\to\ii}\mu_2(x_n^+)<b
\end{array}\right\}\Longrightarrow \pscal{Ax_n^+,x_n^+}\to0.
\label{property_P} 
\end{equation}
Then we have
$${\Spu(A,P,L)}\cap\big(d,\min(m_2',b)\big)=\emptyset.$$
\end{theorem}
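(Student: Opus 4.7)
The plan for both parts is to argue by contradiction, starting from Lemma~\ref{lem:Weyl}. If $\lambda\in\Spu(A,P,L)$ then the proof of Lemma~\ref{lem:Weyl} actually delivers exact eigenvectors $\psi_n\in V_n^+\oplus LV_n^+$ of $A_{|V_n}$ with eigenvalues $\lambda_n\to\lambda$, normalised so that $\|\psi_n\|=1$ and $\psi_n\wto 0$. Since $V_n^+\subset P\gH$ and $LV_n^+\subset(1-P)\gH$ are orthogonal in $\gH$, the decomposition $\psi_n=u_n+w_n$ with $u_n\in V_n^+$, $w_n\in LV_n^+$ is unique and orthogonal; in particular $u_n=P\psi_n\wto 0$ and $\|u_n\|^2+\|w_n\|^2=1$.

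The backbone of the proof is a Talman-type structural lemma for the eigenvalues of $A_{|V_n}$ strictly above $d$. For any such eigenvalue $\lambda^*$ with eigenvector $u^*+w^*$ I would prove: $(a)$ $u^*\neq 0$ (otherwise $w^*\in LV_n^+$ would force $\lambda^*\leq d$); $(b)$ introducing $\lambda_n(v_0):=\sup_{w\in LV_n^+}\pscal{A(v_0+w),v_0+w}/(\|v_0\|^2+\|w\|^2)$, the $1+\dim V_n^+$-dimensional block of $A_{|V_n}$ on $u^*\C\oplus LV_n^+$ has at most one eigenvalue above $d$, which forces $\lambda^*=\lambda_n(u^*)$; $(c)$ restricting this supremum to the one-parameter family $w=\alpha Lu^*$, $\alpha\in\C$ (together with the limit $\alpha\to\infty$, which recovers the direction $Lu^*$ itself) sweeps the unit sphere of $u^*\C+Lu^*\C$, so $\lambda_n(u^*)\geq\mu_2(u^*)$; $(d)$ in the degenerate case $w^*=0$ one has $K_n u^*=0$ (where $K_n$ is the off-diagonal block of $A_{|V_n}$), which makes $M(u^*)$ diagonal with entries $\lambda^*$ and a value $\leq d$, so $\mu_2(u^*)=\lambda^*$ directly. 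Combining $(a)$--$(d)$ yields the fundamental inequality $\lambda^*\geq\mu_2(u^*)$ for every eigenvalue of $A_{|V_n}$ above~$d$.

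Part~$(i)$ will then be immediate: the pointwise bound $\mu_2(v)\geq m_2''$ for every $v\in D(L)\setminus\{0\}$ gives $\sigma(A_{|V_n})\cap(d,m_2'')=\emptyset$ for every $n$, ruling out any $(P,L)$-spurious eigenvalue in $(d,m_2'')$. For part~$(ii)$ only the asymptotic bound $\liminf\mu_2\geq m_2'$ along weakly null unit sequences is available. With $\lambda\in(d,\min(m_2',b))$ putatively spurious, the fundamental inequality yields $\mu_2(u_n)\leq\lambda_n\leq\lambda+o(1)<b$ eventually, and I would split according to $\|u_n\|$. In the non-degenerate case where $\|u_n\|\geq c>0$ along a subsequence, $\tilde u_n:=u_n/\|u_n\|$ is a unit vector with $\tilde u_n\wto 0$ (because $u_n\wto 0$ and $\|u_n\|$ stays bounded below) and $\mu_2(\tilde u_n)=\mu_2(u_n)\leq\lambda+o(1)$ by homogeneity; hence $\liminf\mu_2(\tilde u_n)\leq\lambda<m_2'$, contradicting the definition of $m_2'$. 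In the degenerate case $\|u_n\|\to 0$ the sequence $u_n\to 0$ strongly and $\limsup\mu_2(u_n)\leq\lambda<b$, so the continuity property~(P) applied to $\{u_n\}\subset D(L)$ forces $\pscal{Au_n,u_n}\to 0$; combined with the identity
\[
\pscal{(A-\lambda)u_n,u_n}-\pscal{(A-\lambda)w_n,w_n}\to 0
\]
(obtained by testing $P_{V_n}(A-\lambda)\psi_n=(\lambda_n-\lambda)\psi_n\to 0$ against $u_n$ and against $w_n$, exactly as in Step~3 of the proof of Theorem~\ref{thm:P}) and with $\pscal{(A-\lambda)w_n,w_n}\leq(d-\lambda)\|w_n\|^2\leq 0$, this yields $\|w_n\|\to 0$, whence $\|\psi_n\|^2=\|u_n\|^2+\|w_n\|^2\to 0$, contradicting $\|\psi_n\|=1$.

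The main obstacle will be the degenerate case in part~$(ii)$. The information contained in $m_2'$ is intrinsically asymptotic and provides nothing once the candidate sequence $\{u_n\}$ collapses in norm; at that point the continuity $\pscal{Au_n,u_n}\to 0$ is not automatic from $u_n\to 0$ because $A$ may be unbounded, and property~(P) is precisely the hypothesis tailored to supply this missing continuity. The crucial link is that the fundamental inequality provided by the structural lemma delivers the bound $\limsup\mu_2(u_n)<b$ required to trigger~(P).
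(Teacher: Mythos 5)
Your proof is correct, and its global architecture coincides with the paper's: everything rests on the inequality $\mu_2(u_n)\leq\lambda_n+o(1)$ for the upper component $u_n$ of the Galerkin eigenvector; part $(i)$ then follows pointwise, and part $(ii)$ from the dichotomy $\norm{u_n}\to0$ versus $\norm{u_n}\geq c>0$, with \eqref{property_P} and the Step-3 identity of Theorem \ref{thm:P} disposing of the collapsing case exactly as you describe. Where you genuinely differ is in the proof of the central inequality. The paper follows Dolbeault--Esteban--S{\'e}r{\'e}: it introduces the functional $Q(x^-)=\pscal{(A-\lambda_n)(x_n^++x^-),x_n^++x^-}$ on $LV_n^+$, uses the Galerkin equation to rewrite it as $\pscal{(A-\lambda_n)(x^--x_n^-),x^--x_n^-}\leq(d-\lambda_n)\norm{x^--x_n^-}^2$, and inserts the near-maximizer $x^-=\theta_nLx_n^+$ of the $2\times2$ problem, obtaining the quantitative estimate \eqref{estim_Q2}. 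You instead use Cauchy interlacing on $u^*\C\oplus LV_n^+$: the compression to the codimension-one subspace $LV_n^+$ has spectrum $\leq d$, so at most one eigenvalue of that block exceeds $d$; being the maximum of the Rayleigh quotient on the block, it dominates $\mu_2(u^*)$ after sweeping the directions $u^*+\alpha Lu^*$ (the missing direction $Lu^*$ is harmless since its Rayleigh quotient is $\leq d$, as you note). The two arguments encode the same Talman-type min-max fact; yours is more purely linear-algebraic and yields the stronger form of $(i)$, namely $\sigma(A_{|V_n})\cap(d,m_2'')=\emptyset$ for every $n$, directly from the eigenvalue count, whereas the paper reaches the same conclusion through the extra positive term $(\lambda_n-d)\norm{\theta_nLx_n^+-x_n^-}^2$ in \eqref{estim_Q2}, which forces $x_n=0$.
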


\begin{remark}
The property \eqref{property_P} is a kind of compactness property at 0 of the set $\{x^+\in D(L)\ |\ \mu_2(x^+)<b\}$ for the quadratic-form norm of the operator $A_{|P\gH}$.
\end{remark}

\begin{remark}
Note that \eqref{property_P} holds true for $b=+\ii>d$ when $A_{|P\gH}$ is a bounded operator.
\end{remark}

Theorem \ref{thm:pollution_L_necessary} has many similarities with the characterization of eigenvalues in a gap which was proved by Dolbeault, Esteban and Séré in \cite{DolEstSer-00} (where our number $d=\sup{\sigma}(A_{(1-P)\gH})$ was denoted by `$a$'). In particular the reader should compare the assumptions $d<m_2'$ and $d<m_2''$ with (iii) at the bottom of p. 209 in \cite{DolEstSer-00}. The proof indeed uses many ideas of \cite{DolEstSer-00}. Note that \cite{DolEstSer-00} was itself inspired by an important Physics paper of Talman \cite{Talman-86} who introduced a minimax principle for the Dirac equation in order to avoid spectral pollution.

\begin{proof}
Assume that $\lambda\in \Spu(A,P,L)\cap(d,\ii)$. We consider a Weyl sequence $\{x_n\}$ like in Lemma \ref{lem:Weyl}, i.e. such that
\begin{equation}
P_{V_n^+\oplus LV_n^+}(A-\lambda_n)x_n=0
\label{equation_x_n} 
\end{equation}
for some $x_n\wto0$ with $\norm{x_n}=1$ and some $\lambda_n\to\lambda$. We write $x_n=x_n^++x_n^-$ where $x_n^-=Ly_n^+$ for some $y_n^+\in V_n^+$. Now, like in \cite{DolEstSer-00} we consider the following functional defined on $LV_n^+$:
$$Q(x^-):=\pscal{A(x_n^++x^-),x_n^++x^-}-\lambda_n\norm{x_n^++x^-}^2.$$
Using the equation $P_{V_n^+\oplus LV_n^+}(A-\lambda_n)x_n=0$, we deduce that
$$\forall x^-\in LV_n^+,\qquad Q(x^-)=\pscal{(A-\lambda_n)(x^--x_n^-),x^--x_n^-}.$$
By definition of $d$ we obtain
\begin{equation}
\forall x^-\in LV_n^+,\qquad Q(x^-)\leq (d-\lambda_n)\norm{x^--x_n^-}^2.
\label{estim_Q} 
\end{equation}

Consider the $2\times2$ matrix $M(x_n^+)$ of $A$ restricted to $x_n^+\oplus Lx_n^+$ and recall that $\mu_2(x_n^+)$ is by definition its second eigenvalue, hence
$$\mu_2(x_n^+)=\sup_{\theta\in\R}\frac{\pscal{A(x_n^++\theta Lx_n^+),x_n^++\theta Lx_n^+}}{\norm{x_n^++\theta Lx_n^+}^2}$$
(the sup is not necessarily attained). 
There exists $\theta_n\in\R$ such that for $n$ large enough
\begin{equation}
\frac{\pscal{A(x_n^++\theta_n Lx_n^+),x_n^++\theta_n Lx_n^+}}{\norm{x_n^++\theta_n Lx_n^+}^2}\geq \mu_2(x_n^+)-1/n.
\label{estim_mu_2_below} 
\end{equation}
Inserting $x^-=\theta_nLx_n^+$ in \eqref{estim_Q} we obtain for $n$ large enough,
\begin{equation}
\left(\mu_2(x_n^+)-\lambda_n-1/n\right)\left(\norm{x_n^+}^2+\theta_n^2\norm{Lx_n^+}^2\right)+(\lambda_n-d)\norm{\theta_nLx_n^+-x_n^-}^2\leq0.
\label{estim_Q2} 
\end{equation}

Let us assume we are in case $(i)$ for which $m_2''>d$. Using the obvious estimate $\mu_2(x_n^+)\geq m_2''$ we see that if $\lambda\in(d,m_2'')$, then for $n$ large enough we must have $x_n^+=\theta_nLx_n^+=\theta_nLx_n^+-x_n^-=0$, thus $x_n=0$ which is a contradiction with $\norm{x_n}=1$. Hence $\Spu(A,P,L)\cap(d,m_2'')=\emptyset$.

Let us now treat case $(ii)$ for which we assume $m_2'>d$ and that \eqref{property_P} holds for some $b>d$. Let $\lambda\in\Spu(A,P,L)\cap(d,\min(b,m_2'))$. From \eqref{estim_Q2} we see that necessarily $\mu_2(x_n^+)\leq \lambda_n+1/n$ (except if 
$x_n=0$ which is a contradiction). Therefore we have  $\limsup_{n\to\ii}\mu_2(x_n^+)<b$. 

Assume first that $x_n^+\to0$ strongly. Using our assumption \eqref{property_P}, we deduce that $\lim_{n\to\ii}\pscal{Ax_n^+,x_n^+}=0$. Next we argue like in the 3rd step of the proof of Theorem \ref{thm:P}. First, taking the scalar product of \eqref{equation_x_n} with $x_n^+$, we deduce that $\lim_{n\to\ii}\pscal{Ax_n^-,x_n^+}=0$. Taking then the scalar product with $x_n^-$ we deduce that
$$\lim_{n\to\ii}\pscal{(A-\lambda_n)x_n^-,x_n^-}=0.$$
As $\pscal{(A-\lambda_n)x_n^-,x_n^-}\leq (d-\lambda+o(1))\norm{x_n^-}^2$ and $d-\lambda<0$ we deduce that $x_n^-\to0$ which is a contradiction with $\norm{x_n}=1$.

Hence we must have $x_n^+\nrightarrow0$, which implies that $x_n^+\norm{x_n^+}^{-1}\wto0$, up to a subsequence. Therefore we have $\liminf_{n\to\ii}\mu_2(x_n^+)=\liminf_{n\to\ii}\mu_2(x_n^+\norm{x_n^+}^{-1})\geq m_2'$ by definition of $m_2'$. Inserting this information in \eqref{estim_Q2}, we again arrive at a contradiction, similarly as before.
This ends the proof of Theorem \ref{thm:pollution_L_necessary}.
\end{proof}

\subsection{Application to Dirac operator}
In this section, we consider the Dirac operator $A=D^0+V$ for a potential satisfying the assumptions \eqref{assumption_V_1} and \eqref{assumption_V_2} of Theorem \ref{thm:upper_lower_spinors} and 
\begin{equation}
\sup(V)<2
\label{ass_sup_V} 
\end{equation}
We will indeed for simplicity concentrate ourselves on the case for which either $V$ is bounded, or $V$ is a purely attractive Coulomb potential, $V(x)=-\kappa/|x|$, $0<\kappa<\sqrt3/2$. The generalization to potentials having several singularities is rather straightforward.

Like in Section \ref{sec:Dirac1}, we start by choosing $P=\cP$, the projector on the upper spinors as defined in \eqref{def_P_upper_lower_spinor}.
As already noticed in Section \ref{sec:Dirac1} we then have $\cP A\cP=1+V$ and $(1-\cP)A(1-\cP)=-1+V$ on the appropriate domain. This shows that the number $d$ introduced in the previous sections is
$d=-1+\sup V<1$
by \eqref{ass_sup_V}.

We will now study different balanced operators $L$ which we have found in the Quantum Chemistry litterature. Note that we can always see $L$ as an operator defined on $2$-spinors $D(L)\subset L^2(\R^3,\C^2)$ with values in the same Hilbert space $L^2(\R^3,\C^2)$, which we will do in the rest of the paper.

We will describe the polluted spectrum $\Spu(D^0+V,\cP,L)$ using the results presented in the previous sections. We note that 
the number $\mu_2(\phi)$ is the largest solution to the following equation \cite{DolEstSer-00}
\begin{equation}
 \pscal{(1+V)\phi,\phi}+\frac{\big(\Re\pscal{L\phi,\sigma\cdot (-i\nabla)\phi}\big)^2}{\pscal{(\mu+1-V)L\phi,L\phi}}=\mu\norm{\phi}^2
\end{equation}
where the denominator of the second term does not vanish when $\mu_2(\phi)>d=\sup(V)-1$.
Note the term on the left is decreasing with respect to $\mu$, whereas the term on the right is increasing with respect to $\mu$. Hence we have $\mu_2(\phi)\geq1$ if and only if
\begin{equation}
\pscal{V\phi,\phi}+\frac{\big(\Re\pscal{L\phi,\sigma\cdot (-i\nabla)\phi}\big)^2}{\pscal{(2-V)L\phi,L\phi}}\geq0
\label{Hardy_any_L}
\end{equation}
where the denominator of the second term does not vanish due to \eqref{ass_sup_V}.
Note that \eqref{Hardy_any_L} takes the form of a Hardy-type inequality similar to those which were found in \cite{DolEstSer-00,DolEstLosVeg-04}. In the following we will have to study this kind of inequalities for sequences $\phi_n$ which converge weakly to 0. The Hardy inequalities of \cite{DolEstSer-00,DolEstLosVeg-04} will indeed be an important tool as we will see below.

Concerning the choice of the operator $L$, several possibilities exist, although the main method is without any doubt the so-called \emph{kinetic balance} which we will study in the next section. All the methods from Quantum Chemistry or Physics are based on the following formula for an eigenfunction $(\phi,\chi)$ with eigenvalue $mc^2+\lambda$ (we reintroduce the speed of light $c$ and the mass $m$ for convenience) and which we have already formally derived before in Section \ref{sec:Dirac1}:
\begin{equation}
\chi=\frac{c}{2mc^2+\lambda-V}\sigma\cdot(-i\nabla)\phi.
\label{relation_upper_lower2}
\end{equation}
This equation suggests that for an eigenvector to be represented correctly, the basis of the lower spinor should contain $c(2mc^2+\lambda-V)^{-1}\sigma\cdot(-i\nabla)$ applied to the elements of the basis for the upper spinor. However we cannot choose in principle $L=c(2mc^2+\lambda-V)^{-1}\sigma\cdot(-i\nabla)$ because $\lambda$ is simply unknown. For this reason, one often takes the first order approximation in the nonrelativistic limit which is nothing but
$$\LKB=\frac{1}{2mc}\sigma\cdot(-i\nabla).$$
The choice of this balanced operator is (by far) the most widespread method in Quantum Physics and Chemistry. It will be studied in details in Section \ref{sec:kinetic_balance}.

It seems a well-known fact in Quantum Chemistry and Physics \cite{DyaFae-90,Pestka-03} that the kinetic balance method consisting in choosing $L=\LKB$ is not well-behaved for pointwise nuclei. The reason is that the behaviour at zero of $c(2mc^2+\lambda-V)^{-1}\sigma\cdot(-i\nabla)$ is not properly captured by $\sigma\cdot(-i\nabla)$, if $V(x)=-\kappa|x|^{-1}$. Indeed we will prove that the kinetic balance method allows to avoid pollution in the upper part of the spectrum for `regular' potentials, but not for Coulomb potentials, which justifies the aforementioned intuition.

To better capture the behaviour at zero, we study another method in Section \ref{sec:atomic_balance} which we call \emph{atomic balance}\footnote{The relation \eqref{relation_upper_lower} is usually called \emph{exact atomic balance}.} and which consists in choosing
$$\LAB=\frac{c}{2mc^2-V}\sigma\cdot(-i\nabla).$$
Although this operator does not depend on $\lambda$, it will be shown to completely avoid pollution in the upper part of the spectrum, even for Coulomb potentials. It is very likely that any other reasonable choice with the same behaviour at zero would have the same effect but we have not studied this question more deeply.

%

In the following we again work in units for which $m=c=1$.

\subsubsection{Kinetic Balance}\label{sec:kinetic_balance}
The most common method is the so-called \emph{kinetic balance} \cite{DraGol-81,Grant-82,Kutzelnigg-84,StaHav-84}. It consists in choosing as balanced operator 
\begin{equation}
\boxed{\LKB=-i\sigma\cdot\nabla}
\label{def:L_kinetic_balance} 
\end{equation}
We can for instance define $\LKB$ on the domain $D(\LKB)=C_0^\ii(\R^3,\C^2)$, in which case $\LKB$ satisfies all the assumptions of Section \ref{sec:balanced}.
Our main result is the following

\begin{theorem}[Kinetic Balance]\label{thm:kinetic_balance}
 \textbf{(i) Bounded potential.} Assume that $V\in L^p(\R^3)$ for some $p>3$, that $\lim_{|x|\to\ii}V(x)=0$, and that
\begin{equation}
-1+\sup(V)<1+\inf(V).
\label{cond_V}
\end{equation}
Then we have
$$\overline{\Spu(D^0+V,\cP,\LKB)}=[-1,-1+\sup V].$$
 
\medskip

\noindent \textbf{(ii) Coulomb potential.} Assume that $0<\kappa<\sqrt{3}/2$. Then we have
\begin{equation}
\overline{\Spu\left(D^0-\frac\kappa{|x|},\cP,\LKB\right)}=[-1,1].
\label{pollution_KB_Coulomb} 
\end{equation}
\end{theorem}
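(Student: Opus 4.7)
The theorem decomposes along the two cases. Both rely on the criteria of Theorems~\ref{thm:pollution_L_sufficient} and \ref{thm:pollution_L_necessary} applied to $A=D^0+V$, with projector $P=\cP$ and balance $L=\LKB=\sigma\cdot p$. The central object is the $2\times 2$ matrix of $A$ in the orthonormal basis $\{(\phi,0)/\|\phi\|,(0,\LKB\phi)/\|\LKB\phi\|\}$ spanning $\phi\C\oplus\LKB\phi\C$, namely
\[
\begin{pmatrix}a&b\\ b&c\end{pmatrix},\qquad a=1+\tfrac{\pscal{V\phi,\phi}}{\|\phi\|^2},\ c=-1+\tfrac{\pscal{V\LKB\phi,\LKB\phi}}{\|\LKB\phi\|^2},\ b=\tfrac{\|\nabla\phi\|}{\|\phi\|},
\]
whose eigenvalues are $\mu_{\pm}(\phi)=\tfrac{a+c}{2}\pm\sqrt{\tfrac{(a-c)^2}{4}+b^2}$.

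\textbf{Part (i) -- bounded $V$.} For the inclusion $\overline{\Spu}\cap(-1+\sup V,1)=\emptyset$ I apply Theorem~\ref{thm:pollution_L_necessary}(ii) with $d=\sup\sigma\bigl((-1+V)_{|(1-\cP)\gH}\bigr)=-1+\sup V$, which is strictly smaller than $1$ by \eqref{cond_V}, and any $b>1$. The continuity hypothesis \eqref{property_P} holds trivially since $V\in L^\infty$. To see $m_2'\geq 1$, I take an arbitrary sequence $\phi_n\wto 0$ in $L^2$ with $\|\phi_n\|=1$ and split into two cases. If $\|\nabla\phi_n\|\to\infty$ then $\mu_2(\phi_n)\geq b\to\infty$ simply from the off-diagonal domination. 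Otherwise $\phi_n\wto 0$ in $H^1$, and a profile-decomposition argument exploiting the Rellich compactness $H^1_{\rm loc}\hookrightarrow L^2_{\rm loc}$ together with $V\to 0$ at infinity yields both $\pscal{V\phi_n,\phi_n}\to 0$ and $\pscal{V\LKB\phi_n,\LKB\phi_n}\to 0$, so that $\mu_2(\phi_n)\to\sqrt{1+b_\infty^2}\geq 1$.

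For the opposite inclusion $[-1,-1+\sup V]\subseteq\overline{\Spu}$ I turn to Theorem~\ref{thm:pollution_L_sufficient}. The endpoint $\mu=-1$ is realized by translation sequences $\phi_n=\zeta(x-x_n)$ with $|x_n|\to\infty$ and $\|\nabla\zeta\|/\|\zeta\|$ small. The other endpoint $\mu=-1+v$ for $v$ close to $\sup V$ is produced from a concentrating bump $\chi_n$ at a Lebesgue point of $V$ at which $V$ attains a value $v$, together with its Riesz-type preimage $\phi_n^{(v)}=(\sigma\cdot p)/|p|^2\,\chi_n$ (zero mean ensuring $\phi_n^{(v)}\in L^2$ and weak convergence to $0$), giving $a\to 1+v$ and $c\to -1+v$. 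Interior values are then obtained by the convex-combination trick of the proof of Theorem~\ref{thm:general}: between the endpoint sequences $\phi_n^{(0)}$ and $\phi_n^{(v)}$, form $\phi_n(\theta)=\cos\theta\,\phi_n^{(0)}+\sin\theta\,\phi_n^{(v)}$; the continuity in $\theta$ of the smaller eigenvalue combined with the intermediate value theorem yields, for every prescribed $\mu\in(-1,-1+v)$, a $\theta_n$ with $\mu_1(\phi_n(\theta_n))\to\mu$. Letting $v\nearrow\sup V$ fills out the whole interval in the closure.

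\textbf{Part (ii) -- Coulomb.} Here the $-1$-homogeneity of $V=-\kappa/|x|$ suggests dilations $\phi_n(x)=n^{3/2}\zeta(nx)$ for a profile $\zeta\in C_c^\infty(\R^3,\C^2)$ of vanishing mean, which guarantees $\phi_n/\|\phi_n\|\wto 0$ and $\LKB\phi_n/\|\LKB\phi_n\|\wto 0$. A change of variables gives
\[
a_n=1-\kappa n D_1(\zeta),\quad c_n=-1-\kappa n D_2(\zeta),\quad b_n=n\gamma(\zeta),
\]
with the natural scale-invariant functionals $D_1=\int|\zeta|^2|y|^{-1}dy/\|\zeta\|^2$, $D_2=\int|\sigma\cdot\nabla\zeta|^2|y|^{-1}dy/\|\nabla\zeta\|^2$ and $\gamma=\|\nabla\zeta\|/\|\zeta\|$. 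All three quantities diverge linearly in $n$, so a careful subleading expansion is required: $\mu_2(\phi_n)$ has a finite limit precisely when the scale-invariant balance $\kappa^2D_1(\zeta)D_2(\zeta)=\gamma(\zeta)^2$ holds, in which case $\mu_2(\phi_n)\to(D_2-D_1)/(D_1+D_2)$. By varying the shape of $\zeta$ along the codimension-one constraint surface $\{\kappa^2D_1D_2=\gamma^2\}$ (for instance starting from the exponential family $e^{-\lambda|y|}$ and adding dipole-type perturbations to tune the ratio $D_2/D_1$), this limit covers a dense subset of $(-1,1)$, whence $\overline{\Spu}\supseteq[-1,1]$; the reverse inclusion is trivial since pollution only occurs in the gap.

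\textbf{Main obstacle.} Both lower-bound constructions are the delicate steps. The obstruction is the tension between the uncertainty principle -- which forces $b=\|\nabla\phi\|/\|\phi\|$ to blow up as $\phi$ concentrates -- and the requirement that the $V$-weighted averages $\alpha,\beta$ approach extremal values of $V$. In (i) this is circumvented by combining the Riesz preimage construction with the convex-combination trick, trading exact attainment of an extreme $\mu$ for density of pollution in the interval. In (ii) it is precisely the Coulomb singularity that enables a nontrivial cancellation between $a_n$, $c_n$ and $b_n$ (encoded in the relation $\kappa^2D_1D_2=\gamma^2$), and the main work consists in verifying that the ratio $(D_2-D_1)/(D_1+D_2)$ indeed sweeps all of $(-1,1)$ as $\zeta$ varies on the constraint surface.
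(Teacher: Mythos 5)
The no-pollution half of part (i) is essentially the paper's argument (Theorem~\ref{thm:pollution_L_necessary}(ii) with $d=-1+\sup V$, plus $H^1$-boundedness and Rellich to kill $\pscal{V\phi_n,\phi_n}$); note that your additional claim $\pscal{V\LKB\phi_n,\LKB\phi_n}\to0$ does not follow from $H^1$-boundedness alone (the gradient can concentrate), but it is also not needed, since $\mu_2\geq 1+\pscal{V\phi_n,\phi_n}\norm{\phi_n}^{-2}$ already gives $m_2'\geq1$. The two \emph{pollution} constructions, however, have genuine gaps. In part (i), your endpoint sequence $\phi_n^{(v)}=(\sigma\cdot p)|p|^{-2}\chi_n$ with $\chi_n$ an $L^2$-normalized concentrating bump does \emph{not} give $\mu_1(\phi_n^{(v)})\to-1+v$: one has $\norm{\phi_n^{(v)}}\sim n^{-1}$ while $\norm{\LKB\phi_n^{(v)}}=\norm{\chi_n}\sim1$, so the off-diagonal entry $\norm{\LKB\phi_n^{(v)}}/\norm{\phi_n^{(v)}}$ of the $2\times2$ matrix blows up and $\mu_1\to-\ii$, $\mu_2\to+\ii$. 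The convex-combination trick then interpolates between $-\ii$ and (roughly) $-1$, i.e.\ it only fills values below the gap. This is exactly the uncertainty-principle tension you name, and it is resolved in the paper by a \emph{two-scale} test function: a concentrating bump carrying the gradient mass (which fixes the lower-right entry at $-1+V(x_0)$) superposed with a spread-out bump of amplitude $\delta^{1/2}$ carrying the $L^2$ mass, so that the off-diagonal entry equals $(D/N\delta)^{1/2}$ and is \emph{tunable}; letting $\delta$ range over $(0,\ii)$ makes $\mu_1$ sweep $(-\ii,-1+V(x_0))$.

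In part (ii) the single-profile dilation ansatz $\phi_n=n^{3/2}\zeta(nx)$ on the constraint surface $\kappa^2D_1D_2=\gamma^2$ cannot work, even granting the unverified ``sweeping'' claim: by Hardy's inequality $\int|\zeta|^2|x|^{-1}\leq2\norm{\zeta}\,\norm{\nabla\zeta}$, so $D_1\leq2\gamma$, and on the constraint surface this forces $D_1/D_2\leq4\kappa^2<3$, whence
\begin{equation*}
\frac{D_2-D_1}{D_2+D_1}\;\geq\;\frac{1-4\kappa^2}{1+4\kappa^2}\;>\;-\frac12 .
\end{equation*}
Thus your limits can never enter $(-1,-1/2)$ (and for small $\kappa$ they are confined to a neighbourhood of $+1$), so the lower part of the gap is unreachable. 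The paper's proof avoids hitting each value directly: it exhibits one genuinely two-scale sequence $n^{1/2}\zeta(nx)+(\delta n)^{1/2}\zeta(\delta nx)$ (supported on disjoint annuli, so it is not a dilation of a fixed profile) for which $\det A_n\to+\ii$ while $\mu_1\to-\ii$, forcing $\mu_2\to-\ii$, and one spreading sequence with $\mu_2\to1$; the convexity of the attainable set $K_2$ established in the proof of Theorem~\ref{thm:pollution_L_sufficient} then fills all of $(-1,1)$. To repair your argument you would need to abandon the scale-invariant one-profile family and allow mass at several scales, at which point you are reconstructing the paper's computation.
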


\begin{remark}
The conclusion \eqref{pollution_KB_Coulomb} also holds if $V$ is such that $V\in L^p(\R^3)\cap L^\ii(\R^3\setminus B(x_0,r))$ for some $p>3$ and 
$$-\frac{\kappa}{|x-x_0|}\leq V(x)\leq -\frac{\kappa'}{|x-x_0|^a}\text{ on } B(x_0,r)$$
for some $0<a\leq 1$ and some $\kappa<\sqrt{3}/2$, as is obviously seen from the proof.
\end{remark}

We have proved that the widely used kinetic balance method allows to \emph{avoid pollution in the upper part of the gap for smooth potentials}, hence for instance for $V=-\rho\ast|x|^{-1}$ where $\rho\geq0$ is the distribution of charge for smeared nuclei. However, the kinetic balance method \emph{does not avoid spectral pollution in the case of pointwise nuclei} (Coulomb potential).

\begin{proof}
\paragraph{Case \textbf{\textit{(i)}}.}
We assume that $V\in L^p(\R^3)\cap L^\ii(\R^3)$ satisfies \eqref{cond_V}. Clearly we have $\sup_\phi\mu_1(\phi)\leq -1+\sup(V)=:d$ and $m_2''=\inf_\phi\mu_2(\phi)\geq 1+\inf(V)$. Hence we necessarily have $m_2'\geq m_2''>d$ as requested by Theorem \ref{thm:pollution_L_necessary}. Also since $V$ is bounded by assumption, $(D^0+V)_{|\cP L^2(\R^3,\C^4)}=1+V$ is bounded, hence \eqref{property_P} holds for $b=1$. We deduce that
$$\Spu(D^0+V,\cP,\LKB)\cap(c,1)\subset [m_2',1).$$

Now we claim that $m'_2\geq1$. Indeed, let us argue by contradiction and assume that there exists a sequence $\phi_n\in C_0^\ii(\R^3,\C^2)$ such that $\phi_n\wto0$ in $L^2$, $\norm{\phi_n}=1$ and $\mu_2(\phi_n)\to\lambda\in(c,1)$. The number $\mu_2(\phi_n)$ is characterized by the equality
\begin{equation}
 \int_{\R^3}V|\phi_n|^2+\frac{\displaystyle\left(\int_{\R^3}|\sigma\cdot\nabla\phi_n|^2\right)^2}{\displaystyle\int_{\R^3}(\mu_2(\phi_n)+1-V)|\sigma\cdot\nabla\phi_n|^2}=\mu_2(\phi_n)-1.
\end{equation}
Since $V$ is bounded and $\norm{\phi_n}=1$ we get
$$\left|\int_{\R^3}|\sigma\cdot\nabla\phi_n|^2\right|^2\leq (1-\lambda+o(1)+\norm{V}_{\ii})(1-\lambda+o(1)+\norm{V}_{\ii})\int_{\R^3}|\sigma\cdot\nabla\phi_n|^2$$
which proves that $\{\phi_n\}$ is bounded in $H^1(\R^3,\C^2)$. We deduce that $\phi_n\wto0$ in $L^p(\R^3,\C^2)$ weakly for all $2\leq p\leq 6$ and strongly in $L^p_{\rm loc}(\R^3,\C^2)$ for all $2\leq p< 6$. Under our assumption on $V$, this shows that $\lim_{n\to\ii}\int V|\phi_n|^2=0$. For $n$ large enough, we thus have
\begin{equation}
\frac{\displaystyle\left(\int_{\R^3}|\sigma\cdot\nabla\phi_n|^2\right)^2}{\displaystyle\int_{\R^3}(\mu_2(\phi_n)+1-V)|\sigma\cdot\nabla\phi_n|^2}\leq \frac{\lambda-1}2<0
\end{equation}
which is a contradiction since by assumption $\mu_2(\phi_n)=\lambda+o(1)>d\geq V-1$. Hence we have proved that $\Spu(D^0+V,\cP,\LKB)\cap(d,1)=\emptyset$.

Now we assume $\sup(V)>0$ (otherwise there is nothing else to prove since $d=-1$) and prove that $(-1,-1+\sup(V)]\subset\overline{\Spu(D^0+V,\cP,\LKB)}$.
Let $x_0$ be a Lebesgue point of $V$, with $V(x_0)>0$ (hence $V(x)\geq0$ on a neighborhood of $x_0$). Consider a smooth radial nonnegative function $\zeta$ which is equal to 1 on the annulus $\{2\leq |x|\leq 3\}$ and 0 outside the annulus $\{1\leq |x|\leq 4\}$. We define for some fixed $\delta>0$
$$\phi_n(x)=\left(n^{1/2}\zeta\left(n(x-x_0)\right)+\frac{\delta^{1/2}}{(4n)^{3/2}}\zeta\left(\frac{x-x_0}{4n}\right)\right)\left(\begin{array}{c}1\\ 0 \end{array}\right) $$
where we have chosen the scaling in such a way that the above two functions have a disjoint support. We note that 
$$\int|\phi_n|^2=\delta N+O(n^{-2}),\qquad \int|\sigma\cdot\nabla\phi_n|^2=D+O(\delta n^{-2})$$
where we have introduced $N:=\int\zeta^2$ and $D=\int|\nabla\zeta|^2$.
Similarly, we have, using \eqref{Lebesgue} and that $V\to0$ at infinity,
$$\pscal{(1+V)\phi_n,\phi_n}=\delta (N+o(1))+O(n^{-2}),$$
$$\pscal{(-1+V)\LKB\phi_n,\LKB\phi_n}=(-1+V(x_0))D+O(n^{-2}).$$
Hence the matrix of $D^0+V$ in the basis $\{(\phi_n,0),(0,\LKB\phi_n)\}$ converges as $n\to\ii$ towards the following $2\times2$ matrix:
$$\left(\begin{matrix}
1 & \left(\frac{D}{N\delta}\right)^{1/2}\\
\left(\frac{D}{N\delta}\right)^{1/2} & -1+V(x_0)
\end{matrix} \right).$$
Eventually we note that $\phi_n\norm{\phi_n}^{-1}\wto0$ and $\sigma\cdot\nabla\phi_n\norm{\sigma\cdot\nabla\phi_n}^{-1}\wto0$. Hence, varying $\delta$ and $x_0$, we see that $M_1=-1+\sup(V)$ and $m_1\leq-1$ where $m_1$ and $M_1$ were defined in \eqref{def_m_1}. This ends the proof of \textbf{\textit{(i)}}, by Theorem \ref{thm:pollution_L_sufficient}.

\paragraph{Case \textbf{\textit{(ii)}}.} We will use again Theorem \ref{thm:pollution_L_sufficient}. More precisely we will show that $m_2=-\ii<-1$ and $M_2\geq1$, where $m_2$ and $M_2$ have been defined in \eqref{def_m}. This time we define
\begin{equation}
\phi_n(x)=\left(n^{1/2}\zeta\left(nx)\right)+(\delta n)^{1/2}\zeta\left(\delta nx)\right)\right)\left(\begin{array}{c}1\\ 0 \end{array}\right)  
\label{def_phi_poll_Coulomb}
\end{equation}
where $\delta\geq4$ is a fixed constant (note the above two functions then have a disjoint support). Similarly as before, we compute
$$\int|\phi_n|^2=\frac{1+\delta^{-2}}{n^2}N,\qquad \int|\sigma\cdot\nabla\phi_n|^2=2D,$$
$$\pscal{(1+V)\phi_n,\phi_n}=\frac{1+\delta^{-2}}{n^2}N-\kappa\frac{1+\delta^{-1}}{n}C_1,$$
$$\pscal{(-1+V)\LKB\phi_n,\LKB\phi_n}=-2D-\kappa(1+\delta)nC_2,$$
where $N$ and $D$ are defined as above and
$$C_1=\int_{\R^3}\frac{|\zeta(x)|^2}{|x|}dx,\qquad C_2=\int_{\R^3}\frac{|\sigma\cdot\nabla\zeta(x)|^2}{|x|}dx.$$
Hence, the matrix of $D^0-\kappa|x|^{-1}$ in the associated basis reads
$$A_n(\delta):=\left(\begin{matrix}
1-\kappa n\frac{1+\delta^{-1}}{1+\delta^{-2}}\frac{C_1}{N} & n\left(\frac{2D}{(1+\delta^{-2})N}\right)^{1/2}\\
n\left(\frac{2D}{(1+\delta^{-2})N}\right)^{1/2} & -1-\kappa(1+\delta)n\frac{C_2}{2D}
\end{matrix}\right).$$
Let us now choose $\delta\geq4$ large enough such that
$\kappa^2(1+\delta^{-1})(1+\delta)C_1C_2-2D^2>0.$
Then
\begin{equation}
\det(A_n(\delta))=\frac{\kappa^2(1+\delta^{-1})(1+\delta)C_1C_2-2D^2}{(1+\delta^{-2})ND}n^2+O(n)
\label{asymp_determinant} 
\end{equation}
hence $\det(A_n(\delta))\to+\ii$ as $n\to\ii$. Note that the first eigenvalue $\mu_1(\phi_n)$ of $A_n(\delta)$ satisfies
$$\mu_1(\phi_n)\leq -1-\kappa(1+\delta)n\frac{C_2}{2D}$$
hence $\mu_1(\phi_n)\to-\ii$ as $n\to\ii$. Therefore we must have $\mu_2(\phi_n)<0$ for $n$ large enough. More precisely
$$\mu_1(\phi_n)\geq -1-\kappa(1+\delta)n\frac{C_2}{2D}-n\left(\frac{2D}{(1+\delta^{-2})N}\right)^{1/2}$$
therefore, multiplying by $\mu_2(\phi_n)$ and using \eqref{asymp_determinant} we deduce that
$$\mu_2(\phi_n)\leq -\frac{\kappa^2(1+\delta^{-1})(1+\delta)C_1C_2-2D^2}{\kappa(1+\delta)(1+\delta^{-2})C_2N/2+D\left(2(1+\delta^{-2})N\right)^{1/2}}n+O(1),$$
which eventually proves that $\mu_2(\phi_n)\to-\ii$.
As it is clear that $\phi_n\norm{\phi_n}^{-1}\wto0$ and $\sigma\cdot\nabla\phi_n\norm{\sigma\cdot\nabla\phi_n}^{-1}\wto0$, we have shown that $m_2=-\ii$.

The proof that $M_2\geq1$ is simpler, it suffices to use 
$$\phi_n(x)=n^{-3/2}\zeta\left(\frac{x}{n}\right)$$
whose associated matrix of $A$ reads
$$A_n:=\left(\begin{matrix}
1-\kappa\frac{C_1}{Nn} & \sqrt{\frac{D}{N}}\frac{1}{n}\\
\sqrt{\frac{D}{N}}\frac{1}{n} & -1-\kappa\frac{C_2}{Dn}
\end{matrix}\right).$$
Therefore the result follows from Theorem \ref{thm:pollution_L_sufficient}.
\end{proof}

\subsubsection{Atomic Balance}\label{sec:atomic_balance}
We have proved in the previous section that the kinetic balance method allows to avoid spectral pollution in the case of a smooth potential, but that it does not solve the pollution issue for a Coulomb potential. In this section we consider another method called \emph{atomic balance}. It consists in taking
\begin{equation}
\boxed{\LAB=\frac{1}{2-V}\sigma\cdot(-i\nabla)}
\label{def:L_atomic_balance} 
\end{equation}
where we recall that we have assumed $2>\sup(V)$. Provided that $V$ is smooth enough, we can define $\LAB$ on the domain $D(\LAB)=C_0^\ii(\R^3\setminus\{0\},\C^2)$, in which case $\LAB$ satisfies all the assumptions of Section \ref{sec:balanced}.
Our main result is the following

\begin{theorem}[Atomic Balance]\label{thm:atomic_balance}
Let $V$ be such that $\sup(V)<2$, $(2-V)^{-2}\nabla V\in L^\ii(\R^3)$ and
$$-\frac{\kappa}{|x|}\leq V(x)$$
for some $0\leq\kappa<\sqrt{3}/2$.
We also assume that the positive part $\max(V,0)$ is in $L^p(\R^3)$ for some $p>3$ and that $\lim_{|x|\to\ii}V(x)=0$. 
Then we have
$$\overline{\Spu(D^0+V,\cP,\LAB)}=[-1,-1+\sup V].$$
\end{theorem}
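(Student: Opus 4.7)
The plan is to combine Theorems \ref{thm:pollution_L_sufficient} and \ref{thm:pollution_L_necessary} with $d=\sup\sigma((D^0+V)_{|(1-\cP)\gH})=-1+\sup V$, which is strictly less than $1$ by the assumption $\sup V<2$. The key structural feature of the atomic balance operator $\LAB=(2-V)^{-1}\sigma\cdot(-i\nabla)$ is that it is the \emph{exact} reduction of the lower to the upper spinor at the spectral threshold $\lambda=1$: setting $\chi=\LAB\phi$ amounts to the exact Dirac relation \eqref{relation_upper_lower2} at $mc^2+\lambda=1$. This is what ultimately makes the upper part of the gap pollution-free, even in the Coulomb case, in sharp contrast with the kinetic balance.

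For the inclusion $\overline{\Spu(D^0+V,\cP,\LAB)}\subseteq[-1,-1+\sup V]$, I shall apply Theorem \ref{thm:pollution_L_necessary}. A direct computation of the $2\times 2$ matrix $M(\phi)$ of $D^0+V$ restricted to $\mathrm{span}\{(\phi,0),(0,\LAB\phi)\}$, using $|\LAB\phi(x)|^2=(2-V)^{-2}|\sigma\cdot\nabla\phi|^2$, yields
\begin{equation*}
\det(M(\phi)-\mathrm{Id})=-\frac{I(K+I)}{\|\phi\|^2\,\|\LAB\phi\|^2},\qquad I:=\int\frac{|\sigma\cdot\nabla\phi|^2}{2-V}dx,\quad K:=\int V|\phi|^2dx,
\end{equation*}
so that $\mu_2(\phi)\geq 1$ is equivalent to the Hardy-Dirac inequality $K+I\geq 0$. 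Via the Dolbeault-Esteban-S\'er\'e variational principle, this is exactly $\mu(\phi)\geq 1$, which \emph{fails} uniformly in $\phi$ in the Coulomb case (since $\lambda_1(D^0-\kappa/|x|)=\sqrt{1-\kappa^2}<1$), so part~(i) of Theorem \ref{thm:pollution_L_necessary} is unavailable. Instead, I will establish the weaker bound $m_2'\geq 1$ via a concentration-compactness dichotomy: for any sequence $\phi_n\rightharpoonup 0$ in $L^2$ with $\|\phi_n\|=1$, either mass escapes to infinity or concentrates at a regular point of $V$ (where the Coulomb tail is negligible and the limit of $\mu_2(\phi_n)$ is $\geq 1$), or mass concentrates at the singularity $0$ (where a rescaling argument together with the DELV-type bound $\int|y||\sigma\cdot\nabla\psi|^2\geq\kappa^2\int|\psi|^2/|y|$, sharp for $\kappa<\sqrt{3}/2$ by \cite{DolEstLosVeg-04}, forces $\mu_2(\phi_n)\to+\infty$). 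The continuity property required by Theorem \ref{thm:pollution_L_necessary}(ii) will then follow for some $b>1$ from the fact that any sequence with $\mu_2(\phi_n)$ bounded above is automatically bounded in $H^{1/2}$, so Rellich compactness kills $\pscal{V\phi_n,\phi_n}$ whenever $\phi_n\to 0$ strongly in $L^2$. Combining these, Theorem \ref{thm:pollution_L_necessary}(ii) gives $\Spu(D^0+V,\cP,\LAB)\cap(d,1)=\emptyset$, and since $\R\setminus(-1,1)\subseteq\sigma(D^0+V)$ prevents any spurious eigenvalue outside the gap, the upper inclusion follows.

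For $\overline{\Spu}\supseteq[-1,-1+\sup V]$ (non-trivial only when $\sup V>0$), the construction from the proof of Theorem \ref{thm:kinetic_balance}(i) transfers almost verbatim. Fix a Lebesgue point $x_0\neq 0$ of $V$ with $V(x_0)=v$ close to $\sup V$, let $\zeta$ be smooth and supported in an annulus avoiding $0$, and set
\begin{equation*}
\phi_n(x)=\Bigl(n^{1/2}\zeta(n(x-x_0))+\tfrac{\delta^{1/2}}{(4n)^{3/2}}\zeta\bigl(\tfrac{x-x_0}{4n}\bigr)\Bigr)\begin{pmatrix}1\\0\end{pmatrix},
\end{equation*}
which lies in $D(\LAB)$ for $n$ large. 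Using the Lebesgue differentiation theorem at $x_0$ and the decay $V\to 0$ at infinity, the normalized $2\times 2$ matrix of $D^0+V$ in $\{(\phi_n,0)/\|\phi_n\|,(0,\LAB\phi_n)/\|\LAB\phi_n\|\}$ converges to
\begin{equation*}
\begin{pmatrix}1 & \sqrt{D/(\delta N)} \\ \sqrt{D/(\delta N)} & -1+v\end{pmatrix},\qquad N=\int\zeta^2,\ D=\int|\nabla\zeta|^2,
\end{equation*}
whose lower eigenvalue sweeps $(-\infty,-1+v]$ as $\delta$ varies in $(0,\infty)$. Theorem \ref{thm:pollution_L_sufficient} then forces $(-1,-1+v]\subseteq\overline{\Spu}$, and letting $v\nearrow\sup V$ followed by closure completes the inclusion. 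The main obstacle throughout is the failure of the uniform Hardy-Dirac inequality $K+I\geq 0$ for purely Coulombic $V$: it rules out the direct route of part~(i) of Theorem \ref{thm:pollution_L_necessary} and forces the use of the weak-convergence infimum $m_2'$, whose justification depends on sharp compactness estimates near the Coulomb singularity and thus on the DELV threshold $\kappa<\sqrt{3}/2$ appearing in the hypotheses.
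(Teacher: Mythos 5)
Your overall architecture matches the paper's: Theorem~\ref{thm:pollution_L_sufficient} with the two-bump trial function $\phi_n$ for the lower inclusion, and Theorem~\ref{thm:pollution_L_necessary}(ii) (since part (i) only yields $m_2''\geq1-2\tfrac{1-\sqrt{1-\kappa^2}}{1+\sqrt{1-\kappa^2}}<1$) for the upper inclusion, with the key inputs being the bound $m_2'\geq1$ and property~\eqref{property_P}. Your determinant identity $\det(M(\phi)-\mathrm{Id})=-I(K+I)/(\|\phi\|^2\|\LAB\phi\|^2)$ is correct, as is the lower-inclusion limit matrix. However, two of the central technical steps are underspecified or wrong in a way that matters.

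First, the concentration-compactness trichotomy you propose for $m_2'\geq1$ --- escape to infinity, concentration at a regular point, concentration at $0$ --- is not exhaustive: the dichotomy case (mass splitting, or partial concentration plus partial escape) is not handled, and it cannot simply be reduced to the other cases by iteration because $\mu_2$ of a sum is not easily related to the $\mu_2$'s of the pieces. The paper sidesteps this by using a \emph{parallel} IMS partition of unity $\xi_0^2+\xi_1^2+\xi_2^2=1$ (near the Coulomb singularity, middle annulus, infinity) and estimating all three regions at once: the $\xi_0$-piece via the Hardy inequality \eqref{Hardy_DES} applied with $m\leftrightarrow\epsilon$ and $\kappa\leftrightarrow\kappa+\epsilon<\sqrt3/2$, the $\xi_2$-piece via the decay of $V$, and the $\xi_1$-piece by strong local $L^2$ convergence, producing a coercivity estimate $\tfrac{1-\ell}3(\|\phi_n^0\|^2+\|\phi_n^2\|^2)\leq o(1)$ that contradicts $\|\phi_n\|=1$ without ever branching on where the mass goes. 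You would need to adopt something of this structure; the pure dichotomy sketch has a genuine hole. Relatedly, the inequality you quote, $\int|y|\,|\sigma\cdot\nabla\psi|^2\geq\kappa^2\int|\psi|^2/|y|$, is indeed the $c\to0$ limit of \eqref{Hardy_DES_bis}, but it holds for all $\kappa\leq1$ and the phrase ``sharp for $\kappa<\sqrt3/2$'' is not meaningful as stated; the role of $\sqrt3/2$ in the proof is to allow a room $\epsilon>0$ with $\kappa+\epsilon<\sqrt3/2$ in the IMS estimate near $0$.

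Second, your justification of the continuity property \eqref{property_P} does not close the Coulomb singularity. Boundedness of $\mu_2(\phi_n)$ together with $\phi_n\to0$ in $L^2$ does give, as you say, control on the kinetic energy --- but ``bounded in $H^{1/2}$'' is too weak: by Kato's inequality $\int\frac{|\phi_n|^2}{|x|}\lesssim\|\phi_n\|_{\dot H^{1/2}}^2$, a bounded $\dot H^{1/2}$ norm bounds the Coulomb energy, it does not send it to $0$, and Rellich only gives strong $L^2_{\rm loc}$ convergence which again does not dominate the weight $|x|^{-1}$ near the origin. What the paper actually proves (Lemma~\ref{lem:calcul_m_2_AB}) is the stronger \emph{vanishing} $\int\frac{|\sigma\cdot\nabla\phi_n|^2}{2+\kappa/|x|}\to0$, obtained directly from $\mu_2(\phi_n)\to\ell$ and \eqref{estim_above_mu_phi}, and then a second application of \eqref{Hardy_DES} (with $m\leftrightarrow2/\kappa$, $\kappa\leftrightarrow1$) in \eqref{estim_below_Coulomb} to convert this weighted $H^1$ vanishing into $\int(\kappa/|x|+V)|\phi_n|^2\to0$, hence $\int V|\phi_n|^2\to0$. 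Your Rellich sketch would need to be replaced by this Hardy-driven estimate to be correct.
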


\begin{remark}\it
We define the operator $\LAB$ on $D(\LAB)=C^\ii_0(\R^3\setminus\{0\},\C^2)$. Note that under our assumptions on $V$ we have that $\LAB D(\LAB)$ is dense in $H^1(\R^3,\C^2)$ for the associated Sobolev norm, hence $\LAB$ satisfies the properties required in Section \ref{sec:balanced_nec}. The above conditions on $V$ are probably far from being optimal.
\end{remark}

\begin{remark}\it
The choice of `2' in the definition of $\LAB$ is somewhat arbitrary. It can be seen that our result still holds true if $\sup(V)<1$ and $\LAB$ is replaced by $(\theta-V)^{-1}\sigma\cdot p$ for some fixed $\theta\geq1$. The proof is the same when $\theta\geq2$ but it is slightly more technical when $1\leq\theta<2$. 
\end{remark}

As we will explain in the proof, a very important tool is the Hardy-type inequality:
\begin{equation}
\int_{\R^3}\frac{c^2|\sigma\cdot\nabla\phi(x)|^2}{c^2+\frac\nu{|x|}+\sqrt{c^4-\nu^2c^2}}dx+(c^2-\sqrt{c^4-\nu^2c^2})\int_{\R^3}|\phi(x)|^2dx\geq \nu\int_{\R^3}\frac{|\phi(x)|^2}{|x|}dx.
\label{Hardy_DES_bis}
\end{equation}
This inequality was obtained in \cite{DolEstSer-00} by using a min-max characterization of the first eigenvalue of $-ic\alp\cdot\nabla+c^2\beta-\nu/|x|$. Indeed \eqref{Hardy_DES_bis} is an equality when $\phi$ is equal to the upper spinor of the eigenfunction corresponding to the first eigenvalue in $(-1,1)$ of $-ic\alp\cdot\nabla+c^2\beta-\nu/|x|$. The inequality \eqref{Hardy_DES_bis} was then proved by a direct analytical method in \cite{DolEstLosVeg-04}.
Introducing $m=c(1+\sqrt{1-(\nu/c)^2})$ and $\kappa=\nu/c$ we can rewrite \eqref{Hardy_DES_bis} in the following form
\begin{equation}
\int_{\R^3}\frac{|\sigma\cdot\nabla\phi(x)|^2}{m+\frac\kappa{|x|}}dx+m\frac{1-\sqrt{1-\kappa^2}}{1+\sqrt{1-\kappa^2}}\int_{\R^3}|\phi(x)|^2dx\geq \kappa\int_{\R^3}\frac{|\phi(x)|^2}{|x|}dx.
\label{Hardy_DES}
\end{equation}
We now provide the proof of Theorem \ref{thm:atomic_balance}.

\begin{proof}
Let us first prove that when $\sup(V)>0$, then we have $(-1,-1+\sup V]\subset \overline{\Spu(D^0+V,\cP,\LAB)}$.
The proof is indeed the same as that of Theorem \ref{thm:kinetic_balance}: we define for some fixed $\delta>0$
$$\phi_n(x)=\left(n^{1/2}\zeta\left(n(x-x_0)\right)+\frac{\delta^{1/2}}{(4n)^{3/2}}\zeta\left(\frac{x-x_0}{4n}\right)\right)\left(\begin{array}{c}1\\ 0 \end{array}\right),$$
where $x_0$ is a Lebesgue point of $V$ such that $0<V(x_0)<2$. One can prove that the matrix of $D^0+V$ in  $\{(\phi_n,0),(0,\LAB\phi_n)\}$ converges as $n\to\ii$ towards the following $2\times2$ matrix:
$$\left(\begin{matrix}
1 & \left(\frac{D}{N\delta}\right)^{1/2}\\
\left(\frac{D}{N\delta}\right)^{1/2} & -1+V(x_0)
\end{matrix} \right).$$
Hence we have again, by Theorem \ref{thm:pollution_L_sufficient}, $(-1,-1+\sup V]\subset \overline{\Spu(D^0+V,\cP,\LAB)}$.

The second part consists in proving that there is no spectral pollution above $-1+\sup(V)$. 
As a first illustration of the usefulness of the Hardy-type inequality \eqref{Hardy_DES}, we start by proving the following
\begin{lemma}\label{lem:estim_mu_2}
We have 
\begin{equation}
m_2''=\inf_{\phi\in D(\LAB)}\mu_2(\phi)\geq 1-2\frac{1-\sqrt{1-\kappa^2}}{1+\sqrt{1-\kappa^2}}.
\label{estim_mu_2_kappa}
\end{equation}
\end{lemma}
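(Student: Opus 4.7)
The plan is to work directly with the characterization of $\mu_2(\phi)$ recorded just above Lemma \ref{lem:estim_mu_2}. For $\lambda > \sup V - 1$, the condition $\mu_2(\phi) \geq \lambda$ is equivalent to
\[
\int (V + 1 - \lambda)|\phi|^2 + \frac{\bigl(\Re\pscal{\LAB\phi,\sigma\cdot(-i\nabla)\phi}\bigr)^2}{\pscal{(\lambda+1-V)\LAB\phi,\LAB\phi}} \geq 0,
\]
which I would unpack using the explicit form $\LAB\phi = (2-V)^{-1}\sigma\cdot(-i\nabla)\phi$. Setting $\alpha(\phi) := \int (2-V)^{-1}|\sigma\cdot\nabla\phi|^2$, the numerator of the fraction becomes exactly $\alpha(\phi)^2$, while the denominator equals $\beta_\lambda(\phi) := \int (\lambda+1-V)(2-V)^{-2}|\sigma\cdot\nabla\phi|^2$. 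I would take $\lambda = \lambda_0 := 1 - 2(1-\sqrt{1-\kappa^2})/(1+\sqrt{1-\kappa^2})$ and aim to verify the displayed inequality for every $\phi \in D(\LAB)$.

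The first step is to get rid of the awkward ratio. Since $\lambda_0 \leq 1$, one has $\lambda_0 + 1 - V \leq 2-V$ pointwise, hence $\beta_{\lambda_0}(\phi) \leq \alpha(\phi)$ and therefore $\alpha(\phi)^2/\beta_{\lambda_0}(\phi) \geq \alpha(\phi)$. The whole inequality then reduces to the Hardy-type bound
\[
\int V|\phi|^2 + \int \frac{|\sigma\cdot\nabla\phi|^2}{2-V} \geq -\bigl(1-\lambda_0\bigr)\|\phi\|^2 = -\frac{2\bigl(1-\sqrt{1-\kappa^2}\bigr)}{1+\sqrt{1-\kappa^2}}\|\phi\|^2.
\]

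For this final step I would use the pointwise lower bound $V(x) \geq -\kappa/|x|$ twice. On the one hand it yields $\int V|\phi|^2 \geq -\kappa \int |\phi|^2/|x|$; on the other hand it gives $(2-V)^{-1} \geq (2+\kappa/|x|)^{-1}$, so that $\alpha(\phi) \geq \int (2+\kappa/|x|)^{-1}|\sigma\cdot\nabla\phi|^2$. Inserting the Dolbeault--Esteban--Loss--Vega inequality \eqref{Hardy_DES} with $m = 2$ into the last integral, the two $\kappa\int|\phi|^2/|x|$ contributions cancel exactly and leave precisely the required remainder. Taking the infimum over $\phi \in D(\LAB)\setminus\{0\}$ then gives $m_2'' \geq \lambda_0$.

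I do not expect a serious obstacle: the lemma is essentially a rewriting of the Hardy inequality \eqref{Hardy_DES} in the atomic-balance variables, with the crucial algebraic point being that the operator $\LAB = (2-V)^{-1}\sigma\cdot(-i\nabla)$ is designed so that $\alpha(\phi)$ reproduces exactly the kinetic term appearing in \eqref{Hardy_DES}. The only mildly subtle point is ensuring that $\lambda_0 > \sup V - 1$ so that the iff characterization of $\{\mu_2(\phi) \geq \lambda_0\}$ is legitimate; this is automatic in the regime where the bound is nontrivial, and otherwise one may apply the same argument at any $\lambda$ slightly above $\sup V - 1$ and pass to the limit.
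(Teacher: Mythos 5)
Your proposal follows, in its computational core, exactly the paper's own proof: the same identification of the numerator with $\alpha(\phi)^2$ and of the denominator with $\beta_{\lambda}(\phi)$, the same reduction $\alpha^2/\beta_{\lambda_0}\geq\alpha$ coming from $\lambda_0\leq 1$, and the same double use of $V\geq-\kappa|x|^{-1}$ combined with the Hardy inequality \eqref{Hardy_DES} at $m=2$, with the two Coulomb terms cancelling. So the analytic content is right and is the intended one.

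The one place where you deviate, and where your argument as written has a gap, is the justification of the equivalence ``$\mu_2(\phi)\geq\lambda$ iff the displayed inequality holds at $\lambda$''. You restrict to $\lambda>\sup V-1$ and assert that $\lambda_0>\sup V-1$ is automatic whenever the bound matters. It is not: the standing hypotheses only give $\lambda_0\geq 1/3$ (for $\kappa<\sqrt3/2$) and $\sup V<2$, so one can perfectly well have $\sup V-1\geq\lambda_0$ (take $\sup V$ close to $2$ and $\kappa$ close to $\sqrt3/2$), and the conclusion $m_2''\geq\lambda_0\geq1/3$ is still needed in that regime (it is invoked in Lemma \ref{lem:calcul_m_2_AB}). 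Your fallback of running the argument at $\lambda$ slightly above $\sup V-1$ and passing to the limit does not close this, because your final estimate only bounds the relevant quantity from below by $(\lambda_0-\lambda)\norm{\phi}^2$, which is negative once $\lambda>\lambda_0$. The paper's fix is to use the $\phi$-dependent threshold
$\mu_c(\phi)=\pscal{(-1+V)\LAB\phi,\LAB\phi}\,\norm{\LAB\phi}^{-2}$
instead of the uniform one: the denominator $\beta_\mu(\phi)$ is positive for every $\mu>\mu_c(\phi)$; one always has $\mu_2(\phi)>\mu_c(\phi)$ because $\mu_c(\phi)$ is a diagonal entry of the $2\times2$ matrix and the off-diagonal entry $\alpha(\phi)$ is nonzero; and your chain of inequalities is valid on all of $(\mu_c(\phi),\lambda_0)$, forcing the unique root $\mu_2(\phi)$ to lie at or above $\lambda_0$ in every case. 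With that replacement your proof is complete.
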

\begin{remark}
We note that the right hand side of \eqref{estim_mu_2_kappa} is always $\geq1/3$ when $0\leq\kappa<\sqrt{3}/2$, and it converges to $1$ as $\kappa\to0$, as it should be. 
\end{remark}

\begin{proof}
The number $\mu_2(\phi)$ is the largest solution of the equation
\begin{equation}
\int_{\R^3}(1+V(x))|\phi(x)|^2+\frac{\left(\int_{\R^3}\frac{|\sigma\cdot\nabla\phi(x)|^2}{2-V(x)}dx\right)^2}{\int_{\R^3}\frac{(1+\mu-V(x))|\sigma\cdot\nabla\phi(x)|^2}{\left(2-V(x)\right)^2}dx}=\mu\int_{\R^{3}}|\phi(x)|^2dx.
\label{equation_mu_AB} 
\end{equation}
Clearly we must always have 
$$\mu_2(\phi)> \mu_c(\phi):=-1+\frac{\int_{\R^3}\frac{ V(x)}{(2-V(x))^2}|\sigma\cdot\nabla\phi(x)|^2dx}{\int_{\R^3}\frac{ |\sigma\cdot\nabla\phi(x)|^2}{(2-V(x))^2}dx}.$$
Let be $\mu_c(\phi)<\mu<1$. We estimate:
\begin{align}
& \int_{\R^3}(1+V(x)-\mu)|\phi(x)|^2dx+\frac{\left(\int_{\R^3}\frac{|\sigma\cdot\nabla\phi(x)|^2}{2-V(x)}dx\right)^2}{\int_{\R^3}\frac{(1+\mu-V(x))|\sigma\cdot\nabla\phi(x)|^2}{\left(2-V(x)\right)^2}dx}\nonumber\\
&\qquad\qquad\qquad\geq\int_{\R^3}(1+V(x)-\mu)|\phi(x)|^2dx+\int_{\R^3}\frac{|\sigma\cdot\nabla\phi(x)|^2}{2+\frac{\kappa}{|x|}}dx\nonumber\\
&\qquad\qquad\qquad\geq\left(1-2\frac{1-\sqrt{1-\kappa^2}}{1+\sqrt{1-\kappa^2}}-\mu\right)\int_{\R^3}|\phi(x)|^2dx\label{estim_below_Hardy}
\end{align}
where in the last line we have used \eqref{Hardy_DES} and the fact that $\kappa|x|^{-1}+V(x)\geq0$. From this we deduce that 
$$\mu_2(\phi)\geq \max\left(1-2\frac{1-\sqrt{1-\kappa^2}}{1+\sqrt{1-\kappa^2}}\;,\; \mu_c(\phi)\right).$$
This ends the proof of Lemma \ref{lem:estim_mu_2}
\end{proof}

The next step is to prove that property \eqref{property_P} is satisfied.
\begin{lemma}\label{lem:calcul_m_2_AB}
Property  \eqref{property_P} holds true for $b=\ii$: if $\{\phi_n\}\subset C^\ii_0(\R^3,\C^2)$ is such that $\phi_n\to0$ in $L^2$ and $\mu_2(\phi_n)\to\ell<\ii$, then $\int_{\R^3}V|\phi_n|^2\to0$.
\end{lemma}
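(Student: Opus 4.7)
The plan is to exploit the variational characterisation \eqref{equation_mu_AB} of $\mu_2(\phi_n)$ together with the Dolbeault--Esteban--Loss--Vega Hardy inequality \eqref{Hardy_DES}, which was the key ingredient of Lemma~\ref{lem:estim_mu_2}. Introduce the abbreviations
$$a_n:=\int\frac{|\sigma\cdot\nabla\phi_n|^2}{2-V},\qquad b_n:=\int\frac{(1+\mu_n-V)|\sigma\cdot\nabla\phi_n|^2}{(2-V)^2},\qquad c_n:=\|L\phi_n\|^2,$$
and note the algebraic identity $b_n=a_n+(\mu_n-1)c_n$ (obtained by writing $1+\mu_n-V=(2-V)+(\mu_n-1)$). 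Rewriting \eqref{equation_mu_AB} yields
$$\int V|\phi_n|^2 \;=\; (\mu_n-1)\|\phi_n\|^2 \;-\; \frac{a_n^2}{b_n}. \qquad (*)$$
Since $\mu_n$ is bounded and $\|\phi_n\|^2\to 0$, the first term on the right is $o(1)$; since $a_n^2/b_n\ge 0$, we immediately obtain $\limsup_n\int V|\phi_n|^2\le 0$. The whole problem thus reduces to showing $a_n^2/b_n\to 0$.

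For the matching lower bound I would use Hardy. Since $V\ge-\kappa/|x|$ pointwise, $(2-V)^{-1}\ge(2+\kappa/|x|)^{-1}$, and \eqref{Hardy_DES} at $m=2$ gives
$$\int V^-|\phi_n|^2 \;\le\; \kappa\int\frac{|\phi_n|^2}{|x|} \;\le\; a_n+2\tau\|\phi_n\|^2 \;=\; a_n+o(1),$$
where $\tau=(1-\sqrt{1-\kappa^2})/(1+\sqrt{1-\kappa^2})$. Inserting this into $(*)$ and splitting $V=V^+-V^-$ produces the master inequality
$$\int V^+|\phi_n|^2 + \frac{a_n^2}{b_n} \;\le\; a_n + o(1). \qquad (\star)$$
The elementary identity $a_n-a_n^2/b_n=(1-\mu_n)\,a_n c_n/b_n$ rearranges $(\star)$ into
$$\int V^+|\phi_n|^2 \;\le\; (1-\mu_n)\frac{a_n c_n}{b_n} + o(1);$$
in the regime $\ell<1$ both sides are nonnegative, which forces $\int V^+|\phi_n|^2\to 0$, and combined with the bound $a_n^2/b_n\ge a_n$ that holds in that regime, $(\star)$ also yields $c_n\to 0$ whenever $a_n$ stays bounded below. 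The regime $\ell\ge 1$ is handled in the same spirit after replacing the parameter $m=2$ in \eqref{Hardy_DES} by $m=1+\mu_n$.

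The main obstacle is to upgrade this to the sharp conclusion $a_n^2/b_n\to 0$ in every regime of $\ell$, which is what $(*)$ needs to deliver $\int V|\phi_n|^2\to 0$. I would argue by contradiction: if $a_n^2/b_n$ remains bounded below along some subsequence, the Hardy bound above forces $\int V^-|\phi_n|^2$ to stay bounded away from zero while $\|\phi_n\|_{L^2}\to 0$, so $\phi_n$ must concentrate at a singular point of $V$. An appropriate rescaling $\widetilde\phi_n(x)=\lambda_n^{3/2}\phi_n(\lambda_n x)$ with $\lambda_n\to\ii$ then extracts a normalised profile along which \eqref{Hardy_DES} is asymptotically saturated; the fact that $\mu_n$ remains bounded, combined with the $2\times 2$ matrix structure and the strict inequality $\kappa<\sqrt{3}/2$ (which makes $\tau<1/3$ so that a strictly positive gap persists in \eqref{Hardy_DES} for functions in $C^\infty_0(\R^3\setminus\{0\})$), yields the desired contradiction. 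Once $a_n^2/b_n\to 0$ is established, the identity $(*)$ gives $\int V|\phi_n|^2\to 0$ as claimed.
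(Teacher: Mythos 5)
Your algebraic reduction is sound as far as it goes: with $a_n$, $b_n$, $c_n$ as you define them, the identity $(*)$ follows from \eqref{equation_mu_AB}, the upper bound $\limsup_n\int V|\phi_n|^2\le0$ is immediate, and the lemma is indeed equivalent to $a_n^2/b_n\to0$. Two remarks on the intermediate steps, though. First, there is a sign slip: $a_n-a_n^2/b_n=(\mu_n-1)\,a_nc_n/b_n$, not $(1-\mu_n)\,a_nc_n/b_n$; with your sign, the assertion that ``both sides are nonnegative, which forces $\int V^+|\phi_n|^2\to0$'' is a non sequitur (a nonnegative quantity bounded by another nonnegative quantity need not vanish), whereas with the correct sign the right-hand side is $\le o(1)$ when $\ell<1$ and the conclusion does follow. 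Second, even in the regime $\ell<1$ your chain of inequalities only yields $a_n^2/b_n=a_n+o(1)$; it does not show $a_n\to0$, so $(*)$ still does not close.

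The genuine gap is therefore the final paragraph, which is where the entire content of the lemma sits, and the contradiction argument you sketch there is not a proof: nothing you write shows that $a_n^2/b_n\ge\delta>0$ forces a rescaled profile to ``asymptotically saturate'' \eqref{Hardy_DES}, and since \eqref{Hardy_DES} is sharp (equality holds for the ground-state upper spinor of the Coulomb--Dirac operator) there is no residual ``gap for functions in $C^\ii_0(\R^3\setminus\{0\})$'' of the kind you invoke. The missing idea is elementary and quantitative rather than a compactness argument: starting from \eqref{estim_mu_phi_main}, split the kinetic term as
$$\int_{\R^3}\frac{|\sigma\cdot\nabla\phi_n|^2}{2+\frac{\kappa}{|x|}}\,dx=(1-\kappa^2)\int_{\R^3}\frac{|\sigma\cdot\nabla\phi_n|^2}{2+\frac{\kappa}{|x|}}\,dx+\kappa\int_{\R^3}\frac{|\sigma\cdot\nabla\phi_n|^2}{\frac2\kappa+\frac{1}{|x|}}\,dx$$
and apply \eqref{Hardy_DES} with $m=2/\kappa$ and coupling constant $1$ to the \emph{second} piece only (this is \eqref{estim_below_Coulomb}). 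One then arrives at \eqref{estim_above_mu_phi}, whose left-hand side is a sum of the two nonnegative terms $\int(\frac{\kappa}{|x|}+V)|\phi_n|^2$ and $(1-\kappa^2)\int\frac{|\sigma\cdot\nabla\phi_n|^2}{2+\kappa/|x|}$ and whose right-hand side is $\mu_2(\phi_n)\norm{\phi_n}^2\to0$. Both terms must therefore vanish, and a second application of the same Hardy inequality converts $\int\frac{|\sigma\cdot\nabla\phi_n|^2}{2+\kappa/|x|}\to0$ into $\int|\phi_n|^2/|x|\to0$, whence $\int V|\phi_n|^2=\int(\frac{\kappa}{|x|}+V)|\phi_n|^2-\kappa\int\frac{|\phi_n|^2}{|x|}\to0$. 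Retaining the positive fraction $(1-\kappa^2)$ of the kinetic energy is exactly the quantitative leverage your sketch lacks; without it, or some equivalent device, the decisive step $a_n^2/b_n\to0$ is not established.
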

\begin{proof}
Note that necessarily $\ell\geq1/3$ by Lemma \ref{lem:estim_mu_2}, hence $\ell$ must be finite. We use the estimate \eqref{estim_below_Hardy}, with $\mu=\mu_2(\phi_n)$ to get
\begin{equation}
0\geq \int_{\R^3}(1+V(x)-\mu_2(\phi_n))|\phi_n(x)|^2dx+\int_{\R^3}\frac{|\sigma\cdot\nabla\phi_n(x)|^2}{2+\frac{\kappa}{|x|}}dx.
\label{estim_mu_phi_main} 
\end{equation}
Now we write
\begin{align}
\int_{\R^3}\frac{|\sigma\cdot\nabla\phi(x)|^2}{2+\frac{\kappa}{|x|}}dx&=(1-\kappa^2)\int_{\R^3}\frac{|\sigma\cdot\nabla\phi(x)|^2}{2+\frac{\kappa}{|x|}}dx+\kappa \int_{\R^3}\frac{|\sigma\cdot\nabla\phi(x)|^2}{\frac2\kappa+\frac{1}{|x|}}dx\nonumber\\
&\geq(1-\kappa^2)\int_{\R^3}\frac{|\sigma\cdot\nabla\phi(x)|^2}{2+\frac{\kappa}{|x|}}dx+\kappa\int_{\R^3}\frac{|\phi(x)|^2}{|x|}dx-\int_{\R^3}|\phi(x)|^2dx\label{estim_below_Coulomb}
\end{align}
where we have used \eqref{Hardy_DES} with $m\leftrightarrow2/\kappa$ and $\kappa\leftrightarrow1$. We deduce that
\begin{equation}
\int_{\R^3}\left(\frac{\kappa}{|x|}+V(x)\right)|\phi_n(x)|^2dx+(1-\kappa^2)\int_{\R^3}\frac{|\sigma\cdot\nabla\phi_n(x)|^2}{2+\frac{\kappa}{|x|}}dx\leq \mu_2(\phi_n)\int_{\R^3}|\phi_n|^2.
\label{estim_above_mu_phi} 
\end{equation}
Using that $\mu_2(\phi_n)\to\ell$, that $V\geq-\kappa|x|^{-1}$ and $\phi_n\to0$ we deduce that 
$$\lim_{n\to\ii}\int_{\R^3}\frac{|\sigma\cdot\nabla\phi_n(x)|^2}{2+\frac{\kappa}{|x|}}dx=0.$$
Using again \eqref{estim_below_Coulomb} with $\phi=\phi_n$ we finally get the result.
\end{proof}

We will now prove the following 
\begin{lemma}
We have $m_2'\geq1$ where $m_2'$ was defined in \eqref{def_m22}.
\end{lemma}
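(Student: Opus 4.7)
The plan is to argue by contradiction: assume a sequence $\{\phi_n\}\subset D(\LAB)$ with $\norm{\phi_n}=1$, $\phi_n\wto 0$ weakly in $L^2(\R^3,\C^2)$ and $\mu_2(\phi_n)\to\ell<1$. The identity \eqref{equation_mu_AB} defining $\mu_2(\phi_n)$ has a non-negative second summand, so
\[\mu_2(\phi_n)\;\geq\;1+\int_{\R^3}V|\phi_n|^2\,dx,\]
and the entire task reduces to showing $\int V|\phi_n|^2\to 0$, which would contradict $\ell<1$.

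First, I would exploit the a priori bound \eqref{estim_above_mu_phi} already used in the proof of Lemma \ref{lem:calcul_m_2_AB}: since $\mu_2(\phi_n)$ is bounded and $\norm{\phi_n}=1$, both non-negative terms $\int|\sigma\cdot\nabla\phi_n|^2/(2+\kappa/|x|)\,dx$ and $\int(V+\kappa/|x|)|\phi_n|^2\,dx$ are uniformly bounded. The weighted gradient bound makes $\{\phi_n\}$ bounded in $H^1(\{|x|>\epsilon\})$ for every $\epsilon>0$, hence by Rellich compactness together with $\phi_n\wto 0$ in $L^2$, one gets $\phi_n\to 0$ strongly in $L^q_{\rm loc}(\R^3\setminus\{0\})$ for every $2\leq q<6$. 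The Hardy-type inequality \eqref{Hardy_DES} also forces $\int|\phi_n|^2/|x|\,dx\leq C$, from which one reads off $\int_{|x|<\epsilon}|\phi_n|^2\leq C\epsilon$ uniformly in $n$, so the mass of $|\phi_n|^2$ fully escapes to infinity: $\int_{|x|<R}|\phi_n|^2\to 0$ as $n\to\infty$ for every fixed $R$.

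Next, I would split $V=V^+-V^-$ with $V^+\in L^p$, $p>3$, and $0\leq V^-\leq\kappa/|x|$. Given $\eta>0$, pick $R$ large enough that $|V|\leq\eta$ on $\{|x|>R\}$ (using $V\to 0$ at infinity), so that the far-field piece of $\int V|\phi_n|^2$ is at most $\eta$; on the annulus $\{\epsilon<|x|<R\}$, combining $V\in L^p+L^\infty$ and strong $L^q_{\rm loc}$ convergence makes the middle piece go to $0$ as $n\to\infty$; on $\{|x|<\epsilon\}$, the $V^+$-part is controlled by H\"older using $\norm{V^+\chi_{|x|<\epsilon}}_{L^p}$, small by absolute continuity of the $L^p$-norm, against $\norm{\phi_n}_{L^{2p/(p-1)}(B_\epsilon)}$, while the $V^-$-part is bounded by $\kappa\int_{|x|<\epsilon}|\phi_n|^2/|x|\,dx$.

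The main obstacle is the uniform smallness of $\int_{|x|<\epsilon}|\phi_n|^2/|x|\,dx$ as $\epsilon\to 0$, since our bounds only guarantee boundedness. The remedy is to apply \eqref{Hardy_DES} to the localization $\chi_\epsilon\phi_n$ with a smooth cut-off at scale $\epsilon$, explicitly tracking the $\epsilon$-dependence of the cross-term errors; combined with $\int_{|x|<\epsilon}|\phi_n|^2\leq C\epsilon$ and the strict inequality $\kappa<\sqrt 3/2$ (which leaves slack in \eqref{Hardy_DES}), this yields the desired quantitative smallness and closes the argument.
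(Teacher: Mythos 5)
Your reduction of the lemma to the claim ``$\int_{\R^3}V|\phi_n|^2\to0$'' is where the argument breaks down. Dropping the positive second term of \eqref{equation_mu_AB} discards precisely the quantity that compensates the Coulomb attraction near the origin, and the resulting statement is not provable from the bounds you have at your disposal. Indeed, all the a priori information you extract (boundedness of $\int|\sigma\cdot\nabla\phi_n|^2(2+\kappa/|x|)^{-1}$, of $\int|\phi_n|^2/|x|$, the bound $\int_{|x|<s}|\phi_n|^2\leq Cs$, $\phi_n\wto0$ and strong local convergence away from $0$) is compatible with $\int_{|x|<\epsilon_n}|\phi_n|^2/|x|$ staying bounded away from zero: take $v_n(x)=c\,h(|x|/\epsilon_n)\,|x|^{-1}$ with $h\in C^\ii_0((1/2,1))$ and $\epsilon_n\to0$, completed by a bump escaping to infinity so as to normalize. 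Then $\norm{v_n}^2\sim\epsilon_n\to0$, $\int_{|x|<s}|v_n|^2\leq Cs$, $\int|\nabla v_n|^2(2+\kappa/|x|)^{-1}=O(1)$, yet $\int|v_n|^2/|x|$ equals a fixed positive constant carried entirely by $B(0,\epsilon_n)$; for $V=-\kappa/|x|$ one gets $\int V|\phi_n|^2\not\to0$. Consequently your step (5) cannot be repaired by bookkeeping: applying \eqref{Hardy_DES} to $\chi_\epsilon\phi_n$ yields a right-hand side of order one, because $|\nabla\chi_\epsilon|^2\sim\epsilon^{-2}$ is integrated against a weight of size $\epsilon$ over a region of mass $O(\epsilon)$, and the localized kinetic term $\int\chi_\epsilon^2|\sigma\cdot\nabla\phi_n|^2(2+\kappa/|x|)^{-1}$ is only known to be bounded, not small. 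The slack $\kappa<\sqrt3/2$ improves the constant in \eqref{Hardy_DES} but does not make these terms vanish.

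The mechanism that actually works near the origin is a lower bound on the localized \emph{energy}, not smallness of the localized potential term. The paper's proof introduces an IMS partition $\xi_0^2+\xi_1^2+\xi_2^2=1$ at scales $r$ (small) and $R$ (large), and shows via \eqref{Hardy_DES}, using $\kappa+\epsilon<\sqrt3/2$ and $r$ small, that $\int(1+V-\mu_2(\phi_n))|\phi_n^0|^2+\int|\sigma\cdot\nabla\phi_n^0|^2(2+\kappa/|x|)^{-1}\geq\frac{1-\ell}{3}\norm{\phi_n^0}^2$; that is, the Coulomb attraction is absorbed by the localized kinetic term rather than shown to be negligible. Combined with the analogous estimate at infinity and the strong local convergence on the intermediate annulus, this forces $\norm{\phi_n}\to0$, the desired contradiction. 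Your treatment of the far field and of the middle annulus is fine and matches the paper's; it is the singular region that must be handled by this energy lower bound rather than by proving decay of $\int_{|x|<\epsilon}|\phi_n|^2/|x|$.
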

\begin{proof}
Consider a sequence $\{\phi_n\}\subset C_0^\ii(\R^3,\C^2)$ such that $\norm{\phi_n}=1$ and $\phi_n\wto0$. We will argue by contradiction and suppose that, up to a subsequence, $\mu_2(\phi_n)\to\ell\in[1/3,1)$. Similarly as in the proof of Lemma \ref{lem:calcul_m_2_AB}, $\{\phi_n\}$ must satisfy \eqref{estim_above_mu_phi}, from which we infer that
$$\int_{\R^3}\frac{|\sigma\cdot\nabla\phi_n(x)|^2}{2+\frac{\kappa}{|x|}}dx\leq C,$$
hence $\{\phi_n\}$ is bounded in $H^1$. Therefore, up to a subsequence we may assume that $\phi_n\to0$ strongly in $L^p_{\rm loc}(\R^3)$ for $2\leq p<6$. 

Let us now fix a smooth partition of unity $\xi_0^2+\xi_1^2+\xi_2^2=1$ where each $\xi_i$ is $\geq0$, $\xi_0\equiv1$ on the ball $B(0,r)$ and $\xi_0\equiv0$ outside the ball $B(0,2r)$, $\xi_2\equiv1$ outside the ball $B(0,2R)$ and $\xi_2\equiv0$ in the ball $B(0,R)$. We fix $R$ large enough such that
$$\forall|x|\geq R,\qquad |V(x)|\leq \frac{1-\ell}{3}$$
and $r$ small enough such that
$$m-\epsilon\leq\frac{\epsilon}{2r}$$
where $\epsilon$ is a fixed constant chosen such that $1-\ell-\epsilon/3>(1-\ell)/3$ and $\kappa+\epsilon<\sqrt{3}/2$.

Next we use the (pointwise) IMS formula
$$|\nabla\phi(x)|^2=\sum_{i=0}^2|\nabla(\xi_i\phi)(x)|^2-|\phi(x)|^2\sum_{i=0}^2|\nabla\xi_i(x)|^2$$
and \eqref{estim_mu_phi_main} to infer, denoting $\phi_n^i:=\phi_n\xi_i$ and $\eta=\sum_{i=0}^2|\nabla\xi_i(x)|^2$,
\begin{multline}
\sum_{i=0}^2\left(\int_{\R^3}(1+V(x)-\mu_2(\phi_n))|\phi^i_n(x)|^2dx+\int_{\R^3}\frac{|\sigma\cdot\nabla\phi^i_n(x)|^2}{2+\frac{\kappa}{|x|}}dx\right)\\
\leq \int_{\R^3}\frac{\eta(x)|\phi_n(x)|^2}{2+\frac{\kappa}{|x|}}dx.\label{estim_all}
\end{multline}
Next we note that for $n$ large enough, by our definition of $R$,
\begin{equation}
\int_{\R^3}(1+V(x)-\mu_2(\phi_n))|\phi^2_n(x)|^2dx\geq \frac{1-\ell}{3}\norm{\phi_n^2}^2.
\label{estim_infini} 
\end{equation}
Similarly we have by definition of $r$ and $\epsilon$ (using that $\phi^0_n$ has its support in the ball $B(0,2r)$)
$$\int_{\R^3}\frac{|\sigma\cdot\nabla\phi^0_n(x)|^2}{2+\frac{\kappa}{|x|}}dx\geq \int_{\R^3}\frac{|\sigma\cdot\nabla\phi^0_n(x)|^2}{\epsilon+\frac{\kappa+\epsilon}{|x|}}dx\geq \kappa\int_{\R^3}\frac{|\phi_n^0(x)|^2}{|x|}-\frac{\epsilon}{3}\int_{\R^3}|\phi_n^0(x)|^2dx$$
where for the last inequality we have used \eqref{Hardy_DES} and $\kappa+\epsilon<\sqrt{3}/2$. Using again that $V\geq-\kappa|x|^{-1}$, we infer the lower bound, for $n$ large enough,
\begin{equation}
\int_{\R^3}(1+V(x)-\mu_2(\phi_n))|\phi^0_n(x)|^2dx+\int_{\R^3}\frac{|\sigma\cdot\nabla\phi^0_n(x)|^2}{2+\frac{\kappa}{|x|}}dx
\geq\frac{1-\ell}{3}\norm{\phi_n^0}^2.\label{estim_zero}
\end{equation}
Inserting \eqref{estim_infini} and \eqref{estim_zero} in \eqref{estim_all}, we obtain
$$\frac{1-\ell}{3}\left(\norm{\phi_n^2}^2+\norm{\phi_n^0}^2\right)\leq \int_{\R^3}\frac{\eta(x)|\phi_n(x)|^2}{2+\frac{\kappa}{|x|}}dx+\norm{V\1_{r\leq|x|\leq 2R}}_{L^\ii}\norm{\phi_n\1_{r\leq|x|\leq 2R}}_{L^2}^2.$$
Using the strong local convergence of $\phi_n$, we finally deduce that $\lim_{n\to\ii}\norm{\phi_n^2}=\lim_{n\to\ii}\norm{\phi_n^0}=0$ which is a contradiction with $\norm{\phi_n}=1$.
\end{proof}

The conclusion follows from Theorem \ref{thm:pollution_L_necessary} \textbf{\textit{(ii)}}. This ends the proof of Theorem \ref{thm:atomic_balance}.\hfill$\square$
\end{proof}

\subsubsection{Dual Kinetic Balance}\label{sec:dual_kinetic_balance}
Let us now study the method which was introduced in \cite{Shaetal-04}, based this time on the splitting of the Hilbert space induced by the projector $\cP_\epsilon$ defined in \eqref{def_P_epsilon}. We have seen in Theorem \ref{thm:no_poll_dual_basis} that pollution might occur when $\epsilon$ is not small enough.  We prove below that introducing a balance as proposed in \cite{Shaetal-04} does not in general decrease the polluted spectrum.

Let us introduce the following operator
$$J\left(\begin{array}{c}
\phi\\0
\end{array}\right)=\left(\begin{array}{c}
0\\ \phi
\end{array}\right)$$
defined on $\cP L^2(\R^3,\C^4)$ with values in $(1-\cP) L^2(\R^3,\C^4)$. Next we introduce the following balance operator \cite{Shaetal-04}
\begin{equation}
\boxed{L_{DKB}=U_\epsilon J U_\epsilon} 
\label{def_LDKB}
\end{equation}
which is an isometry defined on $\cP_\epsilon L^2(\R^3,\C^4)$ with values in $(1-\cP_\epsilon) L^2(\R^3,\C^4)$. A calculation shows that, like in \cite{Shaetal-04}, formulas (24) and (25),
$$\LDKB\left(\begin{array}{c}
\phi\\ \epsilon\sigma(-i\nabla)\phi
\end{array}\right)=\left(\begin{array}{c}
\epsilon\sigma(-i\nabla)\phi\\ -\phi
\end{array}\right).$$
As before we may define $\LDKB$ on $\cC=U_\epsilon C^\ii_0(\R^3,\C^4)$. 

\begin{theorem}[Dual Kinetic Balance]\label{thm:no_poll_dual_kinetic_basis}
Assume that the real function $V$ satisfies the same assumptions as in Theorem \ref{thm:upper_lower_spinors}. Assume also that $\cP_\epsilon$ and $\LDKB$ are defined as in \eqref{def_P_epsilon} and \eqref{def_LDKB} for some $0<\epsilon\leq1$. Then one has
\begin{multline*}
\overline{\Spu(D^0+V,\cP_\epsilon,\LDKB)}=\overline{\Spu(D^0+V,\cP_\epsilon)}\\
=\left[-1\; ,\; \min\left\{-\frac2\epsilon+1+\sup V\;,\;1\right\}\right]\cup\left[\max\left\{-1\;,\;\frac2\epsilon-1+\inf V\right\}\; ,\; 1\right].
\label{Spu_P_epsilon_bis}
\end{multline*}
\end{theorem}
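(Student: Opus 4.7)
The plan is to prove the two inclusions separately. The inclusion $\overline{\Spu(D^0+V,\cP_\epsilon,\LDKB)}\subseteq\overline{\Spu(D^0+V,\cP_\epsilon)}$ is immediate from the definitions, since any balanced test space $V_n^+\oplus\LDKB V_n^+$ is a fortiori a $\cP_\epsilon$-adapted splitting $V_n^+\oplus V_n^-$ with $V_n^-\subseteq(1-\cP_\epsilon)\gH$. For the substantive converse, Theorem~\ref{thm:no_poll_dual_basis} has already identified the right-hand side as the explicit union of intervals appearing in the statement, so it suffices to apply the sufficient condition of Theorem~\ref{thm:pollution_L_sufficient} by producing $(\cP_\epsilon,\LDKB)$-spurious sequences accumulating at every point of this union.

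My first step would be to conjugate by $U_\epsilon$ in order to reach a transparent $2\times 2$ matrix. Writing $\hat A:=U_\epsilon(D^0+V)U_\epsilon=\begin{pmatrix}A_1 & B\\ B^* & A_2\end{pmatrix}$ in the splitting $L^2(\R^3,\C^4)=\cP L^2\oplus(1-\cP)L^2$, with $A_1,A_2$ as in~\eqref{def_op_DKB1}--\eqref{def_op_DKB2}, a direct Fourier computation yields the off-diagonal block $B=m_\epsilon(|p|)\sigma\cdot p+\epsilon W^{-1}[V,\sigma\cdot p]W^{-1}$ with $m_\epsilon(|k|)=(\epsilon^2|k|^2-1+2\epsilon)/(1+\epsilon^2|k|^2)$ and $W=\sqrt{1+\epsilon^2|p|^2}$. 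Since $\LDKB=U_\epsilon J U_\epsilon$ with $J(\psi,0)^T=(0,\psi)^T$, for any $\psi\in C^\infty_0(\R^3,\C^2)$ the vector $\phi:=U_\epsilon(\psi,0)^T$ satisfies $\phi\perp\LDKB\phi$ and $\norm{\phi}=\norm{\LDKB\phi}=\norm{\psi}$, so the $2\times 2$ matrix of $D^0+V$ in $\{\phi,\LDKB\phi\}$ coincides with that of $\hat A$ in $\{(\psi,0)^T,(0,\psi)^T\}$, whose diagonal entries are $\langle A_i\psi,\psi\rangle/\norm{\psi}^2$ and whose off-diagonal is $\langle B\psi,\psi\rangle/\norm{\psi}^2$.

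I would test with two radial families $\psi_n=\zeta_n e_1\in C^\infty_0(\R^3,\C^2)$, $e_1=(1,0)^T$: a spread-out low-momentum sequence $\zeta_n(x)=n^{-3/2}\zeta(x/n)$ and, for each Lebesgue point $x_0$ of $V$ chosen away from its Coulomb singularities, a concentrated high-momentum sequence $\zeta_n(x)=n^{3/2}\zeta(n(x-x_0))$. Both obey $\psi_n\rightharpoonup 0$ in $L^2$, and by the unitarity of $U_\epsilon$ this propagates to the weak-convergence hypotheses of Theorem~\ref{thm:pollution_L_sufficient}. Arguments parallel to those in the proof of Lemma~\ref{lem:prop_DKB} give the diagonal limits: $\langle A_1\psi_n,\psi_n\rangle/\norm{\psi_n}^2\to 1$ and $\langle A_2\psi_n,\psi_n\rangle/\norm{\psi_n}^2\to -1$ in the low-momentum case (using $\cL_\epsilon\psi_n\to 0$ strongly and $V\psi_n\to 0$ since $V$ decays at infinity), and $\langle A_1\psi_n,\psi_n\rangle/\norm{\psi_n}^2\to 2/\epsilon-1+V(x_0)$, $\langle A_2\psi_n,\psi_n\rangle/\norm{\psi_n}^2\to -2/\epsilon+1+V(x_0)$ in the high-momentum case (using $\norm{\cL_\epsilon\psi_n}\to\norm{\psi_n}$, $W^{-1}\psi_n\to 0$ in $L^2$, and the Lebesgue-point identity $\int V|\cL_\epsilon\psi_n|^2\to V(x_0)\norm{\psi_n}^2$).

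The crucial step, and the main obstacle I expect, is to show that $\langle B\psi_n,\psi_n\rangle/\norm{\psi_n}^2\to 0$ in both cases. The kinetic part vanishes \emph{identically} for every $n$ by radial symmetry: since $e_1^*(\sigma\cdot k)e_1=k_3$ and $|\hat\zeta_n|^2$ is radial, the expectation reduces to $\int m_\epsilon(|k|)\,k_3\,|\hat\zeta_n(k)|^2\,dk=0$ by odd symmetry in $k_3$. The potential part is estimated via $|\langle\epsilon W^{-1}[V,\sigma\cdot p]W^{-1}\psi_n,\psi_n\rangle|\leq 2\epsilon\norm{\sigma\cdot p\,W^{-1}\psi_n}\cdot\norm{VW^{-1}\psi_n}$, where $\norm{\sigma\cdot p\,W^{-1}\psi_n}\leq\norm{\psi_n}/\epsilon$ and is itself $O(1/n)$ in the low-momentum case. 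It remains to show $\norm{VW^{-1}\psi_n}\to 0$; this is routine for bounded and $L^p$ components of $V$ via $W^{-1}\psi_n\to 0$ in $L^2$ (and in $L^q$, $q<6$, by Sobolev), but the Coulomb pieces $V_1=-\kappa_k/|x-R_k|$ require a non-local estimate: the explicit high-momentum decay $|W^{-1}\psi_n(x)|\lesssim\epsilon^{-1}n^{-3/2}|x-x_0|^{-2}$ (obtained by evaluating the Fourier integral) together with local integrability of $|x-R_k|^{-2}|x-x_0|^{-4}$ for $R_k\neq x_0$ yields $\int V_1^2|W^{-1}\psi_n|^2\lesssim n^{-3}\to 0$. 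Once this is established, the limiting $2\times 2$ matrices are diagonal with the computed eigenvalues; since $2/\epsilon-1\geq 1$ for $0<\epsilon\leq 1$, the upper eigenvalue is $\mu_2$ and the lower one is $\mu_1$. As $x_0$ varies over Lebesgue points of $V$, the number $V(x_0)$ sweeps the essential range of $V$ and approaches $\inf V$ and $\sup V$; combining this with the values $\pm 1$ obtained from the low-momentum sequence and the convexity of the sets $K_1,K_2$ established in the proof of Theorem~\ref{thm:pollution_L_sufficient} gives $[\min\{1,2/\epsilon-1+\inf V\},\max\{1,2/\epsilon-1+\sup V\}]\subseteq[m_2,M_2]$ and the symmetric inclusion for $[m_1,M_1]$. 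Since $\hat\sigma_{\rm ess}(D^0+V)=(-\infty,-1]\cup[1,\infty)$ and the spurious spectrum is contained in $(-1,1)$, intersecting with $(-1,1)$ and taking closures produces exactly the two intervals of the statement, which completes the proof.
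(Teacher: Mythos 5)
Your proof is correct and follows essentially the same route as the paper: the trivial inclusion $\Spu(A,\cP_\epsilon,\LDKB)\subseteq\Spu(A,\cP_\epsilon)$ combined with Theorem \ref{thm:no_poll_dual_basis} for the upper bound, and, for the lower bound, concentrated test vectors $U_\epsilon(\psi_n,0)$ at Lebesgue points with the radial-symmetry cancellation of the off-diagonal kinetic term, fed into Theorem \ref{thm:pollution_L_sufficient}. Your handling of the commutator term near the Coulomb singularities is more detailed than the paper's (which dismisses the limit as easy to see), and your additional low-momentum family is redundant since $2/\epsilon-1\geq 1$ already reaches the edge of the gap, but neither point affects correctness.
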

\begin{proof}
We will use Theorem \ref{thm:pollution_L_sufficient}. Consider a radial function $\zeta\in C_0^\ii(\R^3,\R)$ and introduce the following functions: $\phi_1:=(\zeta,0)$ and $\phi_1':=(\sigma\cdot p)/|p|\phi_1\in\cap_{s>0}H^s(\R^3,\C^2)$. We define similarly as in the proof of Theorem \ref{thm:no_poll_dual_basis}, $\phi_n(x)=n^{3/2}\phi_1(n(x-x_0))$ and $\phi'_n(x)=n^{3/2}\phi'_1(n(x-x_0))$, where $x_0$ is a fixed Lebesgue point of $V$. We note that $\phi_n':=(\sigma\cdot p)/|p|\phi_n$. Also, using that $\widehat\zeta$ is radial, we get for any real function $f$:
\begin{equation}
\pscal{f(|p|)\phi_n,\phi_n'}=\pscal{f(n|p|)\phi_1,\phi_1'}=\int_{S^2}\omega_1d\omega\int_0^\ii|\widehat{\zeta}(|p|)|^2f(n|p|)|p|^2d|p|=0.
\label{prop_radial} 
\end{equation}

A simple calculation shows that the $2\times2$ matrix of $D^0+V$ in the basis $(U_\epsilon(\phi_n,0)\;,\;\LDKB U_\epsilon(\phi_n,0))$ reads
$$M_n=\left(\begin{matrix}
\pscal{A_{11}\phi_n,\phi_n}&  \pscal{A_{12}\phi_n,\phi_n}\\
\pscal{A_{21}\phi_n,\phi_n}&  \pscal{A_{22}\phi_n,\phi_n}\\
\end{matrix}\right)$$
where 
\begin{equation*}
A_{11}= 1+\frac{1}{\sqrt{1+\epsilon^2|p|^2}}V\frac{1}{\sqrt{1+\epsilon^2|p|^2}}
+\frac{\epsilon\sigma\cdot p}{\sqrt{1+\epsilon^2|p|^2}}\left(\frac2\epsilon-2+V\right)\frac{\epsilon\sigma\cdot p}{\sqrt{1+\epsilon^2|p|^2}},
\end{equation*}
\begin{equation*}
A_{22}= -1+\frac{1}{\sqrt{1+\epsilon^2|p|^2}}V\frac{1}{\sqrt{1+\epsilon^2|p|^2}}
+\frac{\epsilon\sigma\cdot p}{\sqrt{1+\epsilon^2|p|^2}}\left(-\frac2\epsilon+2+V\right)\frac{\epsilon\sigma\cdot p}{\sqrt{1+\epsilon^2|p|^2}},
\end{equation*}
\begin{equation*}
A_{12}=(A_{21})^*= \frac{2\epsilon-1+\epsilon^2|p|^2}{1+\epsilon^2|p|^2}(\sigma\cdot p)+\epsilon\frac{1}{\sqrt{1+\epsilon^2|p|^2}}[V,\sigma\cdot p]\frac{1}{\sqrt{1+\epsilon^2|p|^2}}.
\end{equation*}
We infer from \eqref{prop_radial} that
$$\pscal{\frac{2\epsilon-1+\epsilon^2|p|^2}{1+\epsilon^2|p|^2}(\sigma\cdot p)\phi_n,\phi_n}=0$$
for every $n$. Also we have 
$$\lim_{n\to\ii}\norm{\frac{\epsilon\sigma\cdot p}{\sqrt{1+\epsilon^2|p|^2}}\phi_n-\phi'_n}_{H^1}=0.$$
It is then easy to see that 
$$\lim_{n\to\ii}M_n= \left(\begin{matrix}
\frac{2}{\epsilon}-1+V(x_0) & 0\\
0 & -\frac{2}{\epsilon}+1+V(x_0)\\
\end{matrix}\right).$$
Note that $\LDKB(\phi_n,0)\wto0$ since $\LDKB$ is an isometry. The result follows from Theorem \ref{thm:pollution_L_sufficient}, by varying $x_0$. 
\end{proof}


\end{document}